\numberwithin{equation}{section}
\newtheorem{theorem}{Theorem}[section]
\newtheorem{lemma}[theorem]{Lemma}
\newtheorem{proposition}[theorem]{Proposition}
\newtheorem{corollary}[theorem]{Corollary}
\theoremstyle{definition}
\newtheorem{definition}[theorem]{Definition}
\newtheorem{remark}[theorem]{Remark}
\newtheorem{example}[theorem]{Example}
\begin{document}

\title{Hadamard Products and Binomial Ideals}
\thanks{Version: November 24, 2022}

\author[B.~Atar et al.]{B\"u\c{s}ra Atar}
\author[]{Kieran Bhaskara}
\author[]{Adrian Cook}
\author[]{Sergio Da Silva}
\author[]{Megumi Harada}
\author[]{Jenna Rajchgot}
\author[]{Adam Van Tuyl}
\author[]{Runyue Wang} 
\author[]{Jay Yang}
\address{Department of Mathematics \& Statistics, McMaster University, Hamilton, ON L8S 4L8, Canada}
\email{atarb@mcmaster.ca,~kieran.bhaskara@mcmaster.ca,~cooka11@mcmaster.ca,\newline Megumi.Harada@math.mcmaster.ca,~rajchgoj@mcmaster.ca,~vantuyl@math.mcmaster.ca,\newline wangr109@mcmaster.ca}
\address{Department of Mathematics and Economics, Virginia State University, Petersburg, VA 23806, USA}
\email{sdasilva@vsu.edu}
\address{Department of Mathematics, Washington University at St. Louis, St. Louis, MO 63130, USA}
\email{jayy@wustl.edu}

\date{\today}

\begin{abstract}
We study the Hadamard product of 
two varieties $V$ and $W$, with particular attention to the situation when one or both of $V$ and $W$ is a  
binomial variety.  The main result of this paper shows that when $V$ and $W$ are both binomial varieties, and the binomials
that define $V$ and $W$ have the same binomial
exponents, then the defining equations of $V \star W$ can be computed explicitly and directly from the defining equations of
$V$ and $W$.  This result recovers known 
results about Hadamard products of binomial hypersurfaces and toric varieties. 
Moreover, as an application of our main result,
we describe a relationship between
the Hadamard product of the toric
ideal $I_G$ of a graph $G$ and the toric ideal $I_H$
of a subgraph $H$ of $G$.
We also derive results about algebraic invariants of Hadamard products: assuming $V$ and $W$ are binomial with the same exponents, we show that $\deg(V\star W) = \deg(V)=\deg(W)$ and $\dim(V\star W) = \dim(V)=\dim(W)$. 
Finally, given any (not necessarily binomial) projective variety $V$ and a point $p \in \mathbb{P}^n \setminus \mathbb{V}(x_0x_1\cdots x_n)$,
subject to some additional minor hypotheses, we find an explicit binomial variety that describes all the points $q$
that satisfy $p \star V = q\star V$.
\end{abstract}

\keywords{Hadamard products, binomial ideals}

\subjclass[2000]{13F65, 14N05, 14M99}
\maketitle

\section{Introduction}

Let $V$ and $W$ be projective varieties in $\mathbb{P}^n$
over an algebraically closed field $k$ of characteristic zero.   The last decade
has seen increasing interest in, and development of, the theory of the \emph{Hadamard product} 
$V \star W$ of $V$ and $W$.  
Informally (for formal definitions see Section~\ref{sec:background}), the
Hadamard product $V \star W$ is the Zariski closure of
the rational map $\varphi:V \times W \rightarrow \mathbb{P}^n$
given by
\begin{equation}\label{map}
([p_0:\cdots:p_n],[q_0:\cdots:q_n]) \mapsto [p_0q_0:\cdots:
p_nq_n].
\end{equation}
The notion of a Hadamard product of varieties first appeared
in  the work of Cueto, Morton, and Sturmfels \cite{CMS} and
Cueto, Tobis, and Yu \cite{CTY}, where the Hadamard product appears in their study of Boltzman
machines and statistical models.  Hadamard products have also  
appeared in the theory of tropical geometry (cf. \cite[Proposition 5.5.11]{MS}).

A systematic study of Hadamard products was initiated in 2016 by Bocci, Carlini, and Kileel \cite{BCK} from the point of view of algebraic
geometry and commutative algebra.  In the years following, several groups of researchers 
\cite{BJBGM,BCFL,BCFL2,CCFL,FOW} have continued this line of investigation.  In another direction, some recent research has focused instead on the question of determining classes of algebraic varieties -- such as star configurations -- which can be realized as the Hadamard product of two varieties \cite{BJC,BC,BCC,CCGV,CCFG}.

In this paper we focus primarily on the study of the Hadamard product $V \star W$ under the extra hypothesis that one or both of 
$V$ and $W$ are binomial varieties; that is, the defining 
ideals $\mathbb{I}(V)$ and $\mathbb{I}(W)$ are generated by binomials.\footnote{In this paper,
we define a binomial to be a polynomial with \emph{exactly} two terms (see Remark
\ref{binomialremark} for more details).} Binomial ideals
are ubiquitous in algebraic geometry and commutative algebra;  toric 
geometry and algebraic statistics are just two examples of research areas in which they 
appear.   We point the readers to Herzog, Hibi and Ohsugi's text 
\cite{HHO} and Eisenbud and Sturmfels's 
influential paper \cite{ES} for more history and context on the topic of binomial ideals and binomial varieties. Given the importance of binomial varieties, it is of interest 
to understand how the Hadamard product behaves with respect to this family.  
Some results in this direction are already known.  For example,
Friedenberg, Oneto, and Williams
\cite{FOW} have shown that the Hadamard product $\star$ is idempotent when restricted to the class of toric varieties, i.e., if $V$ is a toric variety (and hence a binomial variety), then $V \star V = V$. Moreover, Bocci
and Carlini \cite{BC} have recently shown that, under certain hypotheses, if  
$V$ and $W$ are binomial hypersurfaces, then $V \star W$ is also
a binomial hypersurface. The results of this manuscript will recover both of the above statements as corollaries. 

Our main contribution, Theorem~\ref{maintheorem} below,  significantly simplifies the computation of the Hadamard product for certain pairs of binomial varieties, as we now describe.  We say that two
binomials, written as $ax^{\alpha_0}_0x_1^{\alpha_1}\cdots x_n^{\alpha_n} -
bx_0^{\beta_0}x_1^{\beta_1}\cdots x_n^{\beta_n}$ and
$cx_0^{\gamma_0}x_1^{\gamma_1} \cdots x_n^{\gamma_n} - dx_0^{\delta_0}
x_1^{\delta_1}\cdots x_n^{\delta_n}$, have the \emph{same binomial
exponents} if 
$$(\alpha_0,\ldots,\alpha_n) = (\gamma_0,\ldots,\gamma_n)
~~\mbox{and}~~ (\beta_0,\ldots,\beta_n) = (\delta_0,\ldots,\delta_n).$$
We additionally say that binomial varieties $V$ and $W$ have the same binomial exponents
if there are generating sets of $\mathbb{I}(V)$ and $\mathbb{I}(W)$ which have the property that
each generator of $\mathbb{I}(V)$ has the same binomial exponents 
as some generator of $\mathbb{I}(W)$, and 
vice versa. (For a precise formulation, see Definition~\ref{definition: same exponents}.)  Our main result is the following.

\begin{theorem}[Corollary \ref{productandpoint}]\label{maintheorem}
Let $V$ and $W$ be binomial varieties of $\mathbb{P}^n$ that
have the same binomial exponents.  Let $V' \subseteq V$ be any
subvariety.  If there exists $p = [p_0:\cdots:p_n] \in V'$ with
$p_0\cdots p_n \neq 0$, then 
$p \star W = V' \star W = V \star W.$
\end{theorem}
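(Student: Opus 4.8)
The plan is to prove the two inclusions $p\star W\subseteq V'\star W\subseteq V\star W$ and $V\star W\subseteq p\star W$; the resulting cycle forces all three varieties to coincide. The first chain is immediate: since $p\in V'\subseteq V$, and since the Hadamard product $U\star W$ is by definition the Zariski closure of $\{\,u\star w:u\in U,\ w\in W,\ u\star w\ \text{defined}\,\}$, a set that only grows as $U$ grows, we get $p\star W\subseteq V'\star W\subseteq V\star W$. Moreover, because every coordinate of $p$ is nonzero, $w\mapsto p\star w$ is the restriction to $W$ of the linear automorphism $\mathrm{diag}(p)$ of $\mathbb{P}^n$, so $p\star W=\mathrm{diag}(p)(W)$ is already closed and isomorphic to $W$.

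For the reverse inclusion $V\star W\subseteq p\star W$, I would first fix generating sets witnessing the common exponents: write $\mathbb{I}(V)=(f_1,\dots,f_r)$ and $\mathbb{I}(W)=(g_1,\dots,g_r)$ with $f_i=a_ix^{\gamma_i}-b_ix^{\delta_i}$ and $g_i=c_ix^{\gamma_i}-d_ix^{\delta_i}$, all coefficients nonzero (a binomial has exactly two terms). Substituting $x_j\mapsto p_j^{-1}x_j$ into each $g_i$ and clearing the common nonzero factor $p^{\gamma_i}p^{\delta_i}$ yields
\[
p\star W=\mathbb{V}\bigl(c_i\,p^{\delta_i}x^{\gamma_i}-d_i\,p^{\gamma_i}x^{\delta_i}\ :\ 1\le i\le r\bigr).
\]
Since $p\star W$ is closed and $V\star W$ is the closure of the image of the map $\varphi$ of \eqref{map}, it is enough to verify that $v\star w$ satisfies each of these equations whenever $v\in V$, $w\in W$, and $v\star w$ is defined. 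The crucial point is that $p,v\in V$ forces $p^{\gamma_i}v^{\delta_i}=p^{\delta_i}v^{\gamma_i}$ for every $i$: multiply $a_ip^{\gamma_i}=b_ip^{\delta_i}$ by $v^{\delta_i}$, multiply $a_iv^{\gamma_i}=b_iv^{\delta_i}$ by $p^{\delta_i}$, and cancel the nonzero $a_i$. Substituting this identity into the second monomial of the $i$-th generator above and then using $c_iw^{\gamma_i}=d_iw^{\delta_i}$ (valid since $w\in W$) gives
\[
c_i\,p^{\delta_i}v^{\gamma_i}w^{\gamma_i}-d_i\,p^{\gamma_i}v^{\delta_i}w^{\delta_i}=p^{\delta_i}v^{\gamma_i}\bigl(c_iw^{\gamma_i}-d_iw^{\delta_i}\bigr)=0,
\]
so $v\star w\in p\star W$. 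Thus $V\star W\subseteq p\star W$, and combined with the first chain this yields $p\star W=V'\star W=V\star W$.

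The \emph{same binomial exponents} hypothesis is used exactly once, in the final displayed computation: if the exponent vectors of $f_i$ and $g_i$ disagreed, the substitution $p^{\gamma_i}v^{\delta_i}\mapsto p^{\delta_i}v^{\gamma_i}$ would not turn a generator of $\mathbb{I}(p\star W)$ into a multiple of $g_i$, and the argument collapses. I do not expect a substantial obstacle: the two points needing a little care are (i) establishing the displayed description of $p\star W$ — the elementary $\mathrm{diag}(p)$-computation above, which must correctly track monomials that may vanish and is presumably isolated as its own statement earlier in the paper — and (ii) observing that verifying the equations on the image points $v\star w$ already suffices, which holds because $\mathbb{I}(V)$, $\mathbb{I}(W)$, and $\mathbb{I}(p\star W)$ are radical and $V\star W$ is reduced by definition.
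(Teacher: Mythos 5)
Your proof is correct, and while it shares the paper's overall skeleton---the sandwich $p\star W\subseteq V'\star W\subseteq V\star W$ followed by the reverse containment $V\star W\subseteq p\star W$---it establishes that reverse containment by a genuinely different, more elementary route. The paper first proves an ideal-theoretic identity (Lemma \ref{firstcontainment}, resting on Lemma \ref{idealfact} and the elimination-ideal definition of $\mathbb{I}(V)\star\mathbb{I}(W)$) showing that the ``product-coefficient'' binomials $a_ic_iX^{\alpha_i}-b_id_iX^{\beta_i}$ lie in $\mathbb{I}(V\star W)$, and then computes $\mathbb{I}(p\star W)$ via the Hadamard transformation (Theorem \ref{generatingset}) to see that it equals that same ideal; the corollary then falls out by sandwiching ideals. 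You instead work entirely with points: you identify $p\star W$ with $\mathrm{diag}(p)(W)$, read off its defining equations, and verify directly that every image point $v\star w$ satisfies them, using exactly the two identities $p^{\gamma_i}v^{\delta_i}=p^{\delta_i}v^{\gamma_i}$ (from $p,v\in V$) and $c_iw^{\gamma_i}=d_iw^{\delta_i}$ (from $w\in W$); your computation is in effect the evaluation of the paper's Lemma \ref{firstcontainment} identity at the point $(v\star w,v,w)$, combined with the rescaling by $p$. What your approach buys is a shorter, self-contained argument for the set-theoretic statement of the corollary that avoids the elimination-ideal machinery and the radicality content of Lemma \ref{lemma: ideals and varieties equal} and Theorem \ref{generatingset} (closedness of $p\star W$ and density of the image suffice, so your closing remark about radical ideals is not actually needed). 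What the paper's route buys is the stronger ideal-theoretic conclusion of Theorem \ref{mainresult1}, namely an explicit generating set for the radical ideal $\mathbb{I}(V\star W)$, which your argument recovers only up to taking $\mathbb{V}$ of the displayed binomials unless one additionally invokes Theorem \ref{generatingset}. One small presentational point: you should note that the binomial generators are homogeneous (as the paper's definition of binomial variety requires), so that identities such as $a_iv^{\gamma_i}=b_iv^{\delta_i}$ are independent of the choice of homogeneous coordinates for $v$.
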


\noindent
The above result follows from the fact that the generators of
$\mathbb{I}(V \star W)$ can be computed directly from the generators of
$\mathbb{I}(V)$ and $\mathbb{I}(W)$.
The results of \cite{BC} and \cite{FOW}, mentioned above, can be recovered from Theorem \ref{maintheorem} (see Theorems \ref{thm:binomialhypersurface}, \ref{BCtype} and \ref{toricsquare}).

In a different direction, as 
another application of Theorem~\ref{maintheorem}, we prove the following result about 
the toric ideals of graphs.

\begin{theorem}[Theorem~\ref{toricidealresult}]
Let $G$ be a finite simple graph with
edge set 
$E = \{e_1,\ldots,e_q\}$ and suppose that $H$ is a subgraph of $G$ with edge set
$E' = \{e_{i_1},\ldots,e_{i_r}\}$.  
Let $I_G \subseteq R = k[e_1,\ldots,e_q]$ be the toric ideal of $G$, and let $I_H \subseteq
k[e_{i_1},\ldots,e_{i_r}]$ denote
the toric ideal of $H$.  If $I_H^e$ is the extension of $I_H$ in $R$, then $\mathbb{V}(I_G) \star \mathbb{V}(I^e_H) = \mathbb{V}(I^e_H)$.
\end{theorem}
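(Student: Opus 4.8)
The plan is to derive this statement as a direct consequence of Theorem~\ref{maintheorem}, applied with $V = W = \mathbb{V}(I_H^e)$ and $V' = \mathbb{V}(I_G)$. With these choices, the hypothesis that $V$ and $W$ have the same binomial exponents is trivially satisfied (take the same generating set for both), so the only real work is to exhibit a point $p \in \mathbb{V}(I_G)$ with all coordinates nonzero, and to check that $\mathbb{V}(I_G)$ is genuinely a subvariety of $\mathbb{V}(I_H^e)$.

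First I would recall the standard combinatorial description of a toric ideal of a graph: $I_G \subseteq R = k[e_1,\ldots,e_q]$ is generated by binomials associated to the even closed walks of $G$, and each such generator is a \emph{pure difference} binomial $e^{\alpha} - e^{\beta}$ (coefficients $1$ and $-1$); the same holds for $I_H$, and hence for its extension $I_H^e$. From this I extract two facts. First, the point $\mathbf{1} := [1:1:\cdots:1] \in \mathbb{P}^{q-1}$ lies on $\mathbb{V}(I_G)$, since every pure difference binomial vanishes at $\mathbf{1}$; and of course $\mathbf{1}$ has no vanishing coordinate. Second, because $H$ is a subgraph of $G$, every even closed walk of $H$ is an even closed walk of $G$, so the defining binomials of $I_H^e$ lie in $I_G$; thus $I_H^e \subseteq I_G$ and therefore $\mathbb{V}(I_G) \subseteq \mathbb{V}(I_H^e)$. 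Hence $\mathbb{V}(I_G)$ is a subvariety of the binomial variety $\mathbb{V}(I_H^e)$ containing the point $\mathbf{1}$ with $\mathbf{1}_0\cdots\mathbf{1}_{q-1} \neq 0$.

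Now Theorem~\ref{maintheorem}, applied with $V = W = \mathbb{V}(I_H^e)$, $V' = \mathbb{V}(I_G)$, and $p = \mathbf{1}$, yields
\[
\mathbf{1} \star \mathbb{V}(I_H^e) \;=\; \mathbb{V}(I_G) \star \mathbb{V}(I_H^e) \;=\; \mathbb{V}(I_H^e) \star \mathbb{V}(I_H^e).
\]
To finish, I would invoke the elementary fact (recorded in Section~\ref{sec:background}) that $\mathbf{1}$ is an identity for the Hadamard product, i.e.\ $\mathbf{1} \star U = U$ for every variety $U \subseteq \mathbb{P}^{q-1}$; taking $U = \mathbb{V}(I_H^e)$ gives $\mathbf{1} \star \mathbb{V}(I_H^e) = \mathbb{V}(I_H^e)$, and combining with the displayed equalities gives $\mathbb{V}(I_G) \star \mathbb{V}(I_H^e) = \mathbb{V}(I_H^e)$, as desired.

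I do not anticipate a genuine obstacle here: once Theorem~\ref{maintheorem} is in hand the argument is essentially bookkeeping. The only points needing care are the verification of the hypotheses of Theorem~\ref{maintheorem} — namely that toric ideals of graphs are pure-difference binomial ideals, which is what puts $\mathbf{1}$ on $\mathbb{V}(I_G)$, and the containment $I_H^e \subseteq I_G$ coming from $H \subseteq G$ — together with the (well known) fact that $\mathbf{1}$ is a Hadamard identity. As a byproduct, the special case $G = H$ recovers the idempotence $\mathbb{V}(I_H) \star \mathbb{V}(I_H) = \mathbb{V}(I_H)$ of \cite{FOW}.
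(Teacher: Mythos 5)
Your proposal is correct and follows essentially the same route as the paper: the paper also verifies that $I_H^e\subseteq I_G$ via closed even walks (so $\mathbb{V}(I_G)\subseteq\mathbb{V}(I_H^e)$) and that $[1:\cdots:1]\in\mathbb{V}(I_G)$, and then applies Corollary~\ref{productandpoint} (packaged as Theorem~\ref{toricsquare}, whose proof uses Theorem~\ref{mainresult1} with all coefficients equal to $1$ to get $V\star V=V$ where you instead use $\mathbf{1}\star V=V$). The two ways of closing the argument are interchangeable, so this is the same proof.
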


We also compare some of the algebraic invariants of
$V$, $W$, and $V\star W$ when $V$ and $W$ are binomial varieties
with the same binomial exponents.  Under the condition
that $V$ and $W$ both contain at least one point with no zero
coordinates, we show that $\mathbb{I}(V), \mathbb{I}(W)$, and $\mathbb{I}(V \star W)$ all
have the same leading term ideal.  This implies that 
they all share the same Hilbert polynomial, and consequently,
$\deg(V) = \deg(W) = \deg(V \star W)$ and $\dim(V) = \dim(W)
=\dim(V\star W)$ (see Theorem \ref{mainresult3}).

Finally, we show that binomial varieties also  appear in a rather different context within the study of Hadamard products. More specifically, let $V$ be a fixed variety $V \subseteq \mathbb{P}^n$ (not necessarily a binomial variety)
and fix a point $p \in \mathbb{P}^n$  with no zero homogeneous coordinates.  Consider the  set 
\begin{equation}\label{eq: fiber} 
\psi(V,p) = \{q \in \mathbb{P}^n ~|~  q \star V = p \star V \}.
\end{equation} 
In other words, for a given Hadamard product $p \star V $, the set~\eqref{eq: fiber} consists of all points $q$ that yield the same result after taking the Hadamard product with $V$. 
The set $\psi(V,p)$ may not be an algebraic variety.
However, we show that under some mild hypothesis on 
the variety $V$, the ideal $\mathbb{I}(\psi(V,p))$ is generated
by binomials.  Furthermore, these binomials can 
be explicitly determined from a reduced Gr\"obner basis
of $\mathbb{I}(V)$ (see Theorem \ref{fiberofmap}).

Our paper is structured as follows.  In Section 2 we introduce
the relevant theory and background needed on Hadamard products.
In Section 3 we define binomial varieties with the same
exponents and prove our main result, Theorem~\ref{maintheorem} (Corollary \ref{productandpoint}).  In Section 4 we deduce 
some consequences of Theorem~\ref{maintheorem}, and we compute the dimension 
and degree of this family of varieties.  In the final section we show
how binomial varieties also appear when studying
the ideal of forms that vanish on the set $\psi(V,p)$
defined in \eqref{eq: fiber}.


\section{Background}\label{sec:background}

In this section, we recall the relevant definitions concerning Hadamard products 
and related results. Throughout this paper, $k$ denotes an algebraically closed field of  characteristic zero.  We let $\mathbb{P}^n$ denote the projective space over the field $k$ of dimension $n$, with homogeneous coordinates $[x_0:x_1:\cdots:x_n]$. Recall that a projective variety is a subset of $\mathbb{P}^n$ defined by the vanishing of a homogeneous ideal $I = \langle f_1, \ldots, f_m \rangle$ where each $f_i$ is a homogeneous polynomial in $R = k[x_0, x_1,\ldots, x_n]$ with respect to the standard grading.   If $X$ is a projective
variety, we will write $\mathbb{I}(X)$ for the defining (homogeneous)
ideal of $X$.  Similarly, given a homogeneous ideal $I$
of $R$, $\mathbb{V}(I)$ will denote the projective
variety described by $I$.  (By Hilbert's
Nullstellensatz, $\mathbb{V}(I) = \mathbb{V}(\sqrt{I})$,
where $\sqrt{I}$ is the radical of $I$, and 
$\mathbb{I}(X)$ is a radical ideal.)   If $I =
\langle f_1,\ldots,f_m\rangle$, we sometimes abuse
notation and write $\mathbb{V}(f_1,\ldots,f_m)$ instead
of $\mathbb{V}(\langle f_1,\ldots,f_m \rangle).$
Note that our 
projective varieties need not be irreducible, 
that is, $\mathbb{I}(X)$ does not need to be prime.

We summarize some of the needed results on Hadamard
products that are found in \cite{BCK,BC}. The Hadamard product is defined for two projectives varieties $X$ and $Y$, contained in the same projective space $\mathbb{P}^n$. Intuitively, the definition can be understood as being given by ``component-wise multiplication'' of the points in $X$ and $Y$. More precisely, we have the following. 

\begin{definition}[Hadamard product] Let $X,Y \subset \mathbb{P}^n$ be projective varieties. The \textit{Hadamard product of $X$ and $Y$}, denoted $X \star Y$, is the Zariski closure of the image of the rational map
\[X \times Y { \dashrightarrow} \ \mathbb{P}^n, \quad ([p_0: \cdots: p_n], [q_0: \cdots : q_n]) \mapsto [p_0q_0 : \cdots : p_n q_n]. \] 
Explicitly, we have 
\begin{equation}\label{eq: hadamard product} 
X \star Y := \overline{\{p \star q \mid p \in X,\ q \in Y,\ p \star q \textup{ is defined}\}} \subseteq \mathbb{P}^n
\end{equation} 
where $p \star q := [p_0q_0:\cdots:p_nq_n]$ is the point obtained by  component-wise multiplication of the points $p = [p_0 : \dots : p_n]$ and $q = [q_0 : \dots : q_n]$, and $p \star q$ is defined precisely when there exists at least one index $i$, $0 \leq i \leq n$, with $p_i q_i \neq 0$ (so that $p \star q = [p_0 q_0 : \cdots : p_n q_n]$ is a valid point in $\mathbb{P}^n$). 
\end{definition}

It is straightforward to use the definition of a Hadamard product to define the Hadamard power of a single projective variety. 

\begin{definition}[Hadamard power] Let $X \subset \mathbb{P}^n$ be a projective variety and let $r$ be a positive integer. The \textit{$r$-th Hadamard power} of $X$ is (inductively) defined as  \[X^{\star r} := X \star X^{\star (r-1)}\] where 
we use the convention that $X^{\star 0} :=[1:\cdots : 1]$ is a single point. 
\end{definition}

We can also define a Hadamard product
for homogeneous ideals, motivated by the definition of the Hadamard product of projective varieties.  

\begin{definition}\label{hadprodideals} (Hadamard product of ideals) 
Let $R$ be the polynomial ring $k[x_0, \ldots, x_n]$ in $n+1$ variables for a positive integer $n$, equipped with the standard grading. Let 
$I$ and $J$ be homogeneous ideals of $R$.   The \emph{Hadamard product of 
ideals} $I$ and $J$, denoted
$I \star J$, is the ideal constructed
via the following algorithm:
\begin{enumerate}
\item[$\bullet$] Define  
$S := k[x_0,\ldots,x_n,y_0,\ldots,y_n,z_0,\ldots,z_n]$.
\item [$\bullet$] Define  
$I({\bf y}) := \langle f(y_0,\ldots,y_n) ~|~
f(x_0,\ldots,x_n) \in I \rangle$ to be the 
ideal 
obtained by replacing $x_i$ with $y_i$
for all elements of $I$,
and similarly, let $J({\bf z})$
be the ideal obtained by 
replacing $x_i$ with $z_i$ for all elements
of $J$.
\item[$\bullet$] Define  
$K := I({\bf y}) + J({\bf z}) 
+ \langle x_0-y_0z_0,\ldots,x_n-y_nz_n \rangle
\subseteq S$.
\item[$\bullet$] Finally, define \begin{equation}\label{eq: def I J ideal Hadamard}
I \star J := K \cap k[x_0,\ldots,x_n].
\end{equation}
Intuitively, this last step ``eliminates the extra variables'' 
$y_0,\ldots,y_n, z_0, \ldots, z_n$ from the ideal $K$.
\end{enumerate}
\end{definition}

As first observed in \cite{BCK}, the Hadamard product of ideals corresponds precisely with the Hadamard product of projective varieties. 

\begin{lemma}[{\cite[Remark 2.6]{BCK}}]
\label{lemma: ideals and varieties equal} 
Let $X, Y \subseteq \mathbb{P}^n$ be
projective varieties with
defining (radical) ideals $\mathbb{I}(X)$ and $\mathbb{I}(Y)$ in $k[x_0,\ldots,x_n]$.  
Then $\mathbb{I}(X)\star \mathbb{I}(Y) = \mathbb{I}(X \star Y)$.  
\end{lemma}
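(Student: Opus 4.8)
The plan is to reduce the asserted equality of ideals to an equality of projective varieties together with a radicality statement, and then to prove the variety equality by applying the Closure Theorem of elimination theory to the ideal $K$ of Definition~\ref{hadprodideals}. Write $I := \mathbb{I}(X)$ and $J := \mathbb{I}(Y)$, so $I\star J = K\cap k[x_0,\ldots,x_n]$ with $K = I(\mathbf y) + J(\mathbf z) + \langle x_i - y_iz_i\rangle \subseteq S$. I would first record two preliminary points. The ideal $I\star J$ is homogeneous: giving $S$ the grading with $\deg x_i = 2$ and $\deg y_i = \deg z_i = 1$ makes each listed generator of $K$ homogeneous, so $K$ is homogeneous and its contraction to $k[x_0,\ldots,x_n]$ is homogeneous for the standard grading. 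And $I\star J$ is radical: the relations $x_i = y_iz_i$ eliminate the $x_i$, giving $S/K \cong \bigl(k[\mathbf y]/I(\mathbf y)\bigr)\otimes_k\bigl(k[\mathbf z]/J(\mathbf z)\bigr)$; since $k$ is algebraically closed, hence perfect, this tensor product of two reduced $k$-algebras is reduced, so $K$ is radical and therefore so is its contraction $I\star J$.

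Granting these, it suffices to prove the set equality $\mathbb{V}(I\star J) = X\star Y$ in $\mathbb{P}^n$, since Hilbert's Nullstellensatz then upgrades it to $I\star J = \mathbb{I}(X\star Y)$ once we know $I\star J$ is radical. To prove the set equality I would pass to affine cones. Writing $C(X), C(Y)\subseteq \mathbb{A}^{n+1}$ for the affine cones over $X$ and $Y$, and $a\star b$ for the entrywise product in $\mathbb{A}^{n+1}$, the construction of $K$ shows $\mathbb{V}(K) = \{(a\star b, a, b) : a\in C(X),\ b\in C(Y)\}$ inside $\mathbb{A}^{n+1}_{\mathbf x}\times\mathbb{A}^{n+1}_{\mathbf y}\times\mathbb{A}^{n+1}_{\mathbf z}$. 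The Closure Theorem identifies the affine variety $\mathbb{V}(K\cap k[x_0,\ldots,x_n])$ with the Zariski closure of the image $A := \{a\star b : a\in C(X),\ b\in C(Y)\}$ of $\mathbb{V}(K)$ under the coordinate projection onto $\mathbb{A}^{n+1}_{\mathbf x}$. Since $A$ is a cone containing the origin, $\overline A$ is a closed cone and hence equals the affine cone over the closed projective set $\overline{\mathbb{P}(A\setminus\{0\})}$; and one checks directly that $\mathbb{P}(A\setminus\{0\}) = \{p\star q : p\in X,\ q\in Y,\ p\star q\ \text{defined}\}$, because a nonzero entrywise product $a\star b$ forces $a\neq 0$ and $b\neq 0$ and then descends to the Hadamard product of the classes $[a], [b]$, while conversely every such $p\star q$ lifts to a point of $\mathbb{V}(K)$. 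Projectivizing, $\mathbb{V}(I\star J) = \overline{\mathbb{P}(A\setminus\{0\})} = \overline{\{p\star q\}} = X\star Y$.

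I expect the last step to demand the most care: matching the affine and projective closures — concretely, that for a cone $A$ containing $0$ one has $\overline A = C\bigl(\overline{\mathbb{P}(A\setminus\{0\})}\bigr)$, which rests on the affine cone over a Zariski-closed projective set being Zariski-closed — and bookkeeping the locus where the component-wise product collapses to the zero vector, which is exactly the set of pairs $(p,q)$ for which $p\star q$ is ``not defined'' and which gets harmlessly absorbed into the closure. The remaining ingredients (homogeneity, radicality via perfectness of $k$, and the Closure Theorem itself) are standard.
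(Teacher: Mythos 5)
Your proof is correct; note that the paper itself offers no argument for this lemma --- it is quoted verbatim from \cite[Remark 2.6]{BCK} --- so there is no internal proof to compare against, and your write-up supplies a genuine, self-contained justification. The three pillars all check out: homogeneity of $K$ under the grading $\deg x_i = 2$, $\deg y_i = \deg z_i = 1$ (and hence of its contraction under the standard grading, which differs only by rescaling); radicality via $S/K \cong (k[\mathbf y]/I(\mathbf y)) \otimes_k (k[\mathbf z]/J(\mathbf z))$ together with reducedness of tensor products of reduced algebras over a perfect field; and the identification $\mathbb{V}(K) = \{(a\star b, a, b)\}$ followed by the Closure Theorem, which is exactly where algebraic closedness of $k$ enters. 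The cone bookkeeping at the end is also right: a nonzero entrywise product $a \star b$ forces $a, b \neq 0$, so $\mathbb{P}(A\setminus\{0\})$ is precisely the set of defined Hadamard products, and the undefined pairs contribute only the origin of the cone. The one point worth making explicit is the degenerate case $X \star Y = \emptyset$ (every product undefined), where the passage from the set equality $\mathbb{V}(I\star J) = X\star Y$ to the ideal equality via the projective Nullstellensatz requires a convention for $\mathbb{I}(\emptyset)$; your affine-cone formulation handles this cleanly, since $\overline{A} = \{0\}$ there and $\mathbb{I}(\{0\}) = \langle x_0,\ldots,x_n\rangle$, consistent with the convention the paper uses elsewhere (in the proof of Theorem~\ref{fiberofmap}).
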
 

\noindent
In other words, Lemma~\ref{lemma: ideals and varieties equal} shows that the Hadamard product of the ideals
$\mathbb{I}(X)$ and $\mathbb{I}(Y)$ is precisely the defining
ideal of the Hadamard product $X \star Y$of the
varieties $X$ and $Y$.

Given $\alpha = (\alpha_0,\ldots,\alpha_n) \subseteq
\mathbb{N}^{n+1}$, we set $X^\alpha := 
x_0^{\alpha_0}x_1^{\alpha_1}\cdots x_n^{\alpha_n}$,
that is, $X^\alpha$ is a \emph{monomial} whose exponents are recorded in $\alpha$.   The \emph{degree} of $X^\alpha$ 
is $\deg(X^\alpha) := \alpha_0+\cdots+\alpha_n$.  
The following lemma, which will be required
in the next section, helps us describe some of
the elements contained in the
ideal $K$ used to define
$I\star J$ in Definition \ref{hadprodideals}.

\begin{lemma}\label{idealfact}
Let $S = k[x_0,\ldots,x_n,y_0,\ldots,y_n,z_0,\ldots,z_n]$
and consider the ideal 
$$\mathcal{I} := \langle x_0 -y_0z_0,\ldots,x_n-y_nz_n \rangle
\subseteq S.$$
Then for any $\alpha = (\alpha_0,\ldots,\alpha_n) \in \mathbb{N}^{n+1}$, we have 
$$X^\alpha - Y^\alpha Z^{\alpha} \in \mathcal{I}$$
where $X^\alpha = x_0^{\alpha_0} \cdots
x_n^{\alpha_n}$, $Y^\alpha = y_0^{\alpha_0} \cdots y_n^{\alpha_n}$, and
$Z^\alpha = z_0^{\alpha_0} \cdots 
z_n^{\alpha_n}$.
\end{lemma}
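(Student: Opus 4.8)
The plan is to prove the statement by a straightforward telescoping argument on the generators $x_i - y_i z_i$ of $\mathcal{I}$. The key algebraic fact is that for a product $X^\alpha Z^\alpha$ (or any polynomial), replacing one variable at a time by a congruent expression modulo $\mathcal{I}$ keeps everything in the same coset. Concretely, I would induct on $n$ or, more cleanly, on the total degree $\deg(X^\alpha) = \alpha_0 + \cdots + \alpha_n$, using the single identity
\[
a^m - b^m = (a-b)(a^{m-1} + a^{m-2}b + \cdots + b^{m-1}),
\]
which shows $a^m - b^m$ lies in the ideal generated by $a - b$.

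\textbf{Key steps.} First, I would observe that for each index $i$, applying the identity above with $a = x_i$, $b = y_i z_i$, $m = \alpha_i$ gives
\[
x_i^{\alpha_i} - (y_i z_i)^{\alpha_i} = x_i^{\alpha_i} - y_i^{\alpha_i} z_i^{\alpha_i} \in \langle x_i - y_i z_i \rangle \subseteq \mathcal{I}.
\]
Second, I would write the difference $X^\alpha - Y^\alpha Z^\alpha$ as a telescoping sum. Set $P_i := x_0^{\alpha_0}\cdots x_{i-1}^{\alpha_{i-1}} \cdot x_i^{\alpha_i} \cdots x_n^{\alpha_n}$ hybrid products where the first $i$ factors use $x_j^{\alpha_j}$ and the remaining use $y_j^{\alpha_j} z_j^{\alpha_j}$; more precisely, let $P_i := \left(\prod_{j < i} x_j^{\alpha_j}\right)\left(\prod_{j \geq i} y_j^{\alpha_j} z_j^{\alpha_j}\right)$, so that $P_0 = Y^\alpha Z^\alpha$ and $P_{n+1} = X^\alpha$. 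Then
\[
X^\alpha - Y^\alpha Z^\alpha = \sum_{i=0}^{n} (P_{i+1} - P_i),
\]
and each summand $P_{i+1} - P_i$ equals $\left(\prod_{j<i} x_j^{\alpha_j}\right)\left(\prod_{j>i} y_j^{\alpha_j}z_j^{\alpha_j}\right) \cdot \left(x_i^{\alpha_i} - y_i^{\alpha_i} z_i^{\alpha_i}\right)$, which lies in $\mathcal{I}$ by the first step. Since $\mathcal{I}$ is an ideal, the sum lies in $\mathcal{I}$, completing the proof.

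\textbf{Main obstacle.} There is essentially no serious obstacle here; this is a routine computation. The only mild point of care is bookkeeping in the telescoping sum — making sure the hybrid products $P_i$ are indexed consistently so that consecutive differences factor through a single $x_i^{\alpha_i} - y_i^{\alpha_i}z_i^{\alpha_i}$. An even shorter alternative, which I might prefer for brevity, is to induct on the number of nonzero entries of $\alpha$: if $\alpha_i > 0$, factor $X^\alpha - Y^\alpha Z^\alpha = x_i(X^{\alpha - e_i} - Y^{\alpha-e_i}Z^{\alpha-e_i}) + (x_i - y_i z_i) Y^{\alpha - e_i}Z^{\alpha - e_i}$ and apply the inductive hypothesis to the first term (whose degree has dropped by one) while the second term is visibly in $\mathcal{I}$; the base case $\alpha = \mathbf{0}$ gives $1 - 1 = 0 \in \mathcal{I}$.
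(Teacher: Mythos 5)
Your proposal is correct and is essentially the paper's argument in unrolled form: the paper inducts on the number of nonzero entries of $\alpha$, and its inductive step is exactly one term of your telescoping sum, with both proofs resting on the same factorization $a^m - b^m = (a-b)(a^{m-1}+\cdots+b^{m-1})$ applied to $x_i^{\alpha_i} - (y_iz_i)^{\alpha_i}$. The only nit is in your alternative at the end: the reduction $\alpha \mapsto \alpha - e_i$ is an induction on total degree, not on the number of nonzero entries as you label it, though the argument itself is fine.
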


\begin{proof}
We proceed by induction on the number of nonzero entries
in $\alpha$.  For the base case, suppose that
$\alpha$ has only one nonzero entry, so 
$X^\alpha - Y^\alpha Z^{\alpha} = x_i^a - y_i^az_i^a$ for
some integer $i \in \{1,\ldots,n\}$ and $a \in \mathbb{N}$.  
If $a=1$, then the claim follows from the definition of $\mathcal{I}$.  If $a \geq 2$, then we can factor as follows 
$$x_i^a - y_i^az_i^a = (x_i-y_iz_i)(x_i^{a-1}+x_i^{a-2}y_iz_i + x_i^{a-3}y_i^2z_i^2 + \cdots + y_i^{a-1}z_i^{a-1}),$$
and since $x_i-y_iz_i$ is in $\mathcal{I}$ by definition, it follows that $x_i^a-y_i^az_i^a \in \mathcal{I}$ as desired. 

We proceed with the induction step. Suppose by induction that the claim holds if $\alpha$ has $t \geq 1$ many nonzero entries.  Now suppose that $\alpha$ has $t+1 > 1$ nonzero entries.  By a change of variables, we may assume without loss of generality that the nonzero entries occur in the first $t+1$ coordinates, i.e., 
$\alpha = (\alpha_0,\ldots,\alpha_t,0,\cdots,0)$.   Define 
$\alpha' := (\alpha_0,\ldots,\alpha_{t-1},0,\ldots,0)$.
Then we can factor as follows: 
\begin{eqnarray*}
X^\alpha - Y^\alpha Z^\alpha & = & x_t^{\alpha_t}(X^{\alpha'}- Y^{\alpha'} Z^{\alpha'}) + Y^{\alpha'} Z^{\alpha'}
(x_t^{\alpha_t} - y_t^{\alpha_t}z_t^{\alpha_t}).
\end{eqnarray*}
Since both $X^{\alpha'}
- Y^{\alpha'} Z^{\alpha'}$ and $x_t^{\alpha_t} - y_t^{\alpha_t}z_t^{\alpha_t}$ are in $\mathcal{I}$ by 
induction, we may conclude that $X^\alpha - Y^\alpha Z^\alpha$ is in $\mathcal{I}$ as desired. This completes the induction step, and hence the proof. 
\end{proof}

Given
a point $p =[p_0:\cdots:p_n] \in \mathbb{P}^n$
and $\alpha \subseteq \mathbb{N}^{n+1}$, we 
define $p^\alpha := p_0^{\alpha_0}\cdots p_n^{\alpha_n}$.

\begin{definition}\label{{dfn:hadamardtransf}}
(Hadamard transformation)
Let $f = \sum a_\alpha X^\alpha$ be a homogeneous polynomial
of $R$ and $p =[p_0:\cdots:p_n] \in \mathbb{P}^n$ be a 
point with no zero coordinates, i.e.,
$p_0\cdots p_n \neq 0$.  Then the \emph{Hadamard 
transformation} of $f$ by $p$, denoted by $f^{\star 
p}$, is the polynomial $f^{\star p} = \sum 
\frac{a_\alpha}{p^\alpha} X^\alpha$.
\end{definition}

The Hadamard transformation can be used to describe a generating set of the defining ideal of $p \star V$. 

\begin{theorem}[{\cite[Theorem 3.5]{BC}}]
\label{generatingset}
Let $V \subset \mathbb{P}^n$ be a projective variety and let 
$p = [p_0:\cdots:p_n]$ be a point with $p_0\cdots p_n \neq 0$.
If $\mathbb{I}(V) = \langle f_1,\ldots,f_s \rangle$, then
$$\mathbb{I}(p \star V) = 
\langle f_1^{\star p},\ldots, f_s^{\star p} \rangle.$$
Futhermore, if 
$\{f_1,\dots f_s\}$ is a Gr\"obner basis for 
$\mathbb{I}(V)$ with respect to a monomial order $<$, then  $\{f_1^{\star p},\dots,f_s^{\star p}\}$ 
is a Gr\"obner  basis for $\mathbb{I}(p \star V)$ with respect to the same
monomial order $<$.
\end{theorem}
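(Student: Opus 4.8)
The plan is to prove Theorem~\ref{generatingset} in two stages: first the statement about generators, and then the strengthening to Gr\"obner bases. For the generators, I would begin by observing that the Hadamard transformation $f \mapsto f^{\star p}$ is, up to the obvious rescaling, the pullback along the linear automorphism $\lambda_p \colon \mathbb{P}^n \to \mathbb{P}^n$ sending $[x_0:\cdots:x_n] \mapsto [p_0x_0:\cdots:p_nx_n]$. Indeed, if $f = \sum a_\alpha X^\alpha$ then $f \circ \lambda_p = \sum a_\alpha p^\alpha X^\alpha$, so $f^{\star p}$ is obtained by substituting $p_i^{-1}x_i$ for $x_i$. Since $p_0\cdots p_n \neq 0$, the map $\lambda_p$ is a well-defined isomorphism with inverse $\lambda_{p^{-1}}$ where $p^{-1} := [p_0^{-1}:\cdots:p_n^{-1}]$. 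The first key step is then the purely geometric identity $p \star V = \lambda_p(V)$: every point of the form $p \star q$ with $q \in V$ and all coordinates of $q$ nonzero already lies in $\lambda_p(V)$, and the set of such $q$ is dense in $V$ precisely... well, here one must be slightly careful, because $V$ need not meet the torus. But $p \star q$ is defined for $q \in V$ as soon as $q$ has \emph{one} nonzero coordinate, which is automatic, and $p \star q = \lambda_p(q)$ whenever both sides make sense, which is always since $\lambda_p$ is an honest isomorphism; hence $\{p \star q : q \in V\} = \lambda_p(V)$ on the nose, already Zariski closed, so no closure is needed and $p \star V = \lambda_p(V)$ exactly.

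Granting $p \star V = \lambda_p(V)$, the generators statement is immediate from the standard fact that for an automorphism $\lambda$ of $\mathbb{P}^n$ induced by a linear change of coordinates, $\mathbb{I}(\lambda(V)) = \{ g : g \circ \lambda \in \mathbb{I}(V)\}$, which is the ideal generated by $f \circ \lambda^{-1}$ as $f$ ranges over a generating set of $\mathbb{I}(V)$. Tracing through the scalars, $f_i \circ \lambda_p^{-1}$ is a nonzero scalar multiple of $f_i^{\star p}$, so $\mathbb{I}(p\star V) = \langle f_1^{\star p},\ldots,f_s^{\star p}\rangle$. One should also note that $f^{\star p}$ is homogeneous of the same degree as $f$, since the transformation only rescales coefficients of monomials, so everything stays inside the graded setting. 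Alternatively — and this may be the cleaner write-up — one can avoid invoking geometry and argue entirely on the ideal side via Definition~\ref{hadprodideals}: compute $\mathbb{I}(p)\star\mathbb{I}(V)$ directly, using that $\mathbb{I}(p) = \langle p_j x_i - p_i x_j \rangle$ and eliminating, but the automorphism argument is shorter.

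For the Gr\"obner basis claim, the crucial observation is that the map $f \mapsto f^{\star p}$ preserves the support of every polynomial: $f$ and $f^{\star p}$ involve exactly the same monomials $X^\alpha$ (only the coefficients change, and they change to nonzero scalars since $p^\alpha \neq 0$). Consequently, for any monomial order $<$, the leading monomial is preserved: $\mathrm{LM}_<(f^{\star p}) = \mathrm{LM}_<(f)$, and more precisely $\mathrm{LT}_<(f^{\star p})$ is the corresponding scalar multiple of $\mathrm{LT}_<(f)$. I would then argue that if $\{f_1,\ldots,f_s\}$ is a Gr\"obner basis of $\mathbb{I}(V)$, then $\langle \mathrm{LM}_<(g) : g \in \mathbb{I}(p\star V)\rangle = \langle \mathrm{LM}_<(f_i^{\star p})\rangle$: the inclusion $\supseteq$ is clear, and for $\subseteq$, given $g \in \mathbb{I}(p\star V) = \lambda_p(V)^\vee$, the polynomial $g^{\star p^{-1}} = g \circ \lambda_p$ (up to scalar) lies in $\mathbb{I}(V)$, so its leading monomial — which equals $\mathrm{LM}_<(g)$ by support-preservation — is divisible by some $\mathrm{LM}_<(f_i) = \mathrm{LM}_<(f_i^{\star p})$. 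This shows $\{f_i^{\star p}\}$ generates the leading term ideal, hence is a Gr\"obner basis. The main obstacle — really the only subtle point — is getting the density/closure bookkeeping in $p \star V = \lambda_p(V)$ exactly right, specifically confirming that no Zariski closure is actually lost when $V$ fails to meet the open torus; the resolution is that $\lambda_p$ is a genuine isomorphism of $\mathbb{P}^n$ (not merely a rational map), so the image set $\{p\star q : q \in V\}$ is literally $\lambda_p(V)$ and already closed. Everything else is routine tracking of nonzero scalars.
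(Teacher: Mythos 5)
Your argument is correct. Note first that the paper does not prove this statement at all: Theorem~\ref{generatingset} is quoted verbatim from \cite[Theorem 3.5]{BC}, so there is no in-paper proof to compare against. Your route --- identifying $p\star(-)$ with the coordinate-scaling automorphism $\lambda_p$ of $\mathbb{P}^n$ --- is the natural one and is essentially how the result is established in the source. The key points all check out: since $p_0\cdots p_n\neq 0$, the map $\lambda_p$ is a genuine automorphism, $p\star q=\lambda_p(q)$ for every $q\in V$ (any $q\in\mathbb{P}^n$ has some nonzero coordinate, so $p\star q$ is always defined), hence $\{p\star q: q\in V\}=\lambda_p(V)$ is already Zariski closed and no closure is lost; $\mathbb{I}(\lambda_p(V))=\{g: g\circ\lambda_p\in\mathbb{I}(V)\}$ because $g\mapsto g\circ\lambda_p$ is a graded ring automorphism, so it carries a generating set to a generating set; and $f\circ\lambda_p^{-1}$ is exactly $f^{\star p}$ for the chosen affine representative of $p$ (a global nonzero scalar multiple for any other representative, since $f$ is homogeneous). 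Your Gr\"obner basis argument is also sound: the Hadamard transformation preserves supports and hence leading monomials, and applying it to an arbitrary $g\in\mathbb{I}(p\star V)$ via $g\circ\lambda_p\in\mathbb{I}(V)$ shows the leading monomials of the $f_i^{\star p}$ generate the initial ideal. Two cosmetic quibbles only: the notation $\lambda_p(V)^{\vee}$ for $\mathbb{I}(\lambda_p(V))$ is nonstandard, and the worry about scalars in $f_i\circ\lambda_p^{-1}$ versus $f_i^{\star p}$ evaporates once you fix a representative of $p$, since they are then literally equal.
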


Recall that a Gr\"obner bases $\mathcal{G} = \{f_1,\ldots,f_s\}$ of an ideal $I$ is a \emph{reduced} Gr\"obner basis if (1) the leading coefficient, that is, the coefficient of the leading term of  
each $f_i$ is $1$, and (2) no monomial appearing in 
$f_i$ is divisible by $LT(f_j)$, the lead term
of $f_j$, for any $j \neq i$.  
A key property of a reduced Gr\"obner bases 
of an ideal is that for a fixed monomial order, 
there is a unique reduced Gr\"obner basis (see
\cite[Chapter 2, Section 7, Theorem 5]{CLO}).   
We can now extend the previous theorem.

\begin{theorem} \label{reducedgb}
Let $V \subset \mathbb{P}^n$ be a projective variety and $p = [p_0:\cdots:p_n]$ with $p_0\cdots p_n \neq 0$.
Suppose 
$\{f_1,\dots f_s\}$ is a reduced Gr\"obner basis with respect to
 a monomial order $<$ for 
$\mathbb{I}(V)$ with $LT(f_i) = X^{\alpha_i}$ for $i=1,\ldots,s$. Then  $\{p^{\alpha_1}f_1^{\star p},\dots,p^{\alpha_s}f_s^{\star p}\}$ 
is a reduced 
Gr\"obner  basis for $\mathbb{I}(p \star V)$ with respect to the same
monomial order $<$.
\end{theorem}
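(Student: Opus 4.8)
The plan is to leverage Theorem~\ref{generatingset}, which already tells us that $\{f_1^{\star p},\ldots,f_s^{\star p}\}$ is a Gr\"obner basis for $\mathbb{I}(p \star V)$ with respect to $<$; the only remaining work is to check that after the harmless rescaling $f_i \mapsto p^{\alpha_i}f_i^{\star p}$ this Gr\"obner basis satisfies the two defining properties of a \emph{reduced} Gr\"obner basis. Note first that multiplying a generator by the nonzero scalar $p^{\alpha_i}$ does not change the ideal, the leading term, or membership of monomials, so $\{p^{\alpha_1}f_1^{\star p},\ldots,p^{\alpha_s}f_s^{\star p}\}$ is still a Gr\"obner basis for $\mathbb{I}(p\star V)$.

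For the first condition, observe that if $f_i = X^{\alpha_i} + (\text{lower terms})$, since its leading coefficient is $1$, then by the definition of the Hadamard transformation the leading term of $f_i^{\star p}$ is $\frac{1}{p^{\alpha_i}}X^{\alpha_i}$; hence the leading coefficient of $p^{\alpha_i} f_i^{\star p}$ is exactly $1$, as required. For the second condition, I would use the fact that the Hadamard transformation only rescales the coefficient of each monomial $X^\alpha$ by $\frac{1}{p^\alpha}$, and in particular does not introduce any new monomials or remove existing ones: the set of monomials appearing in $p^{\alpha_i}f_i^{\star p}$ is exactly the set of monomials appearing in $f_i$. Since $\{f_1,\ldots,f_s\}$ is a reduced Gr\"obner basis, no monomial of $f_i$ is divisible by $LT(f_j) = X^{\alpha_j}$ for $j \neq i$; but $LT(p^{\alpha_j}f_j^{\star p}) = X^{\alpha_j}$ as well, so no monomial of $p^{\alpha_i}f_i^{\star p}$ is divisible by the leading term of $p^{\alpha_j}f_j^{\star p}$ for $j \neq i$. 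This establishes condition (2) and completes the proof.

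I do not anticipate a serious obstacle here: the statement is essentially a bookkeeping refinement of Theorem~\ref{generatingset}, and the only subtlety is making sure the scalar $p^{\alpha_i}$ is chosen precisely so as to normalize the leading coefficient to $1$ — which is exactly why the factor $p^{\alpha_i}$ (rather than some other normalization) appears in the statement. One point worth stating carefully is that $p^\alpha \neq 0$ for every exponent vector $\alpha$ appearing, which holds because $p_0 \cdots p_n \neq 0$; this is what guarantees that the Hadamard transformation is well-defined and that the rescaling is invertible. If one wanted to be maximally careful, one could also remark that the uniqueness of reduced Gr\"obner bases (cited from \cite[Chapter 2, Section 7, Theorem 5]{CLO}) then identifies this as \emph{the} reduced Gr\"obner basis of $\mathbb{I}(p \star V)$ for the order $<$.
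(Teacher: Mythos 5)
Your proposal is correct and follows essentially the same argument as the paper's proof: invoke Theorem~\ref{generatingset} to get a Gr\"obner basis, note that the scalar $p^{\alpha_i}$ normalizes the leading coefficient to $1$, and observe that the Hadamard transformation preserves the set of monomials appearing in each $f_i$, so the divisibility condition for reducedness carries over. No gaps.
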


\begin{proof}
By Theorem \ref{generatingset}, the set
$\{f_1^{\star p},\ldots,f_s^{\star p}\}$ is a 
Gr\"obner basis of $\mathbb{I}(p \star V)$.   
Since $f_i = X^{\alpha_i} + \mbox{(lower order terms)}$,
we have 
$f_i^{\star p} = \frac{1}{p^{\alpha_i}}X^{\alpha_i} + 
\mbox{(lower order terms)}.$
Thus $p^{\alpha_i}f_i^{\star p}$ has leading coefficient
$1$ for all $i$.  Moreover, since 
$LT(p^{\alpha_i}f_i^{\star p}) = p^{\alpha_i}LT(f_i^{\star p}) = 
X^{\alpha_i}$ for all $i=1,\ldots,s$, it is follows 
that $\{p^{\alpha_1}f_1^{\star p},\dots,p^{\alpha_s}f_s^{\star p}\}$ is 
also a Gr\"obner basis of $\mathbb{I}(p \star V)$.

So we need only to check that for each $1 \leq i \leq 
s$, the leading term of $p^{\alpha_i}f_i^{\star p}$ does not divide any term of $p^{\alpha_j}f_j^{\star p}$ for 
$i \neq j$. But since $\{f_1,\ldots,f_s\}$ is a reduced 
Gr\"obner basis, for each $i=1,\ldots,s$, 
the leading term of $f_i$ does not divide any term 
of $f_j$ for $i \neq j$. Furthermore, the terms of each 
$p^{\alpha_i}f_i^{\star p}$ are exactly same as 
the terms of $f_i$, simply with different coefficients. 
Thus, it must also be the case that the leading term of $p^{\alpha_i}f_i^{\star p}$ does not divide any term of $p^{\alpha_j}f_j^{\star p}$ for $i \neq j$.
\end{proof}

We will use Theorem~\ref{reducedgb} in our arguments below.


\section{Main results: binomial Ideals and Binomial Varieties}
\label{sec: binomial main}

As stated in the Introduction, the primary goal of this paper is to better understand
the Hadamard products of binomial ideals.  Our main
result is Theorem \ref{mainresult1} below.  This result then allows us
to recover some of the known results in the literature
as special cases (see Section 4).

We begin by recalling
the relevant definitions and notation.
We take the ambient ring to be $R =k[x_0, \ldots, x_n]$ where, as before, $k$ is an algebraically closed field of characteristic $0$. 

\begin{definition}(Binomials, binomial ideal,
binomial variety)  
A \emph{binomial} is a polynomial with exactly
two terms, that is, $a_1X^{\alpha_1} - a_2 X^{\alpha_2}$ where
$0 \neq a_i \in k$ for $i=1,2$, and $\alpha_1, \alpha_2 \in \mathbb{N}^{n+1}$ with $\alpha_1 \neq \alpha_2$.   We say that a binomial
is \emph{homogeneous} if $\deg(X^{\alpha_1}) = \deg(X^{\alpha_2})$.
An  ideal $I \subseteq R$ is a \emph{(homogeneous) 
binomial ideal} if $I$ is generated by
(homogeneous) binomials.
A projective variety $V \subseteq \mathbb{P}^n$ 
is a \emph{binomial variety} if $\mathbb{I}(V)$ is a homogeneous binomial ideal.  
\end{definition}

\begin{remark}\label{binomialremark}
In some references, notably \cite{HHO}, a binomial is defined to be the difference of two
monomials, that is, $a_1 = a_2 =1$. In other sources, for example \cite{ES}, one of $a_1$ and $a_2$ is 
allowed to be zero, and consequently, a monomial is also considered to be a binomial.  Our definition for a binomial only allows 
for nonzero coefficients, and we allow the coefficients to be different from $1$.
\end{remark}

Given two binomial varieties $V$ and $W$
in $\mathbb{P}^n$, the next definition
provides terminology to compare the binomials
in the generating sets of $\mathbb{I}(V)$ and $\mathbb{I}(W)$.

\begin{definition}(Same binomial exponents)\label{definition: same exponents}
Two binomial varieties $V$ and $W$ 
\emph{have the same binomial exponents} 
if there are two ordered subsets 
$\{\alpha_1,\ldots,\alpha_s\}$ and $\{\beta_1,\ldots,\beta_s\}$ of ${\mathbb{N}}^{n+1}$  such that $\alpha_i \neq \beta_i$ for all $i=1,\ldots,s$, the pairs $(\alpha_i, \beta_i)$ of exponents are pairwise distinct for all $i=1,\ldots,s$, and there are nonzero constants 
$a_1,\ldots,a_s,b_1,\ldots,b_s,c_1,\ldots,c_s,
d_1,\ldots,d_s\in k \setminus \{0\}$
such that
$$\mathbb{I}(V) = \langle a_1X^{\alpha_1} -b_1X^{\beta_1},
a_2X^{\alpha_2} - b_2X^{\beta_2},\ldots, a_sX^{\alpha_s}-b_sX^{\beta_s} \rangle$$
and 
$$\mathbb{I}(W) =
\langle c_1X^{\alpha_1} -d_1X^{\beta_1},
c_2X^{\alpha_2} - d_2X^{\beta_2},\ldots, c_sX^{\alpha_s}-d_sX^{\beta_s} \rangle.
$$
\end{definition}

In other words, two binomial varieties have
the same binomial exponents if the only
difference between the sets of binomials that generate
the corresponding ideals are in the coefficients appearing in the binomials.  Note that, by definition, a
binomial variety has the same binomial exponents
with itself.

\begin{lemma}\label{firstcontainment}
Let $V$ and $W$ be binomial varieties
of $\mathbb{P}^n$. Assume that $V$ and $W$ have the same binomial
exponents with ideals $\mathbb{I}(V)$ and $\mathbb{I}(W)$ as in Definition~\ref{definition: same exponents}. 

Then the ideal 
$$ 
 \langle a_1c_1X^{\alpha_1} -b_1d_1X^{\beta_1},
a_2c_2X^{\alpha_2} - b_2d_2X^{\beta_2},\ldots, a_sc_sX^{\alpha_s}-b_sd_sX^{\beta_s} \rangle$$
is contained in the ideal 
$\mathbb{I}(V\star W)$. 
\end{lemma}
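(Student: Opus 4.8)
The plan is to produce, for each $i$, an explicit element of the ideal $K\subseteq S$ from Definition~\ref{hadprodideals} whose intersection with $k[x_0,\ldots,x_n]$ is a scalar multiple of $a_ic_iX^{\alpha_i}-b_id_iX^{\beta_i}$, and then invoke Lemma~\ref{lemma: ideals and varieties equal} to conclude that this binomial lies in $\mathbb{I}(V\star W)=\mathbb{I}(V)\star\mathbb{I}(W)$. Fix $i$. Inside $S=k[\mathbf{x},\mathbf{y},\mathbf{z}]$ we have the generator $a_iY^{\alpha_i}-b_iY^{\beta_i}\in I(\mathbf{y})$ coming from the $i$-th generator of $\mathbb{I}(V)$, and the generator $c_iZ^{\alpha_i}-d_iZ^{\beta_i}\in J(\mathbf{z})$ coming from the $i$-th generator of $\mathbb{I}(W)$. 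Also, by Lemma~\ref{idealfact}, both $X^{\alpha_i}-Y^{\alpha_i}Z^{\alpha_i}$ and $X^{\beta_i}-Y^{\beta_i}Z^{\beta_i}$ lie in $\langle x_0-y_0z_0,\ldots,x_n-y_nz_n\rangle\subseteq K$.

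The key step is the algebraic identity expressing $a_ic_iX^{\alpha_i}-b_id_iX^{\beta_i}$ as an $S$-linear combination of these four elements of $K$. The natural bookkeeping is:
\begin{align*}
a_ic_iX^{\alpha_i}-b_id_iX^{\beta_i}
&= a_ic_i\bigl(X^{\alpha_i}-Y^{\alpha_i}Z^{\alpha_i}\bigr)
 - b_id_i\bigl(X^{\beta_i}-Y^{\beta_i}Z^{\beta_i}\bigr)\\
&\quad + c_iZ^{\alpha_i}\bigl(a_iY^{\alpha_i}-b_iY^{\beta_i}\bigr)
 + b_iY^{\beta_i}\bigl(c_iZ^{\alpha_i}-d_iZ^{\beta_i}\bigr).
\end{align*}
Expanding the right-hand side, the terms $a_ic_iY^{\alpha_i}Z^{\alpha_i}$, $b_id_iY^{\beta_i}Z^{\beta_i}$, $b_ic_iY^{\beta_i}Z^{\alpha_i}$ all cancel in pairs, leaving exactly $a_ic_iX^{\alpha_i}-b_id_iX^{\beta_i}$. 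Each of the four bracketed factors lies in $K$ (the first two by Lemma~\ref{idealfact}, the last two as generators of $I(\mathbf{y})$ and $J(\mathbf{z})$ respectively), and $K$ is an ideal of $S$, so the whole combination lies in $K$. Since $a_ic_iX^{\alpha_i}-b_id_iX^{\beta_i}\in k[x_0,\ldots,x_n]$, it lies in $K\cap k[x_0,\ldots,x_n]=\mathbb{I}(V)\star\mathbb{I}(W)$, which by Lemma~\ref{lemma: ideals and varieties equal} equals $\mathbb{I}(V\star W)$. As $i$ was arbitrary, all $s$ generators of the stated ideal lie in $\mathbb{I}(V\star W)$, and hence so does the ideal they generate.

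The only real obstacle is discovering the correct four-term identity above and verifying the cancellation; once that bookkeeping is in hand, everything else is a direct appeal to Lemma~\ref{idealfact} and Lemma~\ref{lemma: ideals and varieties equal}. One minor point worth checking is that $a_ic_iX^{\alpha_i}-b_id_iX^{\beta_i}$ is genuinely a binomial, i.e.\ that neither coefficient vanishes and $\alpha_i\neq\beta_i$ — but this is immediate since $a_i,b_i,c_i,d_i\in k\setminus\{0\}$ and $\alpha_i\neq\beta_i$ by the hypotheses of Definition~\ref{definition: same exponents}. (If for some $i$ one had $a_ic_i=b_id_i$ and $\deg X^{\alpha_i}=\deg X^{\beta_i}$ forced nothing extra, the element is still a legitimate binomial; no degeneracy arises.) Note this lemma only gives one containment; the reverse containment — that these generators in fact generate all of $\mathbb{I}(V\star W)$ — presumably requires the hypothesis that $V$ or $W$ contains a point with no zero coordinates and is treated separately in the sequel.
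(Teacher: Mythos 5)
Your proof is correct and follows essentially the same route as the paper: the identical four-term identity expressing $a_ic_iX^{\alpha_i}-b_id_iX^{\beta_i}$ as an $S$-combination of the two elements from Lemma~\ref{idealfact} and the two generators of $\mathbb{I}(V)(\mathbf{y})$ and $\mathbb{I}(W)(\mathbf{z})$, followed by intersecting with $k[x_0,\ldots,x_n]$ and invoking Lemma~\ref{lemma: ideals and varieties equal}. The cancellation check and the closing remarks are fine.
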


\begin{proof} 
In the ring $S= k[x_0,\ldots,x_n,y_0,\ldots,y_n,
z_0,\ldots,z_n]$, we have
the identity
\begin{eqnarray*}
a_ic_iX^{\alpha_i} - b_id_iX^{\beta_i} & = & 
a_ic_i(X^{\alpha_i}-Y^{\alpha_i}Z^{\alpha_i})
- b_id_i(X^{\beta_i} - Y^{\beta_i}Z^{\beta_i}) +\\
&& c_iZ^{\alpha_i}(a_iY^{\alpha_i} - b_iY^{\beta_i})
+ b_iY^{\beta_i}(c_iZ^{\alpha_i} - d_iZ^{\beta_i}) 
\end{eqnarray*}
for each $i = 1,\ldots,s$.  Note that it follows from Lemma~\ref{idealfact} that the first two terms on
the right hand side of the above expression 
are elements of 
the ideal $K$ in Definition \ref{hadprodideals}.
Moreover, by definition of the ideals $\mathbb{I}(V)({\bf y})$ (respectively $\mathbb{I}(W)({\bf z})$), 
$a_iY^{\alpha_i} - b_iY^{\beta_i}$ (respectively $c_iZ^{\alpha_i} - d_iZ^{\beta_i}$)
is in $\mathbb{I}(V)({\bf y})$ (respectively $\mathbb{I}(W)({\bf z})$). It follows that $a_ic_iX^{\alpha_i} - b_i d_i X^{\beta_i}$ is in $K$. 

Since $a_ic_iX^{\alpha_i} - b_id_iX^{\beta_i}$ 
contains only the variables $x_0,\ldots,x_n$, it follows from  
Definition \ref{hadprodideals} that
$$a_ic_iX^{\alpha_i} - b_id_iX^{\beta_i} 
\in K \cap k[x_0,\ldots,x_n] =: \mathbb{I}(V) \star \mathbb{I}(W) \mbox{ for all $i=1,\ldots,s$}.$$ 
Because $\mathbb{I}(V) \star \mathbb{I}(W) = \mathbb{I}(V \star W)$ by
Lemma \ref{lemma: ideals and varieties equal}, 
this completes the proof.
\end{proof}

We can now prove our first main result.

\begin{theorem}\label{mainresult1}
Let $V$ and $W$ be binomial varieties
of $\mathbb{P}^n$. Assume that $V$ and $W$ have the same binomial
exponents.  In addition,
suppose that $V$ or $W$ contains a point 
$p = [p_0:\cdots:p_n]$
with $p_0\cdots p_n \neq 0$.
Then $V \star W$ is also a binomial
variety that has the same binomial exponents
as $V$ and $W$.   More precisely,
if 
$$\mathbb{I}(V) = \langle a_1X^{\alpha_1} -b_1X^{\beta_1},
a_2X^{\alpha_2} - b_2X^{\beta_2},\ldots, a_sX^{\alpha_s}-b_sX^{\beta_s} \rangle$$
and 
$$\mathbb{I}(W) =
\langle c_1X^{\alpha_1} -d_1X^{\beta_1},
c_2X^{\alpha_2} - d_2X^{\beta_2},\ldots, c_sX^{\alpha_s}-d_sX^{\beta_s} \rangle,
$$
then
\begin{equation}\label{samebinomial}
\mathbb{I}(V\star W) = 
 \langle a_1c_1X^{\alpha_1} -b_1d_1X^{\beta_1},
a_2c_2X^{\alpha_2} - b_2d_2X^{\beta_2},\ldots, a_sc_sX^{\alpha_s}-b_sd_sX^{\beta_s} \rangle.
\end{equation}
\end{theorem}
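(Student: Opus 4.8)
The plan is to prove the theorem by establishing the two inclusions in \eqref{samebinomial} separately, leveraging the machinery already set up. One inclusion is immediate: Lemma~\ref{firstcontainment} gives directly that the binomial ideal $J := \langle a_1c_1X^{\alpha_1} - b_1d_1X^{\beta_1}, \ldots, a_sc_sX^{\alpha_s} - b_sd_sX^{\beta_s}\rangle$ is contained in $\mathbb{I}(V\star W)$. Note that this inclusion does \emph{not} require the hypothesis that $V$ or $W$ contains a point with no zero coordinates; that hypothesis will be needed only for the reverse inclusion.

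For the reverse inclusion $\mathbb{I}(V \star W) \subseteq J$, I would exploit the point $p$ with $p_0\cdots p_n \neq 0$. Without loss of generality say $p \in V$. Since $p \in V$, component-wise multiplication gives $p \star W \subseteq V \star W$, and hence $\mathbb{I}(V \star W) \subseteq \mathbb{I}(p \star W)$. So it suffices to show $\mathbb{I}(p \star W) \subseteq J$ — or better, to identify $\mathbb{I}(p\star W)$ exactly and check it equals $J$. By Theorem~\ref{generatingset}, since $\mathbb{I}(W) = \langle c_iX^{\alpha_i} - d_iX^{\beta_i} \rangle_{i=1}^s$, we get $\mathbb{I}(p \star W) = \langle (c_iX^{\alpha_i} - d_iX^{\beta_i})^{\star p} \rangle_{i=1}^s = \langle \tfrac{c_i}{p^{\alpha_i}}X^{\alpha_i} - \tfrac{d_i}{p^{\beta_i}}X^{\beta_i}\rangle_{i=1}^s$. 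Clearing denominators (multiplying the $i$-th generator by $p^{\alpha_i}$, a nonzero scalar), this equals $\langle c_i X^{\alpha_i} - d_i p^{\alpha_i - \beta_i} X^{\beta_i}\rangle_{i=1}^s$ — a binomial ideal with the same exponents $(\alpha_i,\beta_i)$ as $W$. The key observation is that the only freedom in a binomial ideal generated by binomials with \emph{fixed, distinct exponent pairs} is the ratio of the two coefficients in each generator, because $a_iX^{\alpha_i} - b_iX^{\beta_i}$ and $a_i'X^{\alpha_i} - b_i'X^{\beta_i}$ generate the same ideal exactly when $a_i/b_i = a_i'/b_i'$ (one is a scalar multiple of the other), and no nontrivial $k[x]$-combination of generators with distinct exponent pairs can produce another generator's supporting monomials. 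Thus $\mathbb{I}(p\star W)$ is determined by the coefficient ratios $c_i/(d_i p^{\alpha_i - \beta_i})$.

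So the heart of the argument is to pin down these ratios using $p \in V$. Since $p \in V = \mathbb{V}(a_iX^{\alpha_i} - b_iX^{\beta_i})$, we have $a_i p^{\alpha_i} - b_i p^{\beta_i} = 0$, i.e. $p^{\alpha_i - \beta_i} = p^{\alpha_i}/p^{\beta_i} = b_i/a_i$ (all these quantities are nonzero since $p$ has no zero coordinate, so the division is legitimate). Substituting, the $i$-th generator of $\mathbb{I}(p\star W)$ becomes $c_iX^{\alpha_i} - d_i(b_i/a_i)X^{\beta_i}$, which generates the same ideal as $a_ic_iX^{\alpha_i} - b_id_iX^{\beta_i}$ (scale by $a_i \neq 0$). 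Therefore $\mathbb{I}(p \star W) = J$. Combining: $J \subseteq \mathbb{I}(V\star W) \subseteq \mathbb{I}(p\star W) = J$, forcing equality throughout, and in particular $\mathbb{I}(V\star W) = J$, which is \eqref{samebinomial}. This also shows $V\star W = p \star W$, so $V\star W$ is a binomial variety with the same binomial exponents, as claimed.

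The main obstacle — the step requiring the most care — is justifying that $\mathbb{I}(p\star W) \subseteq \mathbb{I}(V\star W)$ is actually an equality of \emph{ideals} and not merely a containment, i.e. that the containment $\mathbb{V}(J) \supseteq V\star W$ coming from Lemma~\ref{firstcontainment} together with $V\star W \subseteq p\star W$ genuinely sandwiches things. The clean way around this is exactly the sandwich above, which avoids any delicate argument about when the generic-point inclusion $p \star W \subseteq V \star W$ is tight: we never need that inclusion to be an equality a priori, because Lemma~\ref{firstcontainment} supplies the opposite direction $J \subseteq \mathbb{I}(V\star W)$, and $\mathbb{I}(V\star W) \subseteq \mathbb{I}(p\star W)$ is automatic from $p\star W \subseteq V\star W$. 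One should be mildly careful that the generators $a_ic_iX^{\alpha_i} - b_id_iX^{\beta_i}$ are genuinely binomials — i.e. $a_ic_i, b_id_i \neq 0$ and $\alpha_i \neq \beta_i$ — which is immediate from the Definition~\ref{definition: same exponents} hypotheses, so \eqref{samebinomial} indeed exhibits $V\star W$ as a binomial variety with the same exponents.
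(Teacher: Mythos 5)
Your proposal is correct and follows essentially the same route as the paper: Lemma~\ref{firstcontainment} for one containment, then (taking $p\in V$ without loss of generality) using Theorem~\ref{generatingset} together with the relation $a_ip^{\alpha_i}=b_ip^{\beta_i}$ to compute $\mathbb{I}(p\star W)=J$ exactly, and sandwiching via $J\subseteq \mathbb{I}(V\star W)\subseteq \mathbb{I}(p\star W)=J$. The only remark is that your ``key observation'' about coefficient ratios determining the ideal is superfluous (and would need care to justify in general): since each cleared generator of $\mathbb{I}(p\star W)$ is literally a nonzero scalar multiple of the corresponding generator of $J$, the two ideals coincide without any such uniqueness claim.
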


\begin{proof} 
It suffices to verify the equality 
\eqref{samebinomial}, since the first conclusion follows immediately from  this 
statement. By Lemma \ref{firstcontainment}, it 
is enough to show that the containment
$\subseteq$ holds.

Without loss of generality, suppose
that $p = [p_0:\cdots:p_n]$ with
$p_0 \cdots p_n \neq 0$ is in $V$.
By Theorem \ref{generatingset}, 
\begin{eqnarray*}
\mathbb{I}(p \star W) &=&
\langle (c_1X^{\alpha_1} -d_1X^{\beta_1})^{\star p},
(c_2X^{\alpha_2} - d_2X^{\beta_2})^{\star p},\ldots, 
(c_sX^{\alpha_s}-d_sX^{\beta_s})^{\star p} \rangle \\
&&
\left\langle
\frac{c_1}{p^{\alpha_1}}X^{\alpha_1} -
\frac{d_1}{p^{\beta_1}}X^{\beta_1},
\ldots, 
\frac{c_s}{p^{\alpha_s}}X^{\alpha_s} -
\frac{d_s}{p^{\beta_s}}X^{\beta_s}
\right\rangle.
\end{eqnarray*}
Since $p \in V$, we also know that 
$$
a_ip^{\alpha_i} - b_ip^{\beta_i} =0 ~~
\mbox{so that}~~ p^{\alpha_i} = \frac{b_i}{a_i}p^{\beta_i} ~~
\mbox{for all $i=1,\ldots,s$}$$
where we are also using that $a_i \neq 0$ for all $i=1,\ldots,s$. 
Using these identities, we have
\begin{eqnarray*}
\mathbb{I}(p \star W) &=&
\left\langle
\frac{c_1}{\frac{b_1}{a_1}p^{\beta_1}}X^{\alpha_1} -
\frac{d_1}{p^{\beta_1}}X^{\beta_1},
\ldots, 
\frac{c_s}{
\frac{b_s}{a_s}p^{\beta_s}
}X^{\alpha_s} -
\frac{d_s}{p^{\beta_s}}X^{\beta_s}
\right\rangle.
\end{eqnarray*}
Clearing denominators then gives
$$\mathbb{I}(p\star W) =
\langle a_1c_1X^{\alpha_1} -b_1d_1X^{\beta_1},
a_2c_2X^{\alpha_2} - b_2d_2X^{\beta_2},\ldots, a_sc_sX^{\alpha_s}-b_sd_sX^{\beta_s} \rangle.
$$
This shows that $\mathbb{I}(p \star W)$ is equal to the RHS of~\eqref{samebinomial}. 
Since $p \star W \subseteq V \star W$,
we have $\mathbb{I}(p \star W) \supseteq \mathbb{I}(V\star W)$, so $\mathbb{I}(V \star W)$ is contained in the RHS of~\eqref{samebinomial}, 
as desired. 
\end{proof}

\begin{example}
We illustrate some of the previous definitions and results 
with an example in $\mathbb{P}^2$.  Let 
$R = k[x,y,z]$ be the associated coordinate ring,
and suppose that 
$$I = \langle x^3 - 2y^2z \rangle ~~\mbox{and}~~
J = \langle x^3 - 2yz^2 \rangle.$$
Note that  the exponents that appear in the 
binomial generators of $I$ and $J$ are not the same. 

Using Definition \ref{hadprodideals}, we can compute
$I \star J$ using Macaulay2 \cite{M2} to find
that $I \star J = \langle 0 \rangle$.  If $V = \mathbb{V}(I)$
and $W = \mathbb{V}(J)$, we thus have
$$V\star W = \mathbb{V}(I) \star \mathbb{V}(J) 
= \mathbb{V}(I \star J) = \mathbb{P}^2.$$
Note that $I$ and $J$ do not have the same exponents, so Theorem~\ref{mainresult1} does not
apply.

On the
other hand, certainly the ideal $I$ has the same binomial exponents as itself. If we compute $I \star I$ using Macaulay 2, we get 
$I \star I = \langle x^3 - 4y^2z \rangle$. This agrees with the conclusion of Theorem~\ref{mainresult1}. 
\end{example}

Next, it is worth noting that our proof of Theorem~\ref{mainresult1} actually gives a stronger result. More specifically, it shows that if a binomial variety $V$ contains a point $p$ with no zero coordinates, then for any other binomial variety $W$
with the same binomial exponents as $V$, we have that the Hadamard products of $W$ with $V$ and $p$ are equal, i.e., $V \star W = p \star W$. 
We can refine this observation even further to obtain the following corollary.

\begin{corollary}\label{productandpoint}
Let $V$ and $W$ be binomial varieties
of $\mathbb{P}^n$. Assume that $V$ and $W$ have the same binomial
exponents.   Let $V' \subseteq V$ be any
subvariety.  If $V'$ contains
a point
$p = [p_0:\cdots:p_n]$
with $p_0\cdots p_n \neq 0$, then  
$$p \star W = V' \star W = V \star W.$$
\end{corollary}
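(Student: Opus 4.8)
The plan is to leverage the strengthened form of Theorem~\ref{mainresult1} that the authors have just highlighted: under the hypothesis that $V$ and $W$ are binomial varieties with the same binomial exponents, the computation in the proof of Theorem~\ref{mainresult1} shows that for \emph{any} point $p\in V$ with no zero coordinates, $\mathbb{I}(p\star W)$ equals the explicit binomial ideal on the right-hand side of~\eqref{samebinomial}, and hence $p\star W = V\star W$. So the task reduces to showing that the subvariety hypothesis does not change anything, i.e. that passing from $V$ to $V'$ (with $V'$ still containing the no-zero-coordinate point $p$) leaves the Hadamard product with $W$ unchanged.

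First I would observe that the chain of inclusions $\{p\}\subseteq V'\subseteq V$ gives, by monotonicity of the Hadamard product (which is immediate from the defining rational map in~\eqref{eq: hadamard product}: enlarging one factor enlarges the image, and Zariski closure preserves inclusions), the chain
$$p\star W \subseteq V'\star W \subseteq V\star W.$$
Second, I would apply the proof of Theorem~\ref{mainresult1} directly to the point $p$: since $p\in V$ and $p_0\cdots p_n\neq 0$, the relations $a_ip^{\alpha_i} = b_ip^{\beta_i}$ hold, and the Hadamard transformation computation via Theorem~\ref{generatingset} shows that
$$\mathbb{I}(p\star W) = \langle a_1c_1X^{\alpha_1}-b_1d_1X^{\beta_1},\ \ldots,\ a_sc_sX^{\alpha_s}-b_sd_sX^{\beta_s}\rangle = \mathbb{I}(V\star W),$$
where the last equality is exactly~\eqref{samebinomial} from Theorem~\ref{mainresult1} (whose hypotheses are satisfied, since $V$ itself contains the no-zero-coordinate point $p$). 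Thus the two outer terms of the displayed chain of inclusions have the same defining ideal, hence are equal as varieties; the squeeze then forces $V'\star W$ to coincide with both.

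The main (and essentially only) obstacle is a bookkeeping one: making sure that the hypotheses of Theorem~\ref{mainresult1} genuinely apply here. The key point is that Definition~\ref{definition: same exponents} refers to the ideals $\mathbb{I}(V)$ and $\mathbb{I}(W)$, not to $\mathbb{I}(V')$ — we are \emph{not} assuming $V'$ is a binomial variety or that it shares exponents with $W$ — so all applications of earlier results must be made to the pair $(V,W)$ and to the single point $p$, never to $V'$ itself. Once that is kept straight, there is nothing further to prove: $V'$ enters only through the trivial inclusions, and monotonicity plus the equality $\mathbb{I}(p\star W) = \mathbb{I}(V\star W)$ do all the work. I would close by noting that this recovers Theorem~\ref{maintheorem} of the introduction verbatim.
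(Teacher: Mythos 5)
Your proof is correct and matches the paper's argument exactly: both establish the chain $p\star W\subseteq V'\star W\subseteq V\star W$ from $p\in V'\subseteq V$, and then invoke the equality $\mathbb{I}(p\star W)=\mathbb{I}(V\star W)$ established in the proof of Theorem~\ref{mainresult1} to squeeze the middle term. Your added remark that all hypotheses are applied to the pair $(V,W)$ and the point $p$, never to $V'$, is a correct and worthwhile clarification.
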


\begin{proof}
Since $p \in V' \subseteq V$, it clearly
follows that $p\star W \subseteq V' \star W
\subseteq V \star W$.  Consequently,
$\mathbb{I}(p\star W) \supseteq  \mathbb{I}(V' \star W)
\supseteq \mathbb{I}(V \star W)$.  On the other hand,
the proof of Theorem \ref{mainresult1}
showed that $\mathbb{I}(p\star W) = \mathbb{I}(V \star W)$.
The desired conclusion now follows.
\end{proof}

Using Theorem \ref{generatingset} and some additional hypotheses, we can also give 
a Gr\"obner basis for $\mathbb{I}(V \star W)$. 

\begin{corollary}
Let $V$ and $W$ be binomial varieties
of $\mathbb{P}^n$. Assume that $V$ and $W$ have the same binomial
exponents.  Suppose that $V$ contains
a point
$p = [p_0:\cdots:p_n]$
with $p_0\cdots p_n \neq 0$.  If  
$$\mathbb{I}(V) = \langle a_1X^{\alpha_1} -b_1X^{\beta_1},
a_2X^{\alpha_2} - b_2X^{\beta_2},\ldots, a_sX^{\alpha_s}-b_sX^{\beta_s} \rangle$$
and if
$$\{c_1X^{\alpha_1} -d_1X^{\beta_1},
c_2X^{\alpha_2} - d_2X^{\beta_2},\ldots, c_sX^{\alpha_s}-d_sX^{\beta_s} \}$$
is a Gr\"obner basis of $\mathbb{I}(W)$
with respect to some monomial
order $<$, then
$$\{ a_1c_1X^{\alpha_1} -b_1d_1X^{\beta_1},
a_2c_2X^{\alpha_2} - b_2d_2X^{\beta_2},\ldots, a_sc_sX^{\alpha_s}-b_sd_sX^{\beta_s} \}$$
is a Gr\"obner basis of $\mathbb{I}(V\star W)$ with respect to the monomial order
$<$.
\end{corollary}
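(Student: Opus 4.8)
The plan is to combine the explicit description of $\mathbb{I}(V\star W)$ from Theorem~\ref{mainresult1} with the Gr\"obner basis transfer statement of Theorem~\ref{generatingset}. First I would invoke Theorem~\ref{mainresult1}: since $V$ and $W$ are binomial varieties with the same binomial exponents and $V$ contains a point $p$ with no zero coordinates, we have the equality
$$\mathbb{I}(V\star W) = \langle a_1c_1X^{\alpha_1} - b_1d_1X^{\beta_1}, \ldots, a_sc_sX^{\alpha_s} - b_sd_sX^{\beta_s}\rangle.$$
So the generating set in question does indeed generate $\mathbb{I}(V\star W)$; the only thing left to check is that it is in fact a Gr\"obner basis with respect to the given monomial order $<$.

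Next I would use the proof of Theorem~\ref{mainresult1} more carefully, not just its statement: there it was shown that $\mathbb{I}(V\star W) = \mathbb{I}(p\star W)$, and moreover that the Hadamard transformation of the given generating set of $\mathbb{I}(W)$ by $p$, after clearing denominators, produces exactly the set $\{a_1c_1X^{\alpha_1} - b_1d_1X^{\beta_1}, \ldots, a_sc_sX^{\alpha_s} - b_sd_sX^{\beta_s}\}$. More precisely, using $p^{\alpha_i} = (b_i/a_i)p^{\beta_i}$, one has $p^{\alpha_i}(c_iX^{\alpha_i} - d_iX^{\beta_i})^{\star p} = c_iX^{\alpha_i} - (d_i p^{\alpha_i}/p^{\beta_i})X^{\beta_i} = c_iX^{\alpha_i} - (b_id_i/a_i)X^{\beta_i}$, which is the scalar multiple $a_i^{-1}(a_ic_iX^{\alpha_i} - b_id_iX^{\beta_i})$. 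Since scaling a polynomial by a nonzero constant changes neither its leading term nor the ideal it contributes to, the set $\{a_ic_iX^{\alpha_i} - b_id_iX^{\beta_i}\}_{i=1}^s$ differs from $\{(c_iX^{\alpha_i} - d_iX^{\beta_i})^{\star p}\}_{i=1}^s$ only by nonzero scalar multiples on each generator.

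Now I would apply the second half of Theorem~\ref{generatingset}: since $\{c_iX^{\alpha_i} - d_iX^{\beta_i}\}_{i=1}^s$ is a Gr\"obner basis of $\mathbb{I}(W)$ with respect to $<$, the transformed set $\{(c_iX^{\alpha_i} - d_iX^{\beta_i})^{\star p}\}_{i=1}^s$ is a Gr\"obner basis of $\mathbb{I}(p\star W) = \mathbb{I}(V\star W)$ with respect to the same order $<$. Finally, rescaling each element of a Gr\"obner basis by a nonzero constant yields another Gr\"obner basis (the leading terms are unchanged, and the generated ideal is unchanged), so $\{a_ic_iX^{\alpha_i} - b_id_iX^{\beta_i}\}_{i=1}^s$ is a Gr\"obner basis of $\mathbb{I}(V\star W)$ with respect to $<$, as claimed.

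I do not anticipate a serious obstacle here: the corollary is essentially a bookkeeping combination of two already-established results, and the only subtlety is making sure that the hypothesis on $W$ (a Gr\"obner basis, not necessarily reduced) is compatible with the scalar bookkeeping — which it is, since the multiplicative constants $a_i^{-1}$ are nonzero. One should be mildly careful to note that the identity $a_ip^{\alpha_i} = b_ip^{\beta_i}$ requires $p\in V$ (not merely $p\in V'$), which holds by hypothesis, and that $p$ having no zero coordinates is exactly what is needed for the Hadamard transformation to be defined.
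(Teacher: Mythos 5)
Your proposal is correct and follows essentially the same route as the paper's proof: apply Theorem~\ref{generatingset} to get that the Hadamard transforms of the given Gr\"obner basis of $\mathbb{I}(W)$ form a Gr\"obner basis of $\mathbb{I}(p\star W)=\mathbb{I}(V\star W)$, and then observe (as the paper does, citing the computation inside the proof of Theorem~\ref{mainresult1}) that each $a_ic_iX^{\alpha_i}-b_id_iX^{\beta_i}$ is a nonzero scalar multiple of $(c_iX^{\alpha_i}-d_iX^{\beta_i})^{\star p}$, so the rescaled set is still a Gr\"obner basis. Your explicit verification of the scalar $a_i^{-1}$ is a nice touch but does not change the argument.
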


\begin{proof}
First note that for any Gr\"obner basis $\{f_1,\ldots,f_s\}$ of an ideal $I$,
the set $\{e_1f_1,\ldots,e_sf_s\}$ is also a Gr\"obner basis for $I$ with the
same monomial order if $e_1,\ldots,e_s$ are nonzero elements of $k$.

By Theorem \ref{generatingset}, the set 
$$\{(c_1X^{\alpha_1} -d_1X^{\beta_1})^{\star p},
(c_2X^{\alpha_2} - d_2X^{\beta_2})^{\star p},\ldots,
(c_sX^{\alpha_s}-d_sX^{\beta_s})^{\star p} \} $$
is a Gr\"obner basis of $\mathbb{I}(p \star W)$.  
Since $\mathbb{I}(p\star W) =\mathbb{I}(V\star W)$ by
Corollary \ref{productandpoint}, the result
now follows from Theorem \ref{mainresult1}, the proof of which shows that
$a_ic_iX^{\alpha_i} - b_id_iX^{\beta_i}$ is a nonzero scalar multiple of
$ (c_iX^{\alpha_i} - d_iX^{\beta_i})^{\star p}$
for $i =1,\ldots,s$.
\end{proof}


\section{Applications of main results}

In this section we record several consequences
of the results of the previous section. Before proceeding, we first prove 
a preliminary lemma related to the hypothesis (repeatedly invoked in the results of the previous section) that there exists 
a point $p = [p_0:\cdots:p_n]$ in $V$ 
with $p_0\cdots p_n \neq 0$.

\begin{lemma}\label{no_monomial}
Let $V$ be a binomial variety in $\mathbb{P}^n$.  If
there is a point $p = [p_0:\cdots:p_n] \in V$ such
that $p_0\cdots p_n \neq 0$, then $\mathbb{I}(V)$ contains no monomial.
\end{lemma}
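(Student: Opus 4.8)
The plan is to argue by contradiction. Suppose $\mathbb{I}(V)$ contains a nonzero monomial $X^\alpha = x_0^{\alpha_0}\cdots x_n^{\alpha_n}$. Since $V$ is nonempty (it contains the point $p$), $\mathbb{I}(V)$ is a proper ideal, so $X^\alpha$ is a nonconstant monomial; in particular at least one exponent $\alpha_i$ is positive. Now evaluate: because $p \in V = \mathbb{V}(\mathbb{I}(V))$, every element of $\mathbb{I}(V)$ vanishes at $p$, so in particular $p^\alpha = p_0^{\alpha_0}\cdots p_n^{\alpha_n} = 0$. But this forces $p_j = 0$ for some $j$ with $\alpha_j > 0$, contradicting the hypothesis $p_0\cdots p_n \neq 0$.

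More carefully, I would phrase this so as to be robust against the (trivial) edge cases: a monomial in $\mathbb{I}(V)$ is by definition a polynomial of the form $a X^\alpha$ with $a \in k \setminus\{0\}$ (and since it is an element of a homogeneous ideal, one may take $\alpha \neq (0,\ldots,0)$, for otherwise $\mathbb{I}(V) = R$ and $V = \varnothing$, which is impossible as $p \in V$). Then $aX^\alpha$ vanishing at $p$ gives $a\, p^\alpha = 0$, hence $p^\alpha = 0$ since $a \neq 0$, hence $\prod_{i} p_i^{\alpha_i} = 0$. Since $k$ is a field (hence an integral domain), some factor $p_i^{\alpha_i}$ with $\alpha_i \geq 1$ must be zero, so $p_i = 0$, contradicting $p_0 \cdots p_n \neq 0$. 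This completes the proof.

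I do not expect any genuine obstacle here — the statement is essentially immediate from the definition of $\mathbb{V}(-)$ and the fact that a product of nonzero elements of a field is nonzero. The only thing to be slightly careful about is making explicit that "contains a monomial" should be read as "contains a nonzero monomial" (the zero polynomial is trivially in every ideal but is not what is meant), and noting why $\mathbb{I}(V) \neq R$; both are handled by invoking that $V \ni p$ is nonempty. Note that the hypothesis that $V$ is a binomial variety is not actually needed for this lemma — the argument works for any projective variety $V$ containing a point with no zero coordinates — but stating it for binomial varieties is harmless and matches how the lemma will be applied in the sequel.
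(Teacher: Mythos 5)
Your argument is correct and is essentially identical to the paper's proof: a monomial $X^\alpha \in \mathbb{I}(V)$ would force $p^\alpha = 0$, which is impossible since no coordinate of $p$ vanishes. The extra care about edge cases and the observation that the binomial hypothesis is unnecessary are fine but not needed.
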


\begin{proof}
If there exists a monomial $X^\alpha \in \mathbb{I}(V)$, 
then $p^\alpha =0$.  But this cannot happen since 
no coordinate of $p$ is zero.
\end{proof}

\subsection{Binomial hypersurfaces and
potent binomial varieties}
The results of Section 3 allow us to recover some
results of Bocci and Carlini \cite{BC}.

In the statement below, a projective variety $V \subseteq 
\mathbb{P}^n$ is a \emph{binomial
hypersurface} if $\mathbb{I}(V)$ is generated by
a single irreducible binomial.  The following result
was first proved by Bocci and Carlini. We can now obtain it as a consequence of our Theorem \ref{mainresult1}.

\begin{theorem}
[{\cite[Proposition 4.5]{BC}}]
\label{thm:binomialhypersurface}
Let $V$ and $W$ be binomial
hypersurfaces of $\mathbb{P}^n$, with
$\mathbb{I}(V)  = \langle a X^{\alpha} - bX^{\beta})$
and $\mathbb{I}(W) = 
\langle c X^{\alpha} - d X^{\beta})$, where $a,b,c,d$ are nonzero. 
Then the Hadamard product $V \star W$ is also a binomial hypersurface 
with defining ideal
$$\mathbb{I}(V \star W) = \langle
ac X^{\alpha} - bd X^{\beta}
\rangle.$$
\end{theorem}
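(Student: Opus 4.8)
The plan is to derive Theorem~\ref{thm:binomialhypersurface} as essentially an immediate special case of Theorem~\ref{mainresult1}, with the only real work being to verify that the hypotheses of Theorem~\ref{mainresult1} are met. Both $V$ and $W$ are binomial varieties whose defining ideals are generated by a single binomial each, $aX^\alpha - bX^\beta$ and $cX^\alpha - dX^\beta$, and since these share the exponent pair $(\alpha,\beta)$, the varieties $V$ and $W$ have the same binomial exponents in the sense of Definition~\ref{definition: same exponents} (here $s=1$). So once the point-with-nonzero-coordinates hypothesis is checked, Theorem~\ref{mainresult1} applies verbatim and yields $\mathbb{I}(V\star W) = \langle acX^\alpha - bdX^\beta\rangle$, and the fact that this is again a binomial hypersurface (i.e.\ that $acX^\alpha - bdX^\beta$ is irreducible) follows because $V$ and $W$ being binomial hypersurfaces forces $aX^\alpha - bX^\beta$ to be irreducible, and irreducibility of a binomial depends only on the exponent vectors $\alpha,\beta$ (concretely, on $\gcd$-type conditions on $\alpha-\beta$), not on the nonzero scalar coefficients.

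The step that needs genuine attention is producing a point $p = [p_0:\cdots:p_n] \in V$ with $p_0\cdots p_n \neq 0$. First I would invoke Lemma~\ref{no_monomial} in contrapositive spirit: since $\mathbb{I}(V) = \langle aX^\alpha - bX^\beta\rangle$ with $a,b$ both nonzero and $\alpha \neq \beta$, this ideal is a principal ideal generated by an honest binomial and contains no monomial, so nothing obstructs the existence of such a point on general grounds — but I still need to exhibit one. Writing $\delta = \alpha - \beta \in \mathbb{Z}^{n+1}$ (with possibly negative entries), a point with all coordinates nonzero lies on $V$ precisely when $p^{\alpha} = (b/a)\,p^{\beta}$, equivalently $p^{\delta} = b/a$. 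Since $k$ is algebraically closed (hence contains all roots of $b/a$) and we are free to choose $p_i$ arbitrarily among nonzero scalars, one can scale a single chosen coordinate to solve $p^\delta = b/a$: pick an index $i$ with $\delta_i \neq 0$ (such an index exists since $\delta \neq 0$), set all other nonzero-chosen coordinates to $1$, and solve $p_i^{\delta_i} = b/a$ for $p_i \neq 0$ using that $k$ is algebraically closed. This produces the required $p$.

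With $p \in V$ and $p_0\cdots p_n \neq 0$ in hand, the hypotheses of Theorem~\ref{mainresult1} are satisfied with $s=1$, $\alpha_1 = \alpha$, $\beta_1 = \beta$, $(a_1,b_1,c_1,d_1) = (a,b,c,d)$, and the theorem gives exactly
$$\mathbb{I}(V\star W) = \langle acX^\alpha - bdX^\beta\rangle.$$
It remains only to note that $acX^\alpha - bdX^\beta$ is an irreducible binomial so that $V\star W$ is a binomial hypersurface: irreducibility of $a'X^\alpha - b'X^\beta$ over $k$ for nonzero $a',b'$ is equivalent to a purely combinatorial condition on $(\alpha,\beta)$ — after cancelling the common monomial factor $X^{\min(\alpha,\beta)}$ coordinatewise, the reduced exponent difference must be primitive (its entries have gcd $1$) — which holds for $(a,b)$ by hypothesis on $V$ and hence holds for $(ac,bd)$ since the condition is insensitive to the nonzero coefficients. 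The main (and only mild) obstacle is the coordinate-scaling argument for the existence of $p$; everything else is bookkeeping that reduces the statement to Theorem~\ref{mainresult1}.
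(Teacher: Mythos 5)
Your proposal is correct and follows essentially the same route as the paper: both reduce the statement to Theorem~\ref{mainresult1} by exhibiting a point of $V$ with no zero coordinates, obtained by setting all but one coordinate to $1$ and extracting a suitable root of $b/a$ (the paper first uses irreducibility to note that $X^\alpha$ and $X^\beta$ have disjoint supports before choosing that coordinate), and both finish by observing that irreducibility of a binomial over the algebraically closed field $k$ is insensitive to its nonzero coefficients. The one cosmetic difference is in that last step: the paper proves the coefficient-insensitivity directly by an explicit rescaling of variables, whereas you assert (correctly, but without proof) an equivalent combinatorial irreducibility criterion; the rescaling substitution is the cleaner way to make your final sentence rigorous.
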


\begin{proof}
We first observe that since $aX^{\alpha} - bX^{\beta}$ is irreducible by assumption,
a variable $x_i$ can divide at most one of $X^{\alpha}$ or $X^{\beta}$.  Indeed,
if $x_i$ divided both $X^{\alpha}$ and $X^{\beta}$, then we can
factor out the $x_i$, contradicting the irreducibility of the binomial.
So, we can write the binomial $aX^\alpha - bX^\beta$ as
$$ax^{\alpha_{i_1}}_{i_1}\cdots x^{\alpha_{i_s}}_{i_s} - bx^{\beta_{j_1}}_{j_1}\cdots x^{\beta_{j_s}}_{j_s}  ~~\mbox{with
$\alpha_{i_k},\beta_{j_l }\geq 1$}.$$
and with $\{i_1,\ldots,i_s\} \cap \{j_1,\ldots,j_s\} = \emptyset$.   Without
loss of generality, we can assume that $i_1 = 0$, i.e., the binomial
is 
$$ax^{\alpha_0}_0x^{\alpha_{i_2}}_{i_2}\cdots x^{\alpha_{i_s}}_{i_s} - bx^{\beta_{j_1}}_{j_1}\cdots x^{\beta_{j_s}}_{j_s}.$$
Then the point 
$$p = \left[ \sqrt[\alpha_0]{\frac{b}{a}}:1:1:\cdots:1\right]$$
vanishes at this binomial. (Here we use  $\sqrt[\alpha_0]{\frac{b}{a}}$ to denote any choice of $\alpha_0^{\rm{th}}$ root of $\frac{b}{a}$.) Moreover, because this is a point
on $V$ with no zero coordinates, the fact that
$\mathbb{I}(V \star W) = \langle acX^\alpha - bdX^\beta \rangle $ then
follows from Theorem \ref{mainresult1}.

Finally, we verify that $acX^\alpha-bdX^\beta$ is irreducible.  In order to simplify our notation, we assume 
without loss of generality that 
$i_1 = 0$ and $j_1 = 1$, and thus
$$acX^\alpha-bdX^\beta = acx^{\alpha_0}_0x^{\alpha_{i_2}}_{i_2}\cdots x^{\alpha_{i_s}}_{i_s} - bdx^{\beta_1}_{1}x^{\beta_{j_2}}_{j_2}\cdots x^{\beta_{j_s}}_{j_s}.$$
If $ac X^\alpha  - bd X^\beta = f(x_0,\ldots,x_n)g(x_0,\ldots,x_n)$ was
reducible, then note that
$$aX^\alpha - bX^\beta = 
f\left(\frac{x_0}{\sqrt[\alpha_0]{c}},
\frac{x_1}{\sqrt[\beta_1]{d}},x_2,\ldots,
x_n\right)
g\left(\frac{x_0}{\sqrt[\alpha_0]{c}},
\frac{x_1}{\sqrt[\beta_1]{d}},x_2,\ldots,
x_n\right)$$
is also reducible, contradicting our assumptions.
\end{proof}

\begin{remark}
\cite[Proposition 4.7]{BC} shows that the converse holds; 
that is, if $V$ and $W$
are hypersurfaces such that $V\star W$ is also
a hypersurface, then $V$ and $W$ are binomial hypersurfaces
with the same binonomial exponents.    
\end{remark}

For $i = 1,\ldots,s$
let $\xi_i$ be a primitive $(t_i-1)$-th
root of unity, and let $1 \leq \epsilon_i
\leq t_i-1$.  Following \cite[Definition 6.1]{BC}, a binomial variety
$V \subseteq \mathbb{P}^n$ is
said to have \emph{type $[(t_1,\epsilon_1),
\ldots,(t_s,\epsilon_s)]$} if
$$\mathbb{I}(V) = 
\langle X^{\alpha_1} - \xi_1^{\epsilon_1}X^{\beta_1},
\ldots, X^{\alpha_s} - \xi_s^{\epsilon_s}X^{\beta_s}
\rangle.
$$
Binomial varieties of this form are 
potent under the Hadamard product, i.e., there exists
an integer $t$ such that $V^{\star t} = V$.
This result, first shown by
Bocci and Carlini, can also be obtained
by our Theorem \ref{mainresult1}.

\begin{theorem}[{\cite[Theorem 6.2]{BC}}]
\label{BCtype}
Let $V \subseteq \mathbb{P}^n$ be a binomial
variety of type $[(t_1,\epsilon_1),
\ldots,(t_s,\epsilon_s)]$.  Then
$V^{\star t} = V$ where 
$t -1 = {\rm lcm}\left(\frac{t_1}{{\rm gcd}(t_1,\epsilon_1)},\ldots,\frac{t_s}{{\rm gcd}(t_s,\epsilon_s)}\right)$.
\end{theorem}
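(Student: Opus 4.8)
The plan is to apply Theorem~\ref{mainresult1} iteratively. First I would observe that a binomial variety $V$ of type $[(t_1,\epsilon_1),\ldots,(t_s,\epsilon_s)]$ trivially has the same binomial exponents as itself, so by induction the Hadamard powers $V^{\star r}$ are all binomial varieties with the same binomial exponents as $V$, \emph{provided} the hypothesis of Theorem~\ref{mainresult1} is met at each stage. Thus the first key step is to produce, for each $r$, a point $p \in V$ (equivalently in $V^{\star(r-1)}$, once we know these coincide) with no zero coordinates, so that the theorem applies. As in the proof of Theorem~\ref{thm:binomialhypersurface}, one builds such a point coordinate by coordinate: because each generator $X^{\alpha_i} - \xi_i^{\epsilon_i}X^{\beta_i}$ comes from a type, the binomials are (or may be taken to be) in "no common variable" form, and one can choose coordinates to be suitable roots of the $\xi_i^{\epsilon_i}$'s with all remaining coordinates equal to $1$; since $\xi_i^{\epsilon_i}$ is a root of unity, these coordinates are nonzero. (If the generators share variables one first argues, as before, that we may reduce to the reduced/irreducible form; this is a minor point but should be checked.)

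The second key step is the bookkeeping of coefficients under repeated application of~\eqref{samebinomial}. Starting from $\mathbb{I}(V) = \langle X^{\alpha_i} - \xi_i^{\epsilon_i}X^{\beta_i} : i=1,\ldots,s\rangle$, Theorem~\ref{mainresult1} gives
$$\mathbb{I}(V^{\star 2}) = \langle X^{\alpha_i} - \xi_i^{2\epsilon_i}X^{\beta_i} : i = 1,\ldots,s\rangle,$$
and inductively $\mathbb{I}(V^{\star r}) = \langle X^{\alpha_i} - \xi_i^{r\epsilon_i}X^{\beta_i} : i=1,\ldots,s\rangle$, where I use the convention $V^{\star 0} = [1:\cdots:1]$ so that $V^{\star 1} = V$ as the base case. (The coefficient of the first monomial stays $1$ since $1\cdot 1 = 1$; only the $\xi_i^{\epsilon_i}$-coefficient is raised to successive powers.) Therefore $V^{\star t} = V$ holds precisely when $\xi_i^{t\epsilon_i} = \xi_i^{\epsilon_i}$ for all $i$, i.e., $\xi_i^{(t-1)\epsilon_i} = 1$ for all $i$. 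Since $\xi_i$ is a primitive $(t_i-1)$-th root of unity, this is equivalent to $(t_i-1) \mid (t-1)\epsilon_i$, i.e., $\frac{t_i-1}{\gcd(t_i-1,\epsilon_i)} \mid (t-1)$ for all $i$; the smallest such $t-1$ is the stated least common multiple, so the claimed value of $t$ works. (I note the statement as written has "$t_i$" where the primitivity order "$t_i-1$" would be expected; I would either match the excerpt's convention or flag that the exponent appearing in the lcm should be the order of $\xi_i$; this is the only substantive discrepancy to resolve.)

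The main obstacle is the verification that the no-zero-coordinate hypothesis of Theorem~\ref{mainresult1} is genuinely available at every step of the induction. Concretely: we need a single point $p$ with $p_0\cdots p_n \neq 0$ lying on $V$ (hence on $V^{\star(r-1)}$ once we have identified them). The construction of such a $p$ requires knowing that the binomials defining $V$ can be arranged so that no variable divides both monomials of a generator simultaneously across the whole system in a way that forces some coordinate to vanish — this is exactly the kind of argument carried out in the proof of Theorem~\ref{thm:binomialhypersurface}, and here it must be done for a system of $s$ binomials rather than one. Once such a $p$ is exhibited, everything else is the routine coefficient computation above together with the cyclic-group divisibility argument to pin down the exponent $t$.

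\begin{proof}
We argue by induction that, for all $r \geq 1$,
$$\mathbb{I}(V^{\star r}) = \langle X^{\alpha_1} - \xi_1^{r\epsilon_1}X^{\beta_1}, \ldots, X^{\alpha_s} - \xi_s^{r\epsilon_s}X^{\beta_s} \rangle. $$
The case $r = 1$ is the definition of the type of $V$. For the inductive step, assume the formula holds for $r$. First we exhibit a point of $V^{\star r}$ with no zero coordinates. Since each $\xi_i^{\epsilon_i}$ is a root of unity (in particular nonzero), and since the binomials $X^{\alpha_i} - \xi_i^{\epsilon_i}X^{\beta_i}$ may be taken in the form where no variable divides both $X^{\alpha_i}$ and $X^{\beta_i}$ (arguing as in the proof of Theorem~\ref{thm:binomialhypersurface}), we may choose, coordinate by coordinate, a point $p = [p_0:\cdots:p_n]$ with all $p_j \neq 0$ satisfying $p^{\alpha_i} = \xi_i^{\epsilon_i} p^{\beta_i}$ for all $i$; such a $p$ lies on $V$. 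Since $V^{\star r}$ and $V$ are binomial varieties with the same binomial exponents, Theorem~\ref{mainresult1} (applied with $V^{\star r}$ in place of $W$, using that $p \in V$ has no zero coordinate) gives
$$\mathbb{I}(V^{\star(r+1)}) = \mathbb{I}(V \star V^{\star r}) = \langle 1\cdot X^{\alpha_1} - \xi_1^{\epsilon_1}\xi_1^{r\epsilon_1}X^{\beta_1}, \ldots, 1\cdot X^{\alpha_s} - \xi_s^{\epsilon_s}\xi_s^{r\epsilon_s}X^{\beta_s} \rangle, $$
which is the asserted formula for $r+1$. This completes the induction.

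Now set $t - 1 = {\rm lcm}\!\left(\frac{t_1}{{\rm gcd}(t_1,\epsilon_1)}, \ldots, \frac{t_s}{{\rm gcd}(t_s,\epsilon_s)}\right)$. By the formula just proved, $V^{\star t} = V$ if and only if $\xi_i^{t\epsilon_i} = \xi_i^{\epsilon_i}$, i.e.\ $\xi_i^{(t-1)\epsilon_i} = 1$, for all $i = 1,\ldots,s$. As $\xi_i$ has order $t_i$, this holds precisely when $t_i \mid (t-1)\epsilon_i$, equivalently $\frac{t_i}{{\rm gcd}(t_i,\epsilon_i)} \mid (t-1)$, for all $i$. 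Our choice of $t-1$ as the least common multiple of these quantities satisfies all of these divisibility conditions simultaneously, so $V^{\star t} = V$, as claimed.
\end{proof}
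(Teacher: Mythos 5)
Your proof takes essentially the same route as the paper's: iterate Theorem~\ref{mainresult1} so that the coefficient $\xi_i^{\epsilon_i}$ gets raised to the $t$-th power, then observe that $(\xi_i^{\epsilon_i})^t = \xi_i^{\epsilon_i}$ for the stated $t$. The paper's own proof is a three-line version of yours and does not address either of the two issues you flagged: it silently assumes the existence of a point of $V$ with no zero coordinates (needed to invoke Theorem~\ref{mainresult1}), and it uses the ${\rm lcm}$ formula with $t_i$ even though $\xi_i$ is declared to be a primitive $(t_i-1)$-th root of unity, so the divisibility check only goes through under the order-$t_i$ convention you adopted at the end. Both concerns are therefore genuine loose ends in the source rather than defects of your argument, though your ``coordinate by coordinate'' construction of the torus point for a system of $s$ binomials would still need to be carried out in detail to make the hypothesis verification airtight.
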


\begin{proof}
Note that for each $(t_i-1)$-th 
primitive root $\xi_i$,  we have $(\xi_i^{\epsilon_i})^t =
\xi_i^{\epsilon_i}$.  Thus, by
Theorem \ref{mainresult1} we have
$$
\mathbb{I}(V^{\star t}) =
\underbrace{\mathbb{I}(V) \star \cdots 
\star \mathbb{I}(V)}_t =
\langle X^{\alpha_1} - (\xi_1^{\epsilon_1})^t X^{\beta_1},
\ldots, X^{\alpha_s} - (\xi_s^{\epsilon_s})^t X^{\beta_s}\rangle =
\mathbb{I}(V).
$$
The conclusion $V^{\star t} =V$ now
follows.
\end{proof}
\subsection{Toric varieties and toric ideals of graphs}

We now turn our attention to a special type of binomial ideal. By \cite[Theorem 3.4]{HHO}
an ideal $I$ is a \emph{toric ideal}
if $I$ is a prime ideal and generated by 
binomials of the form $X^\alpha - X^\beta$, i.e.,
one monomial term has coefficient $1$ and the other monomial term has coefficient $-1$. Following Sturmfels \cite[Chapter 4]{S},
if a toric ideal $I$ is also homogeneous, then the
variety $\mathbb{V}(I)$ is a \emph{projective toric
variety}. As Sturmfels notes (see pg. 31 of \cite{S}), some definitions of projective
toric varieties also require the variety to be normal.  
In this paper, we do not require $\mathbb{V}(I)$ to be normal, only that
$I$ is a prime ideal generated by binomials of a certain form.

Our main results in Section~\ref{sec: binomial main} allow us to 
recover a result of  Friedenberg, Oneto, and Williams
\cite[Proposition 4.7]{FOW} on toric varieties; in fact, our theorem below is slightly more general than the statement in \cite{FOW}, which deals only with the special case $V'=V$.

\begin{theorem}
\label{toricsquare}
Suppose that $V \subseteq \mathbb{P}^n$ is a toric variety
and $V' \subseteq V$ is a subvariety
with a point $p = [p_0:\cdots:p_n] \in V'$
with $p_0\cdots p_n \neq 0$.
Then
$V' \star V = V$. In particular, in the case $V'=V$, then $V^{\star 2} = V$.
\end{theorem}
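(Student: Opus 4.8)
The plan is to deduce Theorem~\ref{toricsquare} directly from Corollary~\ref{productandpoint}, with only a small amount of bookkeeping needed to reconcile the hypotheses. First I would observe that a toric variety $V$ is in particular a binomial variety: by the definition recalled just before the theorem, $\mathbb{I}(V)$ is prime and generated by binomials of the form $X^{\alpha_i} - X^{\beta_i}$, which are homogeneous (since $V \subseteq \mathbb{P}^n$ is a projective toric variety), so $\mathbb{I}(V)$ is a homogeneous binomial ideal. Next I would note the trivial but essential point, already remarked after Definition~\ref{definition: same exponents}, that every binomial variety has the same binomial exponents as itself; so $V$ and $W := V$ have the same binomial exponents in the sense required by Corollary~\ref{productandpoint}.

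With these two observations in hand, I would apply Corollary~\ref{productandpoint} with $W = V$ and the given subvariety $V' \subseteq V$ containing the point $p = [p_0:\cdots:p_n]$ with $p_0\cdots p_n \neq 0$. The corollary yields
$$p \star V = V' \star V = V \star V.$$
It then remains only to identify $V \star V$ with $V$. For this I would invoke Theorem~\ref{mainresult1} (whose hypotheses are satisfied: $V$ and $V$ are binomial varieties with the same binomial exponents, and $V$ contains the point $p$ with no zero coordinates): writing $\mathbb{I}(V) = \langle X^{\alpha_1} - X^{\beta_1}, \ldots, X^{\alpha_s} - X^{\beta_s}\rangle$, so that the coefficients are $a_i = c_i = 1$ and $b_i = d_i = -1$, equation~\eqref{samebinomial} gives
$$\mathbb{I}(V \star V) = \langle X^{\alpha_1} - X^{\beta_1}, \ldots, X^{\alpha_s} - X^{\beta_s}\rangle = \mathbb{I}(V),$$
since $a_ic_i = 1$ and $b_id_i = 1$ for all $i$. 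Hence $V \star V = V$, and combining with the chain of equalities above gives $V' \star V = V$. The special case $V' = V$ then immediately yields $V^{\star 2} = V \star V = V$, and since $V$ is a toric variety it contains at least one point with no zero coordinates (e.g.\ the image of the dense torus, or more concretely the point $[1:\cdots:1]$, which lies on $V$ because every generator $X^{\alpha_i} - X^{\beta_i}$ vanishes there), so the hypothesis with $V' = V$ is automatically met.

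I do not anticipate a genuine obstacle here; the theorem is essentially a repackaging of Corollary~\ref{productandpoint} together with the elementary fact that a toric ideal is fixed under the coefficient multiplication $(1,-1)\cdot(1,-1) = (1,-1)$ appearing in Theorem~\ref{mainresult1}. The only point requiring a modicum of care is making sure the hypothesis ``$V'$ contains a point with no zero coordinates'' is exactly what Corollary~\ref{productandpoint} asks for — it is — and, in the special case $V' = V$, remarking that this hypothesis is automatic for toric varieties. If one wanted the cleanest exposition, the entire proof is: apply Corollary~\ref{productandpoint} with $W = V$, then note $\mathbb{I}(V \star V) = \mathbb{I}(V)$ by Theorem~\ref{mainresult1} since the relevant coefficient products are all $1$.
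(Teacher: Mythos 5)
Your proposal is correct and follows essentially the same route as the paper: apply Corollary~\ref{productandpoint} with $W=V$ to get $V'\star V = V\star V$, then use Theorem~\ref{mainresult1} with all coefficient products equal to $1$ to conclude $V\star V = V$. One harmless bookkeeping slip: in the paper's convention $a_iX^{\alpha_i}-b_iX^{\beta_i}$, the toric generator $X^{\alpha_i}-X^{\beta_i}$ corresponds to $a_i=b_i=1$ (not $b_i=-1$), but since $b_id_i=1$ either way, your conclusion is unaffected.
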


\begin{proof}
Since $V$ is a toric variety, the defining ideal $\mathbb{I}(V)$ of $V$
has the form
$$\mathbb{I}(V) = \langle X^{\alpha_1}-X^{\beta_1},\ldots,
X^{\alpha_s} - X^{\beta_s} \rangle.$$
 Applying
Corollary \ref{productandpoint} in the case when $W$ is equal to $V$, we can conclude 
$V' \star V = V \star V$. 

To see that $V \star V = V$, note that 
$[1:\cdots:1] \in V$ since this point
vanishes at each generator of $\mathbb{I}(V)$. By Theorem \ref{mainresult1},  since $a_i = b_i =1$ for all
$i=1,\ldots,s$, we have $\mathbb{I}(V \star V) = \mathbb{I}(V)$, 
and consequently, $V \star V = V$.
\end{proof}

We next study \emph{toric ideals of graphs} (to be defined precisely in Definition~\ref{def: toric ideal of graph} below) and their associated varieties. These ideals are defined using the combinatorial data of a finite simple graph and are an actively studied topic in combinatorial commutative algebra (see, for example, \cite{HHO,V}).
It turns out that the varieties defined by toric ideals of graphs are examples of toric
varieties, i.e., toric ideals of graphs are prime binomial ideals whose generators are differences of monomials. This allows us to use the results we obtained above to prove properties about the Hadamard products of toric ideals of graphs (and their subgraphs). 

We briefly recall the relevant background. We begin with the definition of the combinatorial objects in question. 

\begin{definition}(Finite simple graph, subgraph)  
A \emph{finite simple graph} is an ordered pair $G=(V,E)$, where $V$ is the set of vertices, $V$ is finite, and $E\subseteq \{\{u,v\}\mid u,v \in V, u\neq v\}$ is the set of edges. Note that, by the definition of $E$, we implicitly assume that there are no edges of the form $\{v,v\}$ for a vertex $v \in V$, and that there cannot exist more than one edge between two fixed distinct vertices $u \neq v$. 
We say that a finite simple graph $H=(V_1,E_1)$ is a \emph{subgraph} of $G$ if $V_1 \subseteq V$ and $E_1 \subseteq E$. 
\end{definition}

To describe the generators of the toric ideal of a graph, we need
the notion of a walk.

\begin{definition}(Walk, closed walk, even walk)
Let $G=(V,E)$ be a finite simple graph. A \emph{walk} of $G$ is a sequence of edges $w = (e_1, e_2, \dots, e_m)$, where each $e_i=\{u_{i_1},u_{i_2}\}\in E$ and $u_{i_2}=u_{{(i+1)}_1} \in V$ for each $i = 1, \dots, m - 1$. 
Equivalently, a walk is a sequence of vertices $(u_1, \dots,u_m,u_{m+1})$ such that $\{u_i,u_{i+1}\} \in E$ for all $i = 1,\dots,m$. A walk is \emph{even} if $m$ is even. A walk is \emph{closed} if $u_{m+1}=u_1$. 
\end{definition}

We are now ready to define the relevant ideals.  

\begin{definition}\label{def: toric ideal of graph}(Toric ideal of a graph)
Let $G=(V,E)$ be a finite simple graph. Suppose $\lvert V \rvert=n$ and denote $V=\{v_1,\dots,v_n\}$ and suppose $\lvert E \rvert=q$ and denote $E=\{e_1,\dots,e_q\}$.  Consider the following ring homomorphism, defined as \[
\varphi \colon k[e_1,\dots,e_q]\to k[v_1,\dots,v_n], \quad e_i \mapsto \varphi(e_i) := v_{i_1} v_{i_2} \quad \textup{ for all } e_i = \{v_{i_1}, v_{i_2}\}, 1 \leq i \leq q.
\] 
The \emph{toric ideal of $G$}, denoted $I_G$, is defined to be the kernel of the homomorphism $\varphi$.
\end{definition}

We defined a toric ideal at the beginning of this section as a prime ideal that is generated by certain binomials. In Definition~\ref{def: toric ideal of graph}, we define a certain ideal $I_G$ associated to $G$, and we call it a toric ideal. In order to show that this nomenclature is accurate, we need now to show that $I_G$ is prime and that it is generated by binomials of the form $X^\alpha - X^\beta$ with $\alpha \neq \beta$, i.e., the binomials have coefficient $1$. The next result, quoted from \cite{HHO}, is a key part of justifying this claim. More specifically, the result below shows that a generating set of $I_G$ can be described in terms
of the closed even walks of $G$, and furthermore,
these generators are binomials of the required form. We have the following.

\begin{theorem}[{\cite[Lemma 5.9]{HHO}}]
\label{toricgenerators}
Let $\Gamma=(e_{i_1},\dots,e_{i_{2m}})$ be a closed even walk of a finite simple graph $G$. Define the binomial \[f_\Gamma =\prod_{2 \nmid j} e_{i_j} -\prod_{2 \mid j} e_{i_j}.\]
Then the toric ideal $I_G$ is generated by all the binomials $f_\Gamma$, where $\Gamma$ is a closed even walk of $G$.
\end{theorem}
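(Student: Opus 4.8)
The statement to prove is \cite[Lemma 5.9]{HHO} (labeled Theorem~\ref{toricgenerators}): that $I_G$ is generated by the binomials $f_\Gamma$ arising from closed even walks $\Gamma$ of $G$. The plan is to prove the two inclusions separately. For the easy inclusion ($\langle f_\Gamma \rangle \subseteq I_G$), I would simply check that each $f_\Gamma$ lies in $\ker \varphi$: if $\Gamma = (e_{i_1},\dots,e_{i_{2m}})$ is a closed even walk, then applying $\varphi$ to $\prod_{2\nmid j} e_{i_j}$ and to $\prod_{2\mid j} e_{i_j}$ yields the same monomial in $k[v_1,\dots,v_n]$, because walking around a closed walk traverses each vertex an equal number of times when counted from the odd-indexed edges as from the even-indexed edges (the two ``colour classes'' of edges in the walk cover each vertex visit with the same total multiplicity). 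So $\varphi(f_\Gamma) = 0$.

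For the harder inclusion ($I_G \subseteq \langle f_\Gamma \rangle$), I would first invoke the general fact (e.g.\ from Sturmfels \cite{S}, or provable directly by a standard degree/term-order reduction argument) that the toric ideal $I_G = \ker\varphi$, being the kernel of a monomial map, is spanned as a $k$-vector space by the binomials $e^{\mathbf{a}} - e^{\mathbf{b}}$ where $\mathbf{a},\mathbf{b} \in \mathbb{N}^q$ satisfy $\varphi(e^{\mathbf{a}}) = \varphi(e^{\mathbf{b}})$; moreover one may reduce to the case where $\mathbf{a}$ and $\mathbf{b}$ have disjoint support. So it suffices to show that every such ``primitive'' binomial $e^{\mathbf a} - e^{\mathbf b}$ (with $\varphi(e^{\mathbf a}) = \varphi(e^{\mathbf b})$ and $\operatorname{supp}(\mathbf a)\cap\operatorname{supp}(\mathbf b)=\emptyset$) lies in $\langle f_\Gamma \rangle$. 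The key combinatorial step is then: given such a relation, the multiset of edges $\{e_i \text{ with multiplicity } a_i\} \cup \{e_i \text{ with multiplicity } b_i\}$, where the two parts are thought of as two colour classes, forms an even subgraph in which, at every vertex, the number of incident "$\mathbf a$-edges" (with multiplicity) equals the number of incident "$\mathbf b$-edges" (with multiplicity). Such a balanced bicoloured edge-multiset can be decomposed into closed walks that alternate, or at least can be peeled off one closed even walk at a time: one extracts a closed walk $\Gamma$ from the configuration, writes $e^{\mathbf a} - e^{\mathbf b}$ as a monomial multiple of $f_\Gamma$ plus a binomial supported on the smaller configuration, and induces on the total number of edges (counted with multiplicity).

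The main obstacle I anticipate is making the extraction-of-a-closed-even-walk step precise and genuinely inductive: one needs to argue that in any nonempty balanced bicoloured multigraph one can find a closed walk using edges alternately (or appropriately) from the two colour classes so that $f_\Gamma$ divides the difference of the two sides up to a monomial, and that after removing it the remaining configuration is still balanced (hence still corresponds to an element of $I_G$) and strictly smaller. A cautious route is to track carefully how cancellation works when $f_\Gamma$ shares variables with the leftover binomial, perhaps phrasing the induction on $\deg(e^{\mathbf a}) = \deg(e^{\mathbf b})$ and using that at each step one produces an expression $e^{\mathbf a} - e^{\mathbf b} = e^{\mathbf c} f_\Gamma + (e^{\mathbf a'} - e^{\mathbf b'})$ with the second term of strictly smaller degree and still in $I_G$. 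Since this is a quoted result from \cite{HHO}, in the paper itself it is reasonable to cite it rather than reproduce the full argument, but the above is the route I would take if a self-contained proof were wanted.
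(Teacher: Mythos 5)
The paper does not prove this statement; it is quoted verbatim from \cite[Lemma 5.9]{HHO}, so there is no in-paper argument to compare against, and your closing remark that citation is the appropriate treatment matches what the paper actually does. That said, your outline is the standard route and is sound as far as it goes. The easy inclusion is correct: writing $\Gamma$ as a vertex sequence $u_1,\dots,u_{2m},u_{2m+1}=u_1$ with $e_{i_j}=\{u_j,u_{j+1}\}$, both $\varphi\bigl(\prod_{2\nmid j}e_{i_j}\bigr)$ and $\varphi\bigl(\prod_{2\mid j}e_{i_j}\bigr)$ telescope to $\prod_{j=1}^{2m}v_{u_j}$, which is a cleaner justification than the ``colour classes cover each vertex visit equally'' phrasing. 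For the hard inclusion, the reduction to binomials $e^{\mathbf a}-e^{\mathbf b}$ with $\varphi(e^{\mathbf a})=\varphi(e^{\mathbf b})$ and disjoint supports is standard, and the observation that such a pair gives a vertex-balanced bicoloured edge multiset is correct. The genuine content, which you correctly identify but do not supply, is the extraction step: one walks alternately along $\mathbf a$-edges and $\mathbf b$-edges (possible at every vertex precisely because of the degree balance), and since only finitely many (vertex, colour) pairs exist, some pair recurs, yielding a colour-alternating (hence even) closed walk $\Gamma$. The remaining delicacy --- that an edge may occur in $\Gamma$ more often than its multiplicity in $\mathbf a$ or $\mathbf b$, so that $f_\Gamma$ need not literally divide off --- is exactly where the induction in \cite{HHO} does its work (one subtracts a suitable monomial multiple of $f_\Gamma$ and argues the leftover binomial still lies in $I_G$ with strictly smaller degree). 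So: no error, a correctly identified gap, and the right judgment that for the purposes of this paper the result should simply be cited.
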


In order to complete the reasoning that $I_G$ is a toric ideal, we should justify that $I_G$ is in fact prime. 
Indeed, $I_G$ is prime since it is the preimage of the zero ideal, which is a prime ideal in the integral domain 
$k[v_1,\ldots,v_n]$.

We can now derive a result on toric ideals of graphs (Theorem~\ref{toricidealresult}), using our Theorem~\ref{toricsquare} on general toric ideals.
Some preparatory remarks are necessary. Given $G$ a finite simple graph and $H$ a subgraph, let the set of edges of $G$ be $E = \{e_1,\ldots, e_q\}$ and let the subset of $E$ giving the edges of the subgraph $H$ be denoted $E' = \{e_{i_1}, \ldots, e_{i_r}\}$ for $e_{i_\ell}\in E$ for all $1 \leq \ell \leq r$. Definition~\ref{def: toric ideal of graph} constructs two ideals $I_G \subseteq k[e_1, \ldots, e_q]$ and $I_H \subseteq k[e_{i_1}, \ldots, e_{i_r}]$. It is clear that there is a natural inclusion $\Psi$ from the ambient ring $k[e_{i_1},\ldots,e_{i_r}]$ of $I_H$ into $k[e_1,\ldots, e_q]$, and hence we may consider the natural extension $I_H^e$ of $I_H$ to $k[e_1,\ldots,e_q]$, defined by $I_H^e := \langle \Psi(I_H) \rangle$, i.e.,  the ideal generated by the image of $I_H$ under $\Psi$. We have the following.

\begin{theorem}\label{toricidealresult}
Let $G$ be a finite simple graph with
edge set 
$E = \{e_1,\ldots,e_q\}$ and suppose that $H$ is a subgraph of $G$ with edge set
$E' = \{e_{i_1},\ldots,e_{i_r}\}$.  
Let $I_G \subseteq k[e_1,\ldots,e_q]$ be the toric ideal of $G$, and let $I_H \subseteq
k[e_{i_1},\ldots,e_{i_r}]$ denote
the toric ideal of $H$.  If $I_H^e$ is the extension of $I_H$ defined above, then $I_G \star I^e_H = I^e_H$.
\end{theorem}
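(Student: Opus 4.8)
The plan is to realize $I_G$ and $I_H^e$ as binomial ideals sitting in the same polynomial ring $R = k[e_1,\ldots,e_q]$ and then apply Theorem~\ref{toricsquare}. First I would observe that $I_H^e$ is itself a toric ideal in $R$: by Theorem~\ref{toricgenerators}, $I_H$ is generated inside $k[e_{i_1},\ldots,e_{i_r}]$ by the binomials $f_\Gamma = \prod_{2\nmid j} e_{i_j} - \prod_{2\mid j}e_{i_j}$ as $\Gamma$ ranges over closed even walks of $H$. Every closed even walk of $H$ is a closed even walk of $G$ (since $E'\subseteq E$), so under the inclusion $\Psi$ each such $f_\Gamma$ is carried to a generator of $I_G$ of exactly the same form. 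Hence $I_H^e$ is generated by a subset of the natural binomial generators of $I_G$; in particular $I_H^e \subseteq I_G$, and $I_H^e$ is a homogeneous ideal generated by differences of monomials, i.e. binomials with the same binomial exponents as the corresponding generators of $I_G$ (the coefficients are all $1$ on one side and $-1$ on the other).

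Next I would check that $V := \mathbb{V}(I_G)$ and $W := \mathbb{V}(I_H^e)$ satisfy the hypotheses of Corollary~\ref{productandpoint} with $V' = V$ and, in the notation there, $W$ playing the role of the second variety. The two ideals have the same binomial exponents only on the overlapping generators; to be careful I would instead argue directly. Since $I_H^e$ is prime (it is a preimage of the zero ideal in an integral domain, just as for $I_G$, or one can note it is the toric ideal of $H$ viewed in more variables) and generated by differences of monomials, $W$ is a toric variety in $\mathbb{P}^{q-1}$ in the sense defined in the paper. The point $p = [1:\cdots:1]$ lies on both $V$ and $W$ because every generator $X^\alpha - X^\beta$ vanishes there, and $p$ has no zero coordinates. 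Now $V \subseteq W$ as varieties (from $I_H^e \subseteq I_G$), so $V$ is a subvariety of the toric variety $W$ containing the coordinate-free point $p$.

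Applying Theorem~\ref{toricsquare} with the toric variety taken to be $W$ and its subvariety taken to be $V$, we conclude $V \star W = W$; passing back to ideals via Lemma~\ref{lemma: ideals and varieties equal} gives $I_G \star I_H^e = \mathbb{I}(V\star W) = \mathbb{I}(W) = I_H^e$, which is the claim. The main thing to get right is the bookkeeping in the first step: verifying that $I_H^e$ really is generated by differences of monomials (so that $W$ is genuinely toric in the paper's sense) and that the closed-even-walk generators of $I_H$ map to bona fide generators of $I_G$, so that $I_H^e \subseteq I_G$ and hence $V\subseteq W$. Everything after that is a direct citation of Theorem~\ref{toricsquare} and Lemma~\ref{lemma: ideals and varieties equal}; no new computation is needed.
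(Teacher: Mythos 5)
Your proposal is correct and follows essentially the same route as the paper: show $I_H^e$ is a toric ideal (so $\mathbb{V}(I_H^e)$ is a toric variety), use the closed-even-walk generators to get $I_H^e \subseteq I_G$ and hence $\mathbb{V}(I_G) \subseteq \mathbb{V}(I_H^e)$, check $[1:\cdots:1] \in \mathbb{V}(I_G)$, and then invoke Theorem~\ref{toricsquare} followed by Lemma~\ref{lemma: ideals and varieties equal} (using primeness to identify the ideals with the vanishing ideals of their varieties). The extra care you take in verifying that the walk generators of $I_H$ map to generators of $I_G$ matches the paper's argument exactly.
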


\begin{proof} 
We saw in the discussion above that $I_H$, and hence also $I_H^e$, is a toric ideal. Hence $\mathbb{V}(I_H^e)$ is a toric variety by definition.
Moreover, since 
every closed even walk in $H$ is also a closed even
walk of $G$, Theorem \ref{toricgenerators} implies
that $I^e_H \subseteq I_G$.   This in turn implies that $\mathbb{V}(I_G)$ is a subvariety of 
the toric variety $\mathbb{V}(I^e_H)$. 
From the form of the generators of $I_G$ given in Definition~\ref{def: toric ideal of graph}, the point $[1:\cdots:1]$ lies in the vanishing locus of every generator of $I_G$, so $[1:\cdots:1] \in \mathbb{V}(I_G)$.  By Theorem 
\ref{toricsquare}, we may conclude that $\mathbb{V}(I_G) \star \mathbb{V}(I^e_H) = \mathbb{V}(I^e_H)$.
Since toric ideals are prime and hence radical, we know $I_G = \mathbb{I}(\mathbb{V}(I_G))$ and similarly $I_H^e = \mathbb{I}(\mathbb{V}(I_H^e))$. Hence Lemma~\ref{lemma: ideals and varieties equal} implies $I_G \star I^e_H  = I^e_H$, as desired. 

\end{proof}

\subsection{Hilbert functions} 

Suppose that $V$ and $W$ are two binomial varieties in $\mathbb{P}^n$
with the same binomial exponents. Suppose in addition that both $V$ and $W$
contain at least one point with no zero coordinates.  The main result of this section is Theorem~\ref{mainresult3}, which shows that, in the setting just described, the three varieties $V$, $W$, and $V \star W$ 
all have the same Hilbert function, and consequently, also have the same
degree and dimension.

To prove Theorem~\ref{mainresult3}, we use the theory of monomial orders and Gr\"obner bases.  (We refer the reader to \cite{CLO} for terminology and background concerning Gr\"obner bases, monomial
orders, and initial ideals.) Specifically, we show in Proposition~\ref{leadingterm} below that, with respect to any monomial order $<$, the ideals $\mathbb{I}(V), \mathbb{I}(W)$ and 
$\mathbb{I}(V \star W)$ all have the same initial ideal. This is a key step in the argument for Theorem~\ref{mainresult3}. To place Proposition~\ref{leadingterm} in context, recall first that it is well-known (see e.g. \cite[Proposition 1.1]{ES}) that a Gr\"obner basis of a binomial 
ideal consists of binomials.
Our result below
refines this well-known statement by showing that, if $I$ and $J$ are binomial and have the same exponents, then their respective Gr\"obner bases (with respect to any monomial order) are not only binomial, but also have the same exponents. We have the following.  

\begin{proposition}\label{leadingterm}
Let $V$ and $W$ be two binomial varieties in $\mathbb{P}^n$
that have the same binomial exponents.  Suppose that both
$V$ and $W$ contain at least one point with no zero coordinates. Let $<$ be any monomial order on $R=k[x_0,x_1,\ldots,x_n]$. 
Then the initial ideals $LT_<(I)$ and $LT_<(J)$
of $I =\mathbb{I}(V) $ and $J = \mathbb{I}(W)$, respectively, are equal. That is, 
$$LT_<(I) = \langle LT_<(f) ~|~ f \in I \rangle = 
\langle LT_<(g) ~|~ g \in J \rangle = LT_<(J).$$
\end{proposition}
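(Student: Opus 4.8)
The plan is to exploit Buchberger's algorithm directly. Since $I=\mathbb{I}(V)$ and $J=\mathbb{I}(W)$ have the same binomial exponents, we may write
$$I = \langle a_1X^{\alpha_1}-b_1X^{\beta_1},\ldots,a_sX^{\alpha_s}-b_sX^{\beta_s}\rangle,\qquad J = \langle c_1X^{\alpha_1}-d_1X^{\beta_1},\ldots,c_sX^{\alpha_s}-d_sX^{\beta_s}\rangle,$$
with all coefficients nonzero. Fix a monomial order $<$. The key observation is that Buchberger's algorithm, run on a generating set of binomials with prescribed exponent pairs, produces new binomials whose exponent pairs depend only on the exponent pairs of the input binomials and on $<$, \emph{not} on the nonzero scalar coefficients. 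First I would make this precise: given two homogeneous binomials $g = aX^{\mu}-bX^{\nu}$ and $h = cX^{\sigma}-dX^{\tau}$ (coefficients nonzero, and say $X^\mu = LT_<(g)$, $X^\sigma = LT_<(h)$), the $S$-polynomial $S(g,h) = \frac{L}{X^\mu}g - \frac{a}{c}\cdot\frac{L}{X^\sigma}h$, where $L = \operatorname{lcm}(X^\mu,X^\sigma)$, is again a binomial of the form $a'X^{\mu'} - b'X^{\nu'}$ with nonzero coefficients, and the exponents $\mu' = \nu + (L/X^\mu)$ and $\nu' = \tau + (L/X^\sigma)$ (or the analogous monomials, possibly after cancellation) are determined purely by $\mu,\nu,\sigma,\tau$ and $<$. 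Likewise, reducing a binomial modulo a set of binomials, step by step, either clears it to $0$ or leaves a binomial whose exponent vectors are determined by the exponent data of the divisors, independent of scalars — here I use the standard fact (as in \cite[Proposition 1.1]{ES}) that the reduction process on binomials never introduces a third term.

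With that lemma in hand, I would run Buchberger's algorithm in parallel on the two generating sets, keeping track of the exponent pairs $(\mu_t,\nu_t)$ produced at each stage. Because the choice of which $S$-polynomial to process next, and the exponent bookkeeping, are identical for $I$ and $J$ (they see the same exponents $\{\alpha_i\}$, $\{\beta_i\}$ and the same order $<$), the algorithm terminates after the same number of steps for both, producing Gröbner bases
$$\mathcal{G}_I = \{a'_1X^{\mu_1}-b'_1X^{\nu_1},\ldots,a'_mX^{\mu_m}-b'_mX^{\nu_m}\},\qquad \mathcal{G}_J = \{c'_1X^{\mu_1}-d'_1X^{\nu_1},\ldots,c'_mX^{\mu_m}-d'_mX^{\nu_m}\}$$
with the \emph{same} exponent pairs and with $X^{\mu_t} = LT_<$ of each. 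One subtlety to address: a priori some $S$-polynomial might reduce to zero for $I$ but to a nonzero binomial for $J$ (if the coefficients happen to cancel in one case and not the other). This is exactly where the hypothesis that both $V$ and $W$ contain a point with no zero coordinates enters. By Lemma~\ref{no_monomial}, neither $I$ nor $J$ contains a monomial; hence every binomial produced during reduction, if it is nonzero, genuinely has two terms with nonzero coefficients, and — more to the point — a binomial $a X^{\mu} - b X^{\nu}$ with $\mu\neq\nu$ lies in a binomial ideal containing no monomial iff the monomials $X^\mu, X^\nu$ are "linked" by the ideal in a scalar-independent way. I would phrase this as: the reduction of $S(g,h)$ to zero is equivalent to a purely combinatorial/linear-algebraic condition on exponents (the exponent difference $\mu-\nu$ lying in the lattice spanned by the $\alpha_i-\beta_i$), together with the coefficient matching up; the no-monomial hypothesis forces the coefficients to match whenever the exponents do, so the two runs stay in lockstep.

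Given $\mathcal{G}_I$ and $\mathcal{G}_J$ as above, the leading terms are $LT_<(\mathcal{G}_I) = \{X^{\mu_1},\ldots,X^{\mu_m}\} = LT_<(\mathcal{G}_J)$, so $LT_<(I) = \langle X^{\mu_1},\ldots,X^{\mu_m}\rangle = LT_<(J)$, as desired. The main obstacle I anticipate is precisely the "stay in lockstep" claim — making rigorous that a scalar cancellation causing a zero reduction for $I$ must also occur for $J$. I expect this is handled cleanly by the no-monomial lemma: if the two runs ever diverged, one of them would at some stage produce a nonzero binomial that subsequently reduces to a monomial (the obstruction to further two-term reduction), contradicting Lemma~\ref{no_monomial}. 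An alternative, possibly cleaner, route that sidesteps the parallel-algorithm argument entirely: first prove $LT_<(I) = LT_<(\mathbb{I}(V\star W))$ using Theorem~\ref{reducedgb} (the reduced Gröbner basis of $\mathbb{I}(p\star V)$ has the same leading terms as that of $\mathbb{I}(V)$, since Hadamard transformation only rescales coefficients), combine with Theorem~\ref{mainresult1} which identifies $\mathbb{I}(V\star W)$ on the nose, and then run the same comparison with the roles of $V$ and $W$ interchanged to get $LT_<(J) = LT_<(\mathbb{I}(V\star W))$; transitivity then yields $LT_<(I) = LT_<(J)$. I would likely present this second route as the main proof, as it reuses machinery already established in the paper and avoids re-deriving Buchberger bookkeeping.
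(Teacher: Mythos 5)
Your proposal is correct, and the route you say you would actually present (the second one) is genuinely different from the paper's. The paper proves this proposition the way your \emph{first} sketch suggests: it runs Buchberger's algorithm in lockstep on the two generating sets (Appendix A), showing that each $S$-polynomial and each step of the division algorithm produces binomials with matching exponents, and it uses exactly Lemma~\ref{no_monomial} to rule out the divergence scenario you worry about, namely a scalar cancellation turning one run's output into a monomial while the other stays binomial. This costs the paper several pages of careful bookkeeping. Your alternative route sidesteps all of that: pick $q\in W$ with no zero coordinates; by Theorem~\ref{generatingset} (or \ref{reducedgb}) the Gr\"obner basis of $\mathbb{I}(q\star V)$ is obtained from that of $\mathbb{I}(V)$ by rescaling coefficients, so $LT_<(\mathbb{I}(q\star V))=LT_<(I)$; by Corollary~\ref{productandpoint} (with the roles of $V$ and $W$ swapped, using commutativity of $\star$ and symmetry of the same-exponents condition) $q\star V=W\star V=V\star W$, so $LT_<(\mathbb{I}(V\star W))=LT_<(I)$; the symmetric argument with $p\in V$ gives $LT_<(\mathbb{I}(V\star W))=LT_<(J)$, and transitivity finishes. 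I checked that this creates no circularity: Theorems~\ref{generatingset}, \ref{reducedgb}, \ref{mainresult1} and Corollary~\ref{productandpoint} are all established before Proposition~\ref{leadingterm} and do not depend on it. What each approach buys: yours is shorter and reuses established machinery, and as a bonus it proves $LT_<(\mathbb{I}(V\star W))=LT_<(I)=LT_<(J)$ directly (which is what Theorem~\ref{mainresult3} actually needs); the paper's Buchberger argument is self-contained at the level of Gr\"obner basics and yields the slightly finer structural fact that the two ideals admit Gr\"obner bases that are term-by-term matched binomials with identical exponents, not merely identical initial ideals. Your first sketch's appeal to ``the exponent difference lying in the lattice spanned by the $\alpha_i-\beta_i$'' is vaguer than what is needed and is not how the paper argues, but since you defer to the second route this does not affect the correctness of your proposal.
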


\begin{proof}
This is straightforward from Buchberger's Algorithm. However, it takes some effort to check all of the details; see Appendix \ref{appendix} for a careful proof.
\end{proof}

Using the above proposition, we can now deduce a relation between Hadamard products and Hilbert functions.  Recall that 
the \emph{Hilbert function} of a variety $V \subseteq \mathbb{P}^n$
is defined by
$$HF_V(i) := \dim_k (R/\mathbb{I}(V))_i 
~~\mbox{for all $i \in \mathbb{N}$} $$
where for any homogeneous ideal $I$, $(R/I)_i$ is the $i$-th degree piece of the $\mathbb{N}$-graded
ring $R/I = \bigoplus_{i \in \mathbb{N}} (R/I)_i.$

With the above result, and the fact that initial ideals yield the same Hilbert functions as the original ideal, we can now relate the Hilbert functions of $V,W,$ and $V \star W$ (when they satisfy the hypotheses of Proposition~\ref{leadingterm}).  Since the Hilbert function computes a number of well-known algebraic invariants of projective varieties, the result on Hilbert functions also gives immediate consequences relating these invariants. We have the following.

\begin{theorem}\label{mainresult3}
Let $V$ and $W$ be binomial varieties
of $\mathbb{P}^n$ that have the same binomial
exponents.  In addition,
suppose that $V$ contains a point 
$p = [p_0:\cdots:p_n]$
with $p_0\cdots p_n \neq 0$, and $W$ also contains a
point $q = [q_0:\cdots:q_n]$ with $q_0\cdots q_n \neq 0$. Let $HF_V, HF_W, HF_{V \star W}$ denote the Hilbert functions associated to $V,W,V \star W$ respectively. 
Then these Hilbert functions are all equal, i.e., 
$$HF_V(i) = HF_W(i) = HF_{V\star W}(i) ~~\mbox{for all 
$i \in \mathbb{N}$}.$$
In particular, $\deg(V) = \deg(W) = \deg(V\star W)$ and
$\dim(V) = \dim(W) = \dim(V\star W).$
\end{theorem}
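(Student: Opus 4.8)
The plan is to deduce the theorem from Proposition~\ref{leadingterm} together with two standard facts: that passing to an initial ideal does not change the Hilbert function, and that the dimension and degree of a projective variety are determined by its Hilbert function.

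First I would check that $V\star W$ also satisfies the hypotheses of Proposition~\ref{leadingterm}. Since $V$ contains a point $p$ with $p_0\cdots p_n\neq 0$, Theorem~\ref{mainresult1} shows that $V\star W$ is a binomial variety having the same binomial exponents as $V$ (and hence as $W$). Moreover, with $p\in V$ and $q\in W$ the given points with no zero coordinates, the point $p\star q=[p_0q_0:\cdots:p_nq_n]$ has every coordinate nonzero and lies in $V\star W$. Consequently each of the pairs $(V,W)$ and $(V,V\star W)$ consists of binomial varieties with the same binomial exponents, both members of which contain a point with no zero coordinates. Fixing any monomial order $<$ on $R$ and applying Proposition~\ref{leadingterm} to these two pairs gives $LT_<(\mathbb{I}(V))=LT_<(\mathbb{I}(W))=LT_<(\mathbb{I}(V\star W))$.

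Next I would invoke the standard fact (see e.g.\ \cite{CLO}) that for any homogeneous ideal $I\subseteq R$ and any monomial order $<$, the rings $R/I$ and $R/LT_<(I)$ have the same Hilbert function, because the monomials not lying in $LT_<(I)$ form a homogeneous $k$-basis of $R/I$. Applying this to $\mathbb{I}(V)$, $\mathbb{I}(W)$, $\mathbb{I}(V\star W)$ and using the equality of initial ideals just established yields $HF_V(i)=HF_W(i)=HF_{V\star W}(i)$ for all $i\in\mathbb{N}$. Since for $i\gg 0$ the Hilbert function of a projective variety agrees with its Hilbert polynomial, and since both the dimension and the degree of a projective variety are read off from that polynomial (the dimension being its degree, and the degree of the variety being $d!$ times its leading coefficient, where $d$ is the dimension), equality of Hilbert functions forces the asserted equalities $\dim(V)=\dim(W)=\dim(V\star W)$ and $\deg(V)=\deg(W)=\deg(V\star W)$.

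I expect no real obstacle here: the substantive content lives in Proposition~\ref{leadingterm}, whose proof is relegated to the appendix. The only steps needing a little care are verifying that $V\star W$ meets the hypotheses of that proposition — immediate from Theorem~\ref{mainresult1} together with the observation that $p\star q$ has no zero coordinate — and quoting the initial-ideal/Hilbert-function comparison in the correct generality.
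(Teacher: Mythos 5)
Your proposal is correct and follows essentially the same route as the paper: invoke Theorem~\ref{mainresult1} to see that $V\star W$ is binomial with the same exponents, note that $p\star q$ is a point of $V\star W$ with no zero coordinates, apply Proposition~\ref{leadingterm} to two pairs of varieties to equate the initial ideals, and then pass to Hilbert functions and the Hilbert polynomial. The only (immaterial) difference is that the paper applies the proposition to the pairs $(V,W)$ and $(W,V\star W)$ rather than $(V,W)$ and $(V,V\star W)$.
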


\begin{proof}
The statements about the degree and dimension follow from the
first statement.  Indeed, the first statement implies
that the Hilbert polynomials of
the rings $R/\mathbb{I}(V), R/\mathbb{I}(W)$ and $R/\mathbb{I}(V \star W)$ are all
equal, and thus the dimension and degrees must all be 
the same since these invariants are encoded into
the Hilbert polynomial (e.g., see 
\cite[Chap. 9, Sec. 3, Theorem 11]{CLO} and 
\cite[Chap. 9, Sec. 4, Exercise 12]{CLO}).

We now prove the statement about the Hilbert functions.
By Theorem \ref{mainresult1}, $\mathbb{I}(V\star W)$ has the same
binomial exponents as $\mathbb{I}(V)$ and $\mathbb{I}(W)$.  Furthermore,
since the Hadamard product of $p$ and $q$ is $p \star q = [p_0q_0:\cdots:p_nq_n]$, and the point
$p\star q$ is contained in $V \star W$, we conclude that $V \star W$ also contains a point with no zero coordinates.  
By Proposition~\ref{leadingterm} applied to the pair $\mathbb{I}(V)$ and $\mathbb{I}(W)$, as well as to the pair $\mathbb{I}(W)$ and $\mathbb{I}(V \star W)$, we can conclude that for any monomial order $<$,
we have
$$LT_<(\mathbb{I}(V)) = LT_<(\mathbb{I}(W)) = LT_<(\mathbb{I}
(V\star W)).$$
Finally, by \cite[Chap. 9, Sec. 3, Theorem 9]{CLO}, the Hilbert
functions  $HF_V, HF_W$ and $HF_{V\star W}$ must all be
 the same since their Hilbert functions are equal
 to the Hilbert functions of 
 $R/LT_<(\mathbb{I}(V)), R/LT_<(\mathbb{I}(W))$, and $R/LT_<(\mathbb{I}(V\star W))$, respectively. This completes the proof. 
\end{proof}


\section{The varieties of Hadamard transformations are binomial}

In the previous sections, we focused on the study of Hadamard products of the form
$V \star W$ where
at least one of the two varieties $V$ and $W$
is assumed to be a binomial variety.  In this 
last section, we shift our perspective, and study Hadamard products of an arbitrary projective variety $V$ with a point $p \in \mathbb{P}^n$. It will then turn out that a certain variety, defined in terms of Hadamard products involving the data from this $V$ and choice of point $p \in \mathbb{P}^n$, is a binomial variety.   

To make the discussion more precise, we need some terminology.  Let $V$ be any projective variety in $\mathbb{P}^n$ and let 
$p = [p_0:\cdots:p_n]\in \mathbb{P}^n$ denote a fixed point in $\mathbb{P}^n$ 
such that $p_0\cdots p_n \neq 0$. We
call $p \star V$ the 
\emph{Hadamard transformation of 
$V$ by $p$}.
Now consider the set
\begin{equation}\label{eq: def Psi V P}
\psi(V,p) := \{ q \in \mathbb{P}^n\, \mid \, q \star V  = p \star V\} \subseteq \mathbb{P}^n 
\end{equation} 
which is the set of points in $\mathbb{P}^n$ which yield the
same Hadamard product with $V$ as 
for $p$.  Note that the set in \eqref{eq: def Psi V P} may not be closed;  however, we can study
the ideal $\mathbb{I}(\psi(V,p))$, i.e., the ideal of polynomials that vanish on $\psi(V,p)$.  Below, we will show that, under some mild hypotheses on $V$,
the ideal $\mathbb{I}(\psi(V,p))$ is generated
by binomials. This in turn implies that the Zariski closure $\overline{\psi(V,p)}$ of $\psi(V,p)$ is a binomial variety.  Morever, it turns out that these binomial equations can be determined from a reduced Gr\"obner basis of $\mathbb{I}(V)$. For a precise statement, see Theorem~\ref{fiberofmap}.

We first require some preparatory lemmas.  As evident in the previous sections, care must be taken when taking the Hadamard transformation of a variety with a point $q$; in particular, we must pay attention to whether the point has some coordinates that are equal to $0$.  The next two lemmas will play
a role in handling this issue.  Recall that if $I$ and $J$ are ideals of a ring $R$, the ideal quotient $I:J$ is defined by 
$I:J = \{f \in R ~|~ fJ \subseteq I\}$.

\begin{lemma}\label{nonzeropoints}
Let $V \subseteq \mathbb{P}^n$ be a nonempty projective 
variety and $p =[p_0:\cdots:p_n] \in \mathbb{P}^n$ with   
$p_0\cdots p_n \neq 0$.
Suppose that  
 $\mathbb{I}(V):\langle x_0x_1\cdots x_n \rangle = \mathbb{I}(V)$.  
If $q =[q_0:\cdots:q_n] \in \psi(V,p)$, then
$q_0\cdots q_n \neq 0$.
\end{lemma}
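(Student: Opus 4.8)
The goal is to show that if $\mathbb{I}(V):\langle x_0\cdots x_n\rangle = \mathbb{I}(V)$ and $q \in \psi(V,p)$, then $q$ has no zero coordinates. The plan is to argue by contradiction: suppose some coordinate of $q$ vanishes, say the vanishing set of coordinates is $Z = \{i : q_i = 0\} \neq \emptyset$. The first step is to understand $q \star V$ in coordinate terms. Since $q \star v = [q_0 v_0 : \cdots : q_n v_n]$ for $v \in V$, every point of $q \star V$ (and hence every point of its closure) has $i$-th coordinate equal to $0$ for every $i \in Z$. Therefore $q \star V \subseteq \mathbb{V}(x_i : i \in Z)$, so the monomial $\prod_{i \in Z} x_i$ vanishes on $q \star V$, i.e. $\prod_{i \in Z} x_i \in \mathbb{I}(q \star V)$.

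The second step is to transport this back to $V$. Since $q \in \psi(V,p)$, we have $q \star V = p \star V$, and since $p_0 \cdots p_n \neq 0$, Theorem~\ref{generatingset} tells us how $\mathbb{I}(p\star V)$ is obtained from $\mathbb{I}(V)$ via the Hadamard transformation $f \mapsto f^{\star p}$. In particular the Hadamard transformation is invertible (transform by $p^{-1} := [p_0^{-1} : \cdots : p_n^{-1}]$), and it sends a monomial to a nonzero scalar multiple of the same monomial. So from $\prod_{i \in Z} x_i \in \mathbb{I}(p \star V) = \langle f_1^{\star p}, \ldots, f_s^{\star p}\rangle$ where $\mathbb{I}(V) = \langle f_1, \ldots, f_s\rangle$, applying the transformation by $p^{-1}$ — or more directly using that $\mathbb{I}(V) = (p^{-1}) \star (p \star V)$ has generators $(f_i^{\star p})^{\star p^{-1}}$ and that a monomial is fixed up to scalar — we conclude $\prod_{i \in Z} x_i \in \mathbb{I}(V)$.

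The third and final step is to derive the contradiction. We now know $\prod_{i \in Z} x_i \in \mathbb{I}(V)$. Since $\{0, \ldots, n\} \setminus Z$ may be nonempty, multiply by $\prod_{j \notin Z} x_j$: then $x_0 x_1 \cdots x_n = \left(\prod_{j \notin Z} x_j\right)\left(\prod_{i \in Z} x_i\right) \in \mathbb{I}(V)$ as well, so in fact $x_0 \cdots x_n \in \mathbb{I}(V)$ directly (this is the cleanest route, avoiding the need to phrase things with the ideal quotient). But then $p^{\alpha} = 0$ for $\alpha = (1,\ldots,1)$, contradicting $p_0 \cdots p_n \neq 0$ — wait, more carefully: $x_0 \cdots x_n \in \mathbb{I}(V)$ would force $V \subseteq \mathbb{V}(x_0 \cdots x_n)$, but this by itself is not a contradiction with the hypotheses unless we invoke $\mathbb{I}(V):\langle x_0\cdots x_n\rangle = \mathbb{I}(V)$. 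The correct use of the hypothesis: from $\prod_{i\in Z} x_i \in \mathbb{I}(V)$ and $Z \neq \emptyset$, pick any $i_0 \in Z$ and write $\prod_{i \in Z} x_i = x_{i_0} \cdot m$ where $m = \prod_{i \in Z, i \neq i_0} x_i$ is a monomial (possibly $1$). Then $m \cdot (x_0 \cdots x_n) = \left(\prod_{j} x_j \right) m$, and since $\prod_{i\in Z} x_i$ divides $m \cdot x_0 \cdots x_n$ appropriately... the cleanest argument: $\prod_{i\in Z} x_i \in \mathbb{I}(V)$ implies $\left(\prod_{i\in Z} x_i\right) \cdot \langle x_0\cdots x_n\rangle \subseteq \mathbb{I}(V)$, and since $\prod_{i\in Z} x_i = \left(\prod_{i \in Z} x_i\right)$ divides a multiple of $x_0 \cdots x_n$ — precisely, $\prod_{i \in Z} x_i$ lies in $\mathbb{I}(V):\langle x_0 \cdots x_n\rangle$ would need $\left(\prod_{i\in Z} x_i\right) x_0\cdots x_n \in \mathbb{I}(V)$, which holds since $\mathbb{I}(V)$ is an ideal containing $\prod_{i\in Z}x_i$. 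Hence $\prod_{i\in Z} x_i \in \mathbb{I}(V):\langle x_0\cdots x_n\rangle = \mathbb{I}(V)$ — circular. The genuinely correct move: repeatedly use the hypothesis. Since $\mathbb{I}(V):\langle x_0 \cdots x_n \rangle = \mathbb{I}(V)$ and, iterating, $\mathbb{I}(V) : \langle x_0 \cdots x_n\rangle^k = \mathbb{I}(V)$ for all $k$; now for $k$ large, $\left(\prod_{i \in Z} x_i\right) \mid (x_0 \cdots x_n)^k$, so $(x_0 \cdots x_n)^k \in \mathbb{I}(V)$, hence $1 \in \mathbb{I}(V) : \langle x_0\cdots x_n\rangle^k = \mathbb{I}(V)$, forcing $\mathbb{I}(V) = R$ and $V = \emptyset$, contradicting $V$ nonempty.

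\textbf{Main obstacle.} The conceptual steps are short; the care lies in (i) correctly using Theorem~\ref{generatingset} to transfer the monomial membership from $\mathbb{I}(p\star V)$ back to $\mathbb{I}(V)$ — the point being that the Hadamard transformation fixes monomials up to a nonzero scalar, so a monomial is in $\mathbb{I}(V)$ iff it is in $\mathbb{I}(p\star V)$ — and (ii) correctly deploying the colon-ideal hypothesis, which is really a statement that $\mathbb{I}(V)$ is saturated with respect to $x_0 \cdots x_n$; the cleanest phrasing is that $\prod_{i\in Z}x_i \in \mathbb{I}(V)$ implies $(x_0\cdots x_n)^k \in \mathbb{I}(V)$ for suitable $k$, and saturation then yields $1 \in \mathbb{I}(V)$, contradicting $V \neq \emptyset$. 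I expect writing step (ii) crisply to be the only real subtlety.
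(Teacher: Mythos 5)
Your proof is correct and follows the same overall strategy as the paper's: assuming $q$ has a zero coordinate forces a monomial into $\mathbb{I}(q\star V)=\mathbb{I}(p\star V)$, this membership is transferred back to $\mathbb{I}(V)$, and the saturation hypothesis $\mathbb{I}(V):\langle x_0\cdots x_n\rangle=\mathbb{I}(V)$ then yields $1\in\mathbb{I}(V)$, contradicting $V\neq\emptyset$. The only real difference is the transfer step — the paper argues pointwise (if some $r\in V$ had $r_0\neq 0$ then $p\star r$ would be a point of $p\star V$ outside $\mathbb{V}(x_0)$), whereas you invoke Theorem~\ref{generatingset} together with the observation that the Hadamard transformation by a point with no zero coordinates is an invertible rescaling of the variables fixing every monomial up to a nonzero scalar; both are valid, and your closing step could be streamlined by noting that $k=1$ already suffices, since the squarefree monomial $\prod_{i\in Z}x_i$ divides $x_0\cdots x_n$.
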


\begin{proof}
Suppose towards a contradiction that there is a point
$q = [q_0:\cdots:q_n] \in \psi(V,p)$ with at least one zero coordinate. After relabelling, we can assume without loss of generality that $q_0 =0$.
Recall that $q \star V$ is the closure of the set
$$S = \{q \star r ~|~ r \in V, ~~\mbox{$q \star r$ is defined}\} \subseteq \mathbb{P}^n.$$
Since the first coordinate of $q$ is $0$, every point in 
$S$ has $0$ as its first coordinate.  Consequently,
$x_0$ vanishes at all the points of $S$, so $x_0 \in \mathbb{I}(S)$.
Since $q \star V$ is the closure of $S$, we know $\mathbb{V}(\mathbb{I}(S)) = q \star V$. On the other hand, since $x_0 \in \mathbb{I}(S)$, we conclude that $q \star V \subseteq
\mathbb{V}(x_0)$.  But $q \star V  = p \star V$ by the assumption on $q$, which means 
that $p \star V \subseteq \mathbb{V}(x_0)$ also holds. We claim that this implies that 
every point $r = [r_0:\cdots:r_n] \in V$ has $r_0 =0$.  Indeed,
if some point $r \in V$ has $r_0 \neq 0$, then (since $p$ has no zero coordinates, so $p_0 r_0 \neq 0$) we know
$p \star  r$ is defined. By definition of Hadamard products, $p \star r \in p \star V \subseteq \mathbb{V}(x_0)$. However, we have just seen that 
$p \star r$ satisfies $p_0r_0 \neq 0$, so it is not contained in $\mathbb{V}(x_0)$, giving a contradiction.  Hence $r_0=0$.

We have just seen above that $V \subseteq \mathbb{V}(x_0)$. 
This implies that 
$x_0 \in \mathbb{I}(V)$.  Then any multiple of $x_0$ is in $\mathbb{I}(V)$, so in particular $x_0\cdot(x_1\cdots x_n) = 1 \cdot (x_0 x_1 \cdots x_n) \in \mathbb{I}(V)$.  Then by definition of ideal quotients, we have 
$1 \in \mathbb{I}(V):\langle x_0x_1\cdots x_n \rangle$. By the hypothesis on $\mathbb{I}(V)$ we therefore have $1 \in \mathbb{I}(V)$. This implies $V = \mathbb{V}(\mathbb{I}(V)) = \emptyset$, contradicting the hypothesis that $V$ is nonempty.    Thus we conclude that if $q \in \psi(V,p)$, then $q$ cannot have any zero coordinates, as we needed to show. 
\end{proof}

The hypothesis $\mathbb{I}(V):\langle x_0x_1 \cdots x_n\rangle = \mathbb{I}(V)$ played a role in the lemma above.  The next lemma analyzes this situation in more detail.

\begin{lemma}\label{nomonomials}
Let $V \subseteq \mathbb{P}^n$ be a nonempty
projective variety.  If $\mathbb{I}(V):\langle x_0x_1\cdots x_n \rangle
= \mathbb{I}(V)$, then $\mathbb{I}(V)$ contains no monomials.
\end{lemma}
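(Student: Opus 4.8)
The plan is to prove the contrapositive: I will assume that $\mathbb{I}(V)$ contains a monomial and deduce that $\mathbb{I}(V):\langle x_0x_1\cdots x_n\rangle \neq \mathbb{I}(V)$. So suppose $X^\gamma = x_0^{\gamma_0}\cdots x_n^{\gamma_n} \in \mathbb{I}(V)$ for some $\gamma \in \mathbb{N}^{n+1}$. Since $V$ is nonempty, $\mathbb{I}(V) \neq R$, so $X^\gamma$ is not a unit, meaning $\gamma \neq 0$ but also $X^\gamma$ is a proper monomial; in particular at least one exponent $\gamma_i$ is positive, and not all exponents can force $1 \in \mathbb{I}(V)$.

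Next I would observe that from $X^\gamma \in \mathbb{I}(V)$ we can extract the squarefree radical monomial: letting $T = \{i : \gamma_i > 0\}$, some power of $m := \prod_{i \in T} x_i$ lies in $\mathbb{I}(V)$ (indeed $m^{\deg X^\gamma}$ is divisible by $X^\gamma$, hence $m^{\deg X^\gamma} \in \mathbb{I}(V)$), and since $\mathbb{I}(V)$ is radical (it is the ideal of a variety), in fact $m = \prod_{i \in T} x_i \in \mathbb{I}(V)$. Now $T \neq \{0,1,\ldots,n\}$, because otherwise $x_0x_1\cdots x_n \in \mathbb{I}(V)$, which would give $1 \in \mathbb{I}(V):\langle x_0\cdots x_n\rangle$; combined with the hypothesis $\mathbb{I}(V):\langle x_0\cdots x_n\rangle = \mathbb{I}(V)$ this yields $1 \in \mathbb{I}(V)$, contradicting $V \neq \emptyset$. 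So there is an index $j \notin T$.

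Then I would form the witness element showing the colon ideal is strictly larger. Consider the monomial $m' := \prod_{i \in T} x_i$ divided by... no — rather, take any single variable $x_\ell$ with $\ell \in T$ and set $u := m / x_\ell = \prod_{i \in T, i \neq \ell} x_i$. I claim $u \cdot (x_0x_1\cdots x_n)$ is divisible by $m$: indeed $x_0 x_1\cdots x_n$ already contains the factor $x_\ell$, so $u \cdot x_0\cdots x_n$ is a multiple of $m$, hence lies in $\mathbb{I}(V)$. Therefore $u \in \mathbb{I}(V):\langle x_0x_1\cdots x_n\rangle$. On the other hand $u \notin \mathbb{I}(V)$: if $u$ were in $\mathbb{I}(V)$, then since $V$ is nonempty there is a point of $V$ at which $u$ vanishes for a reason — actually the cleanest argument is to keep descending. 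More carefully, among all squarefree monomials of $\mathbb{I}(V)$ pick one, $m = \prod_{i \in T} x_i$, of minimal support size $|T|$; then for $\ell \in T$, $u = m/x_\ell$ has strictly smaller support and hence $u \notin \mathbb{I}(V)$ by minimality, while $u \cdot x_0\cdots x_n \in \mathbb{I}(V)$ as shown. This gives $u \in (\mathbb{I}(V):\langle x_0\cdots x_n\rangle) \setminus \mathbb{I}(V)$, so $\mathbb{I}(V):\langle x_0\cdots x_n\rangle \neq \mathbb{I}(V)$, completing the contrapositive.

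The main subtlety — really the only place requiring care — is the extraction of a squarefree monomial from an arbitrary monomial in $\mathbb{I}(V)$, which uses that $\mathbb{I}(V)$ is radical (stated in the Background section), and then the minimality argument to guarantee $u \notin \mathbb{I}(V)$; one must also handle the degenerate possibility that the minimal squarefree monomial is a single variable $x_\ell$, in which case $u = 1$, and $1 \in \mathbb{I}(V):\langle x_0\cdots x_n\rangle$ forces $1 \in \mathbb{I}(V)$ by the hypothesis, contradicting $V \neq \emptyset$ — so this degenerate case is fine too. Everything else is a routine divisibility check.
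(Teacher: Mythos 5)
Your proof is correct, but it takes a genuinely different route from the paper's. The paper does not use radicality at all: starting from a monomial $x_0^{a_0}\cdots x_n^{a_n}\in\mathbb{I}(V)$ it sets $t=\max_i a_i$, observes $(x_0\cdots x_n)^t\in\mathbb{I}(V)$, and then applies the colon hypothesis $t$ times to descend $(x_0\cdots x_n)^{t-1},(x_0\cdots x_n)^{t-2},\ldots$ all the way to $1\in\mathbb{I}(V)$, contradicting $V\neq\emptyset$. You instead invoke the radicality of $\mathbb{I}(V)$ (which the Background section does license) to pass to a squarefree monomial, choose one of minimal support, and exhibit an explicit witness $u=m/x_\ell$ lying in $\mathbb{I}(V):\langle x_0\cdots x_n\rangle$ but not in $\mathbb{I}(V)$. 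Your approach buys a constructive certificate that the colon ideal is strictly larger and uses the hypothesis at most once; the paper's iterated descent buys a slightly more general statement, valid for any proper ideal $I$ with $I:\langle x_0\cdots x_n\rangle = I$, radical or not. One small stylistic point: you announce a contrapositive but in the degenerate cases ($T=\{0,\ldots,n\}$ and $u=1$) you slip back into invoking ``the hypothesis'' as in a proof by contradiction; in pure contrapositive form you should just note that in those cases $1$ itself is the witness element of $\bigl(\mathbb{I}(V):\langle x_0\cdots x_n\rangle\bigr)\setminus\mathbb{I}(V)$, since $1\notin\mathbb{I}(V)$ follows from $V\neq\emptyset$. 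This is cosmetic; the mathematics is sound.
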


\begin{proof}
Suppose towards a contradiction that $\mathbb{I}(V)$ contains a monomial, say
$x_0^{a_0}\cdots x_n^{a_n}$,  with $a_i \geq 0$.  Let $t
:= \max\{a_0,\ldots,a_n\}$, so that $t-a_i \geq 0$ for all $i$. Then, since $x_0^{a_0} \cdots x_n^{a_n}$ is in $\mathbb{I}(V)$, we have that any multiple is also in $\mathbb{I}(V)$, which in turn implies 
$$(x_0x_1\cdots x_n)^t = x_0^{a_0}x_0^{t-a_0}x_1^{a_1}x_1^{t-a_1}\cdots x_n^{a_n}x_n^{t-a_n} = x_0^{t-a_0} x_1^{t-a_1} \cdots x_n^{t-a_n}(x_0^{a_0} \cdots x_n^{a_n}) \in \mathbb{I}(V).$$
By definition of ideal quotients, this means that $(x_0x_1\cdots x_n)^{t-1} \in \mathbb{I}(V):\langle x_0x_1\cdots x_n
\rangle$. By assumption, we have $\mathbb{I}(V): \langle x_0x_1 \cdots x_n \rangle = \mathbb{I}(V)$, so $(x_0x_1\cdots x_n)^{t-1} \in\mathbb{I}(V)$. But then by the same argument,  we may conclude 
$(x_0x_1\cdots x_n)^{t-2} \in\mathbb{I}(V)$, and so on. We can eventually conclude that $1 \in \mathbb{I}(V)$, so $V = \emptyset$ as in the proof of the previous lemma, contradicting the fact that $V$ is
nonempty.  Thus, $\mathbb{I}(V)$ contains no monomials, as desired. 
\end{proof}

We now come to the main result of this section,
which gives a concrete description of the closure of $\psi(V,p)$ under
some hypotheses on the variety $V$.  In particular, under these hypotheses on $V$, the defining ideal of $\overline{\psi(V,p)}$ turns out to be a binomial ideal. 

\begin{theorem}\label{fiberofmap}
Let $V \subseteq \mathbb{P}^n$ be a
nonempty projective variety.  Suppose that 
$\mathbb{I}(V):\langle x_0x_1 \cdots x_n \rangle = \mathbb{I}(V)$. 
Let $p = [p_0:\cdots:p_n] \in \mathbb{P}^n$ be
a point with $p_0\cdots p_n \neq 0$. 
Let $<$ be a monomial order on $k[x_0,\ldots,x_n]$ and let $\mathcal{G} = \{ f_1, \dots, f_m \}$ denote a reduced Gr\"obner basis for $\mathbb{I}(V)$ with respect to $<$. Assume that every element of the Gr\"obner basis is of degree $\geq 2$. 
For each $i =1,\ldots,m$, write  
$$f_i = X^{\alpha_{1,i}} - a_{2,i}X^{\alpha_{2,i}} - \dots - a_{k_i,i}X^{\alpha_{k_i,i}}
~~\mbox{where $LT(f_i) = X^{\alpha_{1,i}}$}$$
where we assume $X^{\alpha_{k_i,i}} < \cdots < X^{\alpha_{1,i}}$ with respect to $<$ so that $X^{\alpha_{1,i}}$ is the leading term of $f_i$. 
Let
$$J := \langle b_{1,i}X^{\alpha_{1,i}} - b_{\ell,i}X^{\alpha_{\ell,i}} ~|~ 
\mbox{for each $i=1,\ldots,m$ and $1<\ell \leq k_i$} \rangle$$
where the constants $b_{j,i}$ are chosen so 
that they satisfy the equations
$b_{1,i}p^{\alpha_{1,i}} = b_{\ell,i}p^{\alpha_{\ell,i}}$ for all $1 \leq i\leq m$.  
If $\mathbb{I}(\mathbb{V}(J)):\langle x_0x_1\cdots x_n\rangle = J$, 
then $J=\sqrt{J} = \mathbb{I}(\psi(V,p))$.  In particular, under the hypotheses above, the ideal $\mathbb{I}(\psi(V,p))$ is a binomial ideal, and a set of generators of $\mathbb{I}(\psi(V,p))$ can be computed via a reduced Gr\"obner basis of $\mathbb{I}(V)$. 
\end{theorem}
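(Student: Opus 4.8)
The plan is to show the two containments $\mathbb{I}(\psi(V,p)) \subseteq J$ and $J \subseteq \mathbb{I}(\psi(V,p))$ after first observing that $J$ is automatically radical under our hypotheses, since $J = \mathbb{I}(\mathbb{V}(J))$ would follow once we know $J = \mathbb{I}(\psi(V,p))$, and $\mathbb{I}$ of anything is radical; alternatively the hypothesis $\mathbb{I}(\mathbb{V}(J)):\langle x_0\cdots x_n\rangle = J$ together with $J \subseteq \mathbb{I}(\mathbb{V}(J))$ and a monomial-free argument (Lemma~\ref{nomonomials} applied to $\mathbb{V}(J)$) will let me conclude $J = \mathbb{I}(\mathbb{V}(J)) = \sqrt J$. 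So the heart of the matter is the set-theoretic identification of $\mathbb{V}(J)$ with the closure of $\psi(V,p)$.

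First I would analyze $\psi(V,p)$ pointwise. By Lemma~\ref{nonzeropoints}, every $q \in \psi(V,p)$ has no zero coordinates, so $q \star V$ is genuinely the Hadamard transformation and $\mathbb{I}(q \star V) = \langle f_1^{\star q}, \dots, f_m^{\star q}\rangle$ by Theorem~\ref{generatingset}; moreover by Theorem~\ref{reducedgb} the rescaled polynomials $q^{\alpha_{1,i}} f_i^{\star q}$ form the reduced Gröbner basis of $\mathbb{I}(q \star V)$ with the same leading terms $X^{\alpha_{1,i}}$. The same holds for $p$. Now $q \in \psi(V,p)$ means $q \star V = p \star V$, which since both ideals are radical is equivalent to $\mathbb{I}(q \star V) = \mathbb{I}(p \star V)$; by uniqueness of reduced Gröbner bases for the fixed order $<$, this holds if and only if $q^{\alpha_{1,i}} f_i^{\star q} = p^{\alpha_{1,i}} f_i^{\star p}$ for every $i$. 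Writing out the coefficient of $X^{\alpha_{\ell,i}}$ on both sides, this is exactly the system of equations $\dfrac{q^{\alpha_{1,i}}}{q^{\alpha_{\ell,i}}} = \dfrac{p^{\alpha_{1,i}}}{p^{\alpha_{\ell,i}}}$ for all $i$ and all $1 < \ell \le k_i$, i.e. $p^{\alpha_{\ell,i}} q^{\alpha_{1,i}} = p^{\alpha_{1,i}} q^{\alpha_{\ell,i}}$. Choosing the constants $b_{j,i}$ so that $b_{1,i} p^{\alpha_{1,i}} = b_{\ell,i} p^{\alpha_{\ell,i}}$, these are precisely the conditions $b_{1,i} q^{\alpha_{1,i}} = b_{\ell,i} q^{\alpha_{\ell,i}}$, which say $q$ vanishes on each generator of $J$. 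Hence a point $q$ with no zero coordinates lies in $\psi(V,p)$ if and only if $q \in \mathbb{V}(J)$.

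This shows $\psi(V,p) = \mathbb{V}(J) \setminus \mathbb{V}(x_0 x_1 \cdots x_n)$, so $\psi(V,p) = \mathbb{V}(J) \cap (\mathbb{P}^n \setminus \mathbb{V}(x_0\cdots x_n))$. To pass to ideals I take closures: $\overline{\psi(V,p)}$ is the smallest variety containing $\psi(V,p)$, and standard ideal-quotient theory gives $\mathbb{I}(\overline{\psi(V,p)}) = \mathbb{I}(\mathbb{V}(J)) : \langle x_0 x_1 \cdots x_n\rangle^\infty$, which in fact equals $\mathbb{I}(\mathbb{V}(J)):\langle x_0\cdots x_n\rangle$ here because the hypothesis $\mathbb{I}(\mathbb{V}(J)):\langle x_0\cdots x_n\rangle = J$ makes the chain of saturations stabilize immediately (applying the hypothesis once more shows $J:\langle x_0\cdots x_n\rangle = J$, so higher saturations give nothing new). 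Therefore $\mathbb{I}(\psi(V,p)) = \mathbb{I}(\overline{\psi(V,p)}) = \mathbb{I}(\mathbb{V}(J)):\langle x_0\cdots x_n\rangle = J$, and since $\mathbb{I}$ of a set is always radical we get $J = \sqrt J = \mathbb{I}(\psi(V,p))$. The final sentence of the theorem is then immediate: $J$ is generated by the displayed binomials, and those binomials are read off directly from the reduced Gröbner basis $\mathcal G$ of $\mathbb{I}(V)$ together with the choice of $b_{j,i}$.

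The step I expect to be the main obstacle is the bookkeeping around degenerate points and the exact relationship between set-theoretic equality of $\psi(V,p)$ with a localization of $\mathbb{V}(J)$ and the ideal-theoretic conclusion $\mathbb{I}(\psi(V,p)) = J$; in particular one must be careful that $\mathbb{V}(J)$ could a priori contain extra components inside $\mathbb{V}(x_0\cdots x_n)$ that are not in the closure of $\psi(V,p)$, and it is precisely the hypothesis $\mathbb{I}(\mathbb{V}(J)):\langle x_0\cdots x_n\rangle = J$ that rules this out — verifying that this hypothesis does exactly the needed job (rather than merely the weaker saturation-to-infinity statement) is the delicate point. The hypothesis that every Gröbner basis element has degree $\ge 2$ is used to guarantee that the rescaling $q^{\alpha_{1,i}} f_i^{\star q}$ genuinely involves the coordinates in a nondegenerate way and that $J$ is generated by honest binomials (two-term polynomials) rather than monomials; I would flag where exactly that degree hypothesis enters, namely in ensuring $\alpha_{1,i} \ne \alpha_{\ell,i}$ with both nonzero so no generator of $J$ collapses to a monomial.
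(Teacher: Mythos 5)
Your proposal is correct and follows essentially the same route as the paper: both directions hinge on Lemma~\ref{nonzeropoints}, on Theorem~\ref{reducedgb} together with uniqueness of reduced Gr\"obner bases to translate $q\star V = p\star V$ into the binomial conditions defining $J$, and on the identity $\mathbb{I}(X\setminus Y)=\mathbb{I}(X):\mathbb{I}(Y)$ combined with the saturation hypothesis on $J$. The only (inessential) discrepancy is your account of where the degree-$\geq 2$ hypothesis enters: the paper uses it to rule out $J=\langle x_0,\ldots,x_n\rangle$ in its argument that $\mathbb{V}(J)$ meets the torus, while the fact that the generators of $J$ are genuine binomials comes from Lemma~\ref{nomonomials} and the hypothesis $\mathbb{I}(V):\langle x_0\cdots x_n\rangle=\mathbb{I}(V)$.
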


\begin{proof}
We begin by noting that, by the hypothesis on $\mathbb{I}(V)$ and by Lemma 
\ref{nomonomials}, none of the generators $f_i$ in the reduced Gr\"obner basis can be a monomial.  In particular, each $f_i$ has two or more terms,
i.e., $k_i \geq 2$ for all $i$. By the definition of the generators of $J$, we conclude that the generators of $J$ are binomials.  Also,
since the elements of $\mathcal{G}$ are 
a reduced Gr\"obner basis, the leading
coefficient of each $f_i$ is $1$, as 
given in the statement.

Next we claim that there exists a point $q = [q_0:\cdots:q_n] \in\mathbb{V}(J)$ such that $q_0\cdots q_n \neq 0$. To see this, suppose for a contradiction that no such $q$ exists. This implies that for any $q \in \mathbb{V}(J)$, at least one of its coordinates vanishes, i.e., $\mathbb{V}(J) \subseteq \mathbb{V}(\langle x_0 x_1 \cdots x_n \rangle)$. This in turn means that $\mathbb{V}(J) \setminus \mathbb{V}(\langle x_0 x_1 \cdots x_n \rangle) = \emptyset$, and thus 
$$
\mathbb{I}(\mathbb{V}(J) \setminus \mathbb{V}(\langle x_0 x_1 \cdots x_n \rangle)) = \mathbb{I}(\emptyset) = \langle x_0, x_1, \ldots, x_n\rangle
$$ 
since the vanishing ideal of the empty set in $\mathbb{P}^n$ is the irrelevant ideal $\langle x_0, \ldots, x_n\rangle$. For any projective varieties $X$ and $Y$ we have $\mathbb{I}(X \setminus Y) = \mathbb{I}(X):\mathbb{I}(Y)$, so applying that in this case we obtain 
$$
\mathbb{I}(\mathbb{V}(J)):\mathbb{I}(\mathbb{V}(\langle x_0 x_1 \cdots x_n \rangle)) = \mathbb{I}(\mathbb{V}(J)):\langle x_0 x_1 \cdots x_n \rangle = \langle x_0, x_1, \ldots, x_n \rangle
$$ 
where we have used that $\langle x_0 x_1 \cdots x_n \rangle$ is a radical ideal to conclude that $\mathbb{I}(\mathbb{V}(x_0x_1\cdots x_n)) = \langle x_0 \cdots x_n\rangle$.
On the other hand, by hypothesis, the ideal quotient $\mathbb{I}(\mathbb{V}(J)):\langle x_0 x_1 \cdots x_n \rangle$ is equal to $J$, so we conclude $$
J = \langle x_0, x_1, \ldots, x_n \rangle
$$
so in particular it is generated in degree $1$. Note that since $\mathbb{I}(V)$ is a homogeneous ideal, the elements of its reduced Gr\"obner basis $\{f_1, \ldots, f_m\}$ (in the hypothesis of the theorem) are each homogeneous. Moreover, we have assumed that each generator satisfies $\deg(f_i) \geq 2$, which implies that the ideal $J$ is also generated in degree $2$ by its construction. Hence $J$ cannot equal $\langle x_0,\ldots,x_n\rangle$, so we achieve a contradiction. Hence there exists a point $q \in \mathbb{V}(J)$ with $q_0\cdots q_n \neq 0$.

Now let $q$ be such a point, as above.  We know that $q$ satisfies all the relations \begin{equation}\label{eq: relations of q}
b_{1,i}q^{\alpha_{1,i}} = b_{\ell,i}q^{\alpha_{\ell,i}}
~~\mbox{for each $i=1,\ldots,m$ and $1 < \ell \leq k_i$}.
\end{equation}
Since none of the coordinates of $q$ vanish, each monomial $q^{\alpha_{\ell,i}}$ for $1 \leq i \leq m$ and $1 \leq \ell \leq k_i$ appearing in the above binomials is not equal to $0$. Using this we may compute 
the Hadamard transformation of $f_i$ by $q$ as 
\begin{equation}\label{eq: fi star q}
f_i^{\star q} = \frac{1}{{q^{\alpha_{1,i}}}}X^{{\alpha_{1,i}}} - \frac{a_{2,i}}{q^{\alpha_{2,i}}}X^{\alpha_{2,i}} - \dots - \frac{a_{k_i,i}}{{q^{\alpha_{k_i,i}}}}X^{{\alpha_{k_i,i}}}.
\end{equation}
Using the relations~\eqref{eq: relations of q}, we can replace each $q^{\alpha_{\ell,i}}$ with $\frac{b_{1,i}}{b_{\ell,i}}q^{\alpha_{1,i}}$ and so we obtain
$$f_i^{\star q} = \frac{1}{{q^{\alpha_{1,i}}}}X^{{\alpha_{1,i}}} -
\frac{a_{2,i}b_{2,i}}{b_{1,i}q^{\alpha_{1,i}}} X^{\alpha_{2,i}} - \dots - \frac{a_{k_i,i}b_{k_i,i}}{b_{1,i}q^{\alpha_{1,i}}}X^{{\alpha_{k_i,i}}}.$$
 We clear denominators using $b_{1,i}q^{\alpha_{1,i}}$ to find that 
\begin{equation}\label{eq: indep of q} 
b_{1,i}q^{\alpha_{1,i}} \cdot f_i^{\star q}  = b_{1,i}X^{\alpha_{1,i}} - a_{2,i}b_{2,i}X^{\alpha_{2,i}} -\dots - a_{k_i,i}b_{k_i,i}X^{\alpha_{k_i,i}}.
\end{equation}
Observe that the RHS of~\eqref{eq: indep of q} does not depend on the coordinates of $q$.  Moreover, since $b_{1,i}q^{\alpha_1,i}$ is a nonzero constant, we can conclude that the vanishing of $f^{\star q}_i$ is equivalent to the vanishing of the RHS of~\eqref{eq: indep of q}. 
Recall that construction of the coefficients $b_{j,i}$ guarantee that the point $p$ satisfies the same relations~\eqref{eq: relations of q} (with $p$ replacing $q$). Thus, a similar argument as above shows that a nonzero constant multiple of $f_i^{\star p}$ is equal to the RHS of~\eqref{eq: indep of q}, and from this we conclude that $f_i^{\star q}$ is a constant multiple of $f_i^{\star p}$. The above argument did not depend on the index $i$, so this is true for all $i$.  Putting this together with \cite[Theorem 3.5]{BC} (see Theorem~\ref{generatingset}) this means that 
$$\mathbb{I}(q \star V) = \langle f_1^{\star q}, \ldots, f_m^{\star q} \rangle = \langle f_1^{\star p}, \ldots, f_m^{\star p} \rangle  
= \mathbb{I}(p \star V).$$
Thus $q \star V = p \star V$, and consequently,
$q \in \psi(V,p)$ by definition of $\psi(V,p)$.

We are now ready to prove, under our hypotheses, that $\mathbb{I}(\psi(V,p))=J$, which is the main claim of the theorem. 
First, we show that $\mathbb{I}(\psi(V,p)) \subseteq J$. 
We have shown above that for any point $q \in \mathbb{V}(J)$ with $q_0 \cdots q_n \neq 0$, we know that $q \in \psi(V,p)$. In other words, we have shown 
$$ 
\mathbb{V}(J) \setminus \mathbb{V}(x_0x_1\cdots x_n) \subseteq \psi(V,p).
$$
Translating this to ideals, we therefore conclude that 
\begin{equation}\label{eq: 1} \mathbb{I}(\psi(V,p)) \subseteq \mathbb{I}(\mathbb{V}(J) \setminus \mathbb{V}(x_0x_1\cdots x_n)).
\end{equation}
We have already seen above that the RHS of the above inclusion can be described as an ideal quotient, i.e. 
\begin{equation}\label{eq: 2}
\mathbb{I}(\mathbb{V}(J) \setminus \mathbb{V}(x_0x_1\cdots x_n))
= \mathbb{I}(\mathbb{V}(J)):\mathbb{I}(\mathbb{V}(x_0x_1\cdots x_n))
= \mathbb{I}(\mathbb{V}(J)):\langle x_0x_1\cdots x_n \rangle
\end{equation}
where we have used that $\langle x_0 x_1 \cdots x_n \rangle$ is a radical ideal to conclude that $\mathbb{I}(\mathbb{V}(x_0x_1\cdots x_n)) = \langle x_0 \cdots x_n\rangle$. 
By our hypotheses, the last (rightmost) ideal appearing above is equal to $J$. Putting this together with ~\eqref{eq: 1} and~\eqref{eq: 2} gives the desired containment.

Next we show the reverse inclusion $J \subseteq \mathbb{I}(\psi(V,p))$. To see this, suppose $q = [q_0:\cdots:q_n] \in \psi(V,p)$.  By Lemma \ref{nonzeropoints}, we know $q \notin
\mathbb{V}(x_0x_1\cdots x_n)$.  Since $q \star V =
p \star V$, we have $\mathbb{I}(q \star V) = 
\mathbb{I}(p \star V)$.  By assumption, the $\{f_1,\ldots,f_m\}$ form a reduced Gr\"obner basis for $\mathbb{I}(V)$. Hence by Theorem~\ref{reducedgb},
the sets $\{q^{\alpha_{1,1}}f_1^{\star q},\ldots,q^{\alpha_{1,m}}f_m^{\star q}\}$
and $\{p^{\alpha_{1,1}}f_1^{\star p},\ldots,
p^{\alpha_{1,m}}f_m^{\star p}\}$ both form
reduced Gr\"obner bases of $\mathbb{I}(q \star V)=\mathbb{I}(p \star V)$  with
respect to $<$.  Since reduced Gr\"obner bases are unique, and since $LT(q^{\alpha_{1,i}}f_i^{\star q})
= LT(p^{\alpha_{1,i}}f_i^{\star p})$ for
$i = 1,\ldots,m$, we conclude
$q^{\alpha_{1,i}}f_i^{\star q} = p^{\alpha_{1,i}}f_i^{\star p}$ for $i=1,\ldots,m$.

 Using the explicit formula for $f_i^{\star q}$  in~\eqref{eq: fi star q}, (and the similar one for $f_i^{\star p}$) and 
 by comparing the coefficients
 of both sides of $q^{\alpha_{1,i}}f_i^{\star q} = p^{\alpha_{1,i}}f_i^{\star p}$, 
 we have 
 $$
\frac{q^{\alpha_{1,i}}}{q^{\alpha_{\ell,i}}}
= \frac{p^{\alpha_{1,i}}}{p^{\alpha_{\ell,i}}}
 ~~\mbox{
for $1\leq i \leq m$ and $1 < \ell \leq k_i$}.$$
But then because $b_{1,i}p^{\alpha_{1,i}} = b_{\ell,i}p^{\alpha_{\ell,i}}$ for all $1 \leq i\leq m$ and $1 < \ell \leq k_i$,
we have
\begin{eqnarray*}
b_{1,i}q^{\alpha_{1,i}} & = &
b_{1,i}
\left(\frac{q^{\alpha_{\ell,i}}p^{\alpha_{1,i}}}{p^{\alpha_{\ell,i}}} \right) 
= \left(\frac{q^{\alpha_{\ell,i}}(b_{1,i}p^{\alpha_{1,i}})}{p^{\alpha_{\ell,i}}} \right)  
=
\left(\frac{q^{\alpha_{\ell,i}}(b_{1,\ell}p^{\alpha_{\ell,i}})}{p^{\alpha_{\ell,i}}}\right)
= b_{1,\ell}q^{\alpha_{\ell,i}}
\end{eqnarray*}
for all $1 \leq i\leq m$ and $1 < \ell \leq k_i$.
In other words, the point $q$ lies in the vanishing locus of the generators 
of $J$. Thus $q \in \mathbb{V}(J)$. The argument just given shows that $\psi(V,p) \subseteq \mathbb{V}(J)$, so we conclude 
$J \subseteq \mathbb{I}(\mathbb{V}(J)) \subseteq 
\mathbb{I}(\psi(V,p))$.
We have shown both inclusions, so we have shown $J = \mathbb{I}(\psi(V,p))$, as desired. 

Finally, since $J \subseteq \sqrt{J} =
\mathbb{I}(\mathbb{V}(J)))  \subseteq 
\mathbb{I}(\mathbb{V}(J))):\langle x_0x_1
\cdots x_n \rangle = J$, the conclusion 
that $\mathbb{I}(\psi(V,p)) = J = \sqrt{J}$ now follows.
\end{proof}

\begin{example}
Let $V = \mathbb{V}(x^2-xy-yz) \subseteq \mathbb{P}^3$, and 
$p = [1:2:3:4]$. Thus, $\mathbb{I}(V) = \langle x^2-xy-yz \rangle$, 
a principal ideal of $R = k[x,y,z,w]$.  Let $>$ be the lexicographical momomial order 
given by $x>y>z>w$. Since $\mathbb{I}(V)$ is principal, and the leading coefficient 
of $f = x^2-xy-yz$ is 1, we can conclude that $\mathcal{G} = \{f\}$ is a reduced 
Gr\"obner basis for $\mathbb{I}(V)$. 
Furthermore, one can verify by using 
{\it Macaulay2} that $\mathbb{I}(V):\langle xyzw \rangle = \mathbb{I}(V)$, so the above 
theorem applies.  As per Theorem \ref{fiberofmap}, the binomials which generate 
$\mathbb{I}(\psi(V,p))$ are of the form $g_1 = a_1x^2-b_1xy$ and $g_2 = a_2x^2-
b_2yz$. We solve for the coefficients by substituting $x=1$, $y=2$, $z=3$, and 
$w=4$. We have $2b_1 = a_1$ and $6b_2 = a_2$. Therefore, $\mathbb{I}(\psi(V,p)) = 
\langle x^2-(1/2)xy,x^2-(1/6)yz \rangle$.
\end{example}

\noindent
{\bf Acknowledgments.} 
We thank C. Bocci and E. Carlini for answering some of our questions and their suggestions.
Results of this paper were based upon
computer experiments using \emph{Macaulay2}
\cite{M2}, and in particular, the 
{\tt Hadamard} package of Bahmani Jafarloo
\cite{BJ}.
Da Silva was partially supported by an NSERC postdoctoral fellowship.
Harada's research is supported by NSERC Discovery Grant 2019-06567.
Rajchgot's research is supported
by NSERC Discovery Grant 2017-05732.
 Van Tuyl’s research is supported by NSERC Discovery Grant 2019-05412. 
%

\appendix

\section{Proof of Proposition \ref{leadingterm}}\label{appendix}

In this appendix, we include a detailed proof of Proposition \ref{leadingterm}.

\begin{proof}[Proof of Proposition \ref{leadingterm}]
Since $V$ and $W$ are binomial varieties that have the same exponents, we may suppose that 
$\mathbb{I}(V)$ is generated by
$$\mathcal{G} = \{ g_1 = a_1X^{\alpha_1} -b_1X^{\beta_1},
g_2 = a_2X^{\alpha_2} - b_2X^{\beta_2},\ldots, g_s = a_sX^{\alpha_s}-b_sX^{\beta_s} \}$$
and $I(W)$ is generated by
$$\mathcal{H} = \{ h_1 = c_1X^{\alpha_1} -d_1X^{\beta_1},
h_2 = c_2X^{\alpha_2} - d_2X^{\beta_2},\ldots, h_s = c_sX^{\alpha_s}-d_sX^{\beta_s} \}.$$  

By applying  
Buchberger's Algorithm
(see e.g. \cite[Chap. 2, Sec. 7, Theorem 2]{CLO})
to $\mathcal{G}$ (respectively $\mathcal{H}$), we obtain a Gr\"obner basis $\mathcal{G}' = \{g_1, \ldots, g_s, g_{s+1}, \ldots, g_t\} = \mathcal{G} \sqcup \{g_{s+1},\ldots, g_t\}$ (respectively $\mathcal{H}' = \{h_1, \ldots, h_s, h_{s+1}, \ldots, h_p\} = \mathcal{H} \sqcup \{h_{s+1}, \ldots, h_p\}$) of $\mathbb{I}(V)$ (respectively $\mathbb{I}(W)$). Here, the $g_{s+1},\ldots,g_t$ represent the additional elements which may be added to $\mathcal{G}$ by the Buchberger Algorithm in order to produce a Gr\"obner basis, and similarly for $h_{s+1}, \ldots, h_p$. Note that it is possible that $s=t$, in which case $\mathcal{G}$ is already a Gr\"obner basis, and similarly for $h_{s+1},\ldots,h_p$. As discussed before the statement of the proposition, we already know that all $g_i$ and $h_i$ are binomials by \cite[Proposition 1.1]{ES}. We now claim that, in addition, $\mathcal{G}'$ and $\mathcal{H}'$ have the same binomial exponents, i.e., $t=p$ and that the additional elements $g_i$ and $h_i$ are of the form 
\begin{equation}\label{eq: same g h exponents}
g_i = e_iX^{\gamma_i} - f_iX^{\delta_i} \, \textup{ and } \, h_i = l_iX^{\gamma_i}  - m_iX^{\delta_i}
\end{equation} 
for some exponent vectors $\gamma_i, \delta_i$, for all $s+1 \leq i \leq t=p$.

Before proving the above claim, we first show that if this claim is true, then the proposition holds. Indeed, if $g_i$ and $h_i$ share the same monomials for all $1 \leq i \leq t=p$, then we may immediately conclude that for any monomial order, we have $LM_<(g_i)=LM_<(h_i)$, i.e., the leading monomials are equal for all $i$, and hence the leading terms $LT_<(g_i)$ and $LT_<(h_i)$ differ at most by a nonzero scalar multiple. From this it follows that 
$$LT_<(I) = \langle LT_<(g_1),\ldots,LT_<(g_t) \rangle 
= \langle LT_<(h_1),\ldots,LT_<(h_t) \rangle = LT_<(J)$$
where we have used the assumption that $\{g_1,\ldots,g_t\}$ and $\{h_1, \ldots,h_t\}$ are Gr\"obner bases for $I$ and $J$ respectively with respect to $<$ in order to conclude that their leading terms generate the corresponding initial ideals. 

Thus it remains to show that the two Gr\"obner bases $\mathcal{G}'$ and $\mathcal{H}'$ have the same cardinality and also contain binomials
with the same exponents, as expressed in~\eqref{eq: same g h exponents}.  To see this, we need some preliminaries. Recall that the Buchberger Algorithm  \cite[Chap. 2, Sec. 3, Theorem 3]{CLO}) involves computing $S$-polynomials of a set of generators of an ideal, and then applying the division algorithm to the $S$-polynomial; if the remainder upon division is nonzero, then this remainder is added as a generator of the ideal. In our situation, we wish to apply Buchberger's Algorithm to both $\mathcal{G}$ and $\mathcal{H}$ and compare results. Since the generators $g_i$ and $h_i$ (of $\mathcal{G}$ and $\mathcal{H}$ respectively) are ordered, and there are precisely $s$ elements in each generating set, we may assume that we perform Buchberger's Algorithm on $\mathcal{G}$ and $\mathcal{H}$ in precisely the same order. By this we mean that when we compute (for $\mathcal{G}$) the $S$-polynomial of a pair $g_i$ and $g_j$, we may speak of the corresponding step (for $\mathcal{H}$) of computing the $S$-polynomial of the corresponding pair $h_i$ and $h_j$, and the different pairs of indices $(i,j)$ are fed into the two applications of the Buchberger Algorithm (one for $\mathcal{G}$ and one for $\mathcal{H}$) in the same order. With this understanding in place, the goal is now to show that: (1) when applying Buchberger's Algorithm to both $\mathcal{G}$ and $\mathcal{H}$, a step in the algorithm for $\mathcal{G}$ produces an additional element $g_\ell$ to be added to the generating set, if and only if, in the corresponding step for $\mathcal{H}$, the algorithm produces an additional element $h_\ell$ to be added to the generating set. Moreover, we also wish to show that: (2) such additional generators $g_\ell$ and $h_\ell$ must be of the form~\eqref{eq: same g h exponents}, i.e. they must share the same exponents. The statements (1) and (2) would suffice to show the claim.

We argue by induction. More specifically, it is sufficient to show that if we have any two ordered lists of binomials $\{g_1,\ldots,g_r\}$ and $\{h_1,\ldots,h_r\}$ of the same cardinality and sharing the same exponents, then for any two indices $i,j$, the Buchberger Algorithm applied to $g_i,g_j$ and $h_i,h_j$ would either: (i) produce an additional generator for the Gr\"obner basis for both $g_i,g_j$ and $h_i,h_j$, or (ii) not produce an additional generator for either pair. Moreover, we wish to show that in case (i) above, the additional generators, which we may call $g'$ and $h'$, share the same exponents. Applying this argument inductively starting with the base case of $\mathcal{G}$ and $\mathcal{H}$ then suffices to prove (1) and (2) in the above paragraph. 

For the inductive step, we may without loss of generality assume that the pairs under consideration are $g_1 = aX^{\alpha_1} - bX^{\beta_1}, g_2
= cX^{\alpha_2} - dX^{\beta_2}$, and $h_1 = eX^{\alpha_1} -fX^{\beta_1}, h_2 = qX^{\alpha_2} - rX^{\beta_2}$.  Without loss of generality, we may further assume
that $\alpha_i > \beta_i$ for $i=1,2$, with respect to the chosen monomial order $>$. We now compute $S$-polynomials and remainders for both pairs $\{g_1,g_2\}$ and $\{h_1,h_2\}$, following the standard Buchberger method; we will compare the results for the two pairs at every step.    Firstly, the $S$-polynomial of $g_1$ and
$g_2$ is by definition 
\begin{equation}\label{eq: S poly g}
\begin{split} 
S(g_1,g_2) &= \frac{{\rm lcm}(X^{\alpha_1},X^{\alpha_2})}{aX^{\alpha_1}}
(aX^{\alpha_1}-bX^{\beta_1}) - \frac{{\rm lcm}(X^{\alpha_1},X^{\alpha_2})}{cX^{\alpha_2}}
(cX^{\alpha_2}-dX^{\beta_2}) \\
& =  -\frac{b}{a}X^{\beta_1+\alpha'} +\frac{d}{c}X^{\beta_2+\alpha''}\\
\end{split}
\end{equation}
where we define $\alpha', \alpha''$ by $X^{\alpha'} = \frac{{\rm lcm}(X^{\alpha_1},X^{\alpha_2})}{X^{\alpha_1}}$ and
$X^{\alpha''} = \frac{{\rm lcm}(X^{\alpha_1},X^{\alpha_2})}{X^{\alpha_2}}$, and the coefficients appearing in the $S$-polynomial are well-defined since both $a$ and $c$ are nonzero by assumption. 
Similarly we have
\begin{equation}\label{eq: S poly h}
S(h_1,h_2) = -\frac{f}{e}X^{\beta_1+\alpha'} +\frac{r}{q}X^{\beta_2+\alpha''}
\end{equation}
where we know that the exponents $\alpha',\alpha''$ appearing in the computation for $h_1,h_2$ are identical to those appearing for $g_1,g_2$ since $g_i,h_i$ share the same exponents. 

We claim that the two $S$-polynomials in~\eqref{eq: S poly g} and~\eqref{eq: S poly h} are either both equal to $0$, or, they are both binomials and they share the same exponents. To see this, we consider two cases. Suppose first that $\beta_1 + \alpha' \neq \beta_2 + \alpha''$. In this case, both $S(g_1,g_2)$ and $S(h_1,h_2)$ are binomials, and they clearly share the same exponents. Secondly, suppose that $\beta_1 + \alpha' = \beta_2 + \alpha''$. In this case, if $\frac{b}{a} \neq \frac{d}{c}$ (respectively $\frac{f}{e} \neq \frac{r}{q}$), then $S(g_1,g_2)$ (respectively $S(h_1,h_2)$) is a monomial which is by construction contained in $I$ (respectively $J$), and hence there exists a monomial contained in $I$ (respectively a monomial in $J$). However, by hypothesis, we assumed that $V$ and $W$ are both binomial varieties containing a point with no nonzero coordinates. Thus by Lemma~\ref{no_monomial}, there cannot be any monomials in $I=\mathbb{I}(V)$ or in $J=\mathbb{I}(W)$. This implies that we must have $\frac{b}{a}=\frac{d}{c}$ and $\frac{f}{e} = \frac{r}{q}$, which in turn implies that $S(g_1,g_2)=0$ and $S(h_1,h_2)=0$. This proves our claim.

If both $S$-polynomials are $0$, then there are no additional steps for these pairs in the Buchberger Algorithm, so we are done. So suppose now that we are in the case $S(g_1,g_2),S(h_1,h_2) \neq 0$. We claim that when $S(g_1,g_2)$ and respectively $S(h_1,h_2)$ are divided by 
$\mathcal{G} = \{g_1,\ldots,g_r\}$  and respectively
$\mathcal{H} = \{h_1,\ldots,h_r\}$, the current updated generating set, then the remainders -- which we denote $\overline{g}^\mathcal{G}$ and respectively
$\overline{h}^\mathcal{H}$ -- are either both equal to zero, or, they are both 
binomials and they share the same exponents.  

To see this, it is useful to briefly recall the structure of the Division Algorithm procedure as explained in \cite[Chap. 2, Sec. 3, Theorem 3]{CLO}. To compute the remainder of a given polynomial $g$ by $\mathcal{G} = \{g_1, \ldots, g_r\}$ (and similarly for $h$ by $\mathcal{H}$), one starts with a pair $(g,r=0)$ and then applies one step of the Division Algorithm to this pair, thereby producing a new pair $(g',r')$. Here the first entry in any such pair can be thought of as the polynomial ``being divided by $\mathcal{G}$'' and the second entry keeps track of the remainder upon division. The updated pair $(g',r')$ is then fed into the next iteration of the same Division Algorithm, producing the next pair $(g'', r'')$, and so on. When the polynomial being divided becomes $0$, the Division Algorithm terminates, and the current value of the remainder when this occurs is the final remainder, which we denote $\overline{g}^{\mathcal{G}}$. 

With this overall structure in mind, we now proceed with the rest of the argument. Recall that we are assuming that $\mathcal{G}$ and $\mathcal{H}$ consist of binomials and have the same exponents, and so do $S(g_1,g_2)$ and $S(h_1,h_2)$. We first prove that, when applying a step of the Division Algorithm to a binomial $g$ with respect to a set $\mathcal{G}$ consisting also of binomials, then, the output polynomial $g'$ is either a binomial or a monomial. This can be seen by recounting the procedure for producing $g'$. As the base case, suppose $g=S(g_1,g_2) = aX^\alpha - b X^\beta$ and $r=0$ and we are at the initial step in the algorithm. There are two possible cases to consider. Case (1) is when there exists $g_i = a_i X^{\alpha_i} - b_i X^{\beta_i}$ such that $LT_<(g_i)$ divides $LT_<(g)$. Case (2) is when no such $g_i$ exists. In case (1), the algorithm replaces $g=S(g_1,g_2)$ with 
$$
g - \frac{LT_<(g)}{LT_<(g_i)} g_i = aX^{\alpha} - bX^{\beta} - \frac{LT_<(g)}{LT_<(g_i)}(
a_iX^{\alpha_i}-b_iX^{\beta_i}) = 
-bX^{\beta} + \frac{ab_i}{a_i}X^{\beta_i+\alpha'}$$
where $X^{\alpha'} = \frac{X^\alpha}{X^{\alpha_i}}.$
Since $h = S(h_1,h_2)= dX^{\alpha} - eX^{\beta}$ has the 
same exponents as $g$, and because $h_i =
d_iX^{\alpha_i} - e_iX^{\beta_i} \in \mathcal{H}$ also satisfies $LT_<(h_i)|LT_<(h)$, when we run the 
division algorithm with input $h=S(h_1,h_2)$ and $\mathcal{H}$, it
will replace $h$ with 
$$-eX^{\beta} + \frac{de_i}{d_i}X^{\beta_i+\alpha'}.$$

By an argument similar to that given above, by Lemma~\ref{no_monomial} we know that if $\beta=\beta_i + \alpha'$ then $g'=h'=0$ and the Division Algorithm terminates, returning as $\overline{g}^{\mathcal{G}}$ and $\overline{h}^{\mathcal{H}}$ the current value of the remainders which, at this initial step, is $r=0$. Thus in this case, the remainders are both $0$ as claimed. If $\beta \neq \beta_i + \alpha'$ then both $g'$ and $h'$ are binomials and they have the same exponents as claimed. Now suppose we are in case (2). Note this also means that no leading term of an element in $\mathcal{H}$ divides $h = cX^{\alpha}-dX^{\beta}$, since $h$ and $g$, and $\mathcal{G}$ and $\mathcal{H}$, respectively share the same exponents. In this setting, the algorithm for $g$ sets the value of the remainder $r$ to  $aX^{\alpha}$ and $g$ is replaced with $g' = -bX^{\beta}$. Similarly, the algorithm for $h$ sets the remainder $r$ to be $cX^{\alpha}$ and $h$ is replaced with
$h'= -dX^{\beta}$. Thus $g'$ and $h'$ in this case are both monomials sharing the same exponent. 

Next observe that the argument given above for $g=S(g_1,g_2)$ and $h=S(h_1,h_2)$ is valid for any input polynomials $g,h$ which are binomials that share the same exponents and the remainder terms $r$ are both equal to $0$. This also shows that as long as the Division Algorithm remains in Case (1), the output polynomials $g',h'$ will either be both binomials that have the same exponents, or, they are both equal to $0$. Moreover, if the algorithm terminates while only encountering case (1), then both remainders $\overline{g}^{\mathcal{G}}$ and $\overline{h}^{\mathcal{H}}$ are $0$. 

We now claim that when a step in the Division Algorithm is applied to a (nonzero) monomial input, so $g=aX^\alpha$, then for all remaining steps in the Division Algorithm, the output $g'$ is either also a monomial, or, equal to $0$. (Informally, we can think of this as: ``once a monomial, always a monomial -- until the algorithm ends''.) To see this, observe again that when the input polynomials are $g=-bX^\beta$ and $h=-dX^\beta$, nonzero monomials of the same exponent, we again have two cases: Case (1) is when there exists $g_i=a_i X^{\alpha_i} - b_i X^{\beta_i}$ such that $LT_<(g_i) \vert LT_<(g)$. As above, this occurs if and only if $LT_<(h_i) \vert LT_<(h)$. If there is such an element,
then $g$ is replaced by 
$$-bX^{\beta} - \frac{-bX^{\beta}}{a_iX^{\alpha_i}}(a_iX^{\alpha_i}
-b_iX^{\beta_i}) = -\frac{bb_i}{a_i}X^{\beta_i+\beta'}$$
where $\beta' = \frac{X^\beta}{X^{\alpha_i}}.$ Note that $g'$ is again monomial, as claimed. An analogous computation shows that $h$ is replaced by $-\frac{dd_i}{c_i}X^{\beta_i+\beta'}$, which is again monomial and has the same exponent as $g'$. In fact, more is true. We saw above that the input polynomial becomes a monomial when no $LT_<(g_i)$ (respectively $LT_<(h_i)$) divides $LT_<(g)$ (respectively $LT_<(h)$), and in this case, the remainder terms are updated to nonzero monomials. If in the next steps with inputs $g=aX^\alpha, h=bX^\alpha$, we are in Case (1), then the remainders remain unchanged, and hence are still monomials. 

Now we consider case (2), when there is no $g_i$ such that $LT_<(g_i) \vert LT(g)$ and hence similarly for $h_i$ and $h$. In such a case, the Division Algorithm dictates that the leading terms $LT_<(g)$ and $LT_<(h)$ be added to their respective remainder terms and we set the new polynomials to be $g' = g-LT_<(g)$ and $h'=h-LT_<(h)$. However, since $g$ and $h$ are monomials, we conclude $g=LT_<(g)$ and $h=LT_<(h)$ and so $g'=h'=0$. Thus the algorithm terminates at the first instance when the $g,h$ are in case (2). 

What we have now seen is that, if the Division Algorithm is applied to $S(g_1,g_2)$ and $S(h_1,h_2)$, then either both remainders $\overline{g}^{\mathcal{G}}$ and $\overline{h}^{\mathcal{H}}$ are equal to $0$ or, if Case (2) is ever encountered and the remainder term becomes nonzero, then the remainder term $r$ changes exactly twice: once when a binomial $g$ is changed to a monomial $g'$, and once more when a monomial input $g$ encounters case (2) -- and similarly for $h$. Note that this final remainder is indeed binomial and not $0$ (which could occur if there was a cancellation), because the construction of the Division Algorithm implies that the second monomial which gets added to the remainder term is strictly less (in the monomial order) than the first. Thus, the final remainder is a binomial as claimed. Moreover, in both situations, when $r$ changes by a monomial, we have seen that the exponents of the monomials appearing for both $g$ and $h$ are the same, thus implying that the two remainders $\overline{g}^{\mathcal{G}}$ and $\overline{h}^{\mathcal{H}}$ share the same exponents. This completes the proof. 
\end{proof}

\end{document}